\numberwithin{equation}{section}
\numberwithin{figure}{section}
\newtheorem{theorem}{Theorem}[section]
\newtheorem{corollary}[theorem]{Corollary}
\newtheorem{definition}[theorem]{Definition}
\newtheorem{example}[theorem]{Example}
\newtheorem{lemma}{Lemma}[section]
\newtheorem{proposition}[theorem]{Proposition}
\newtheorem{remark}[theorem]{Remark}
\newcommand{\funf}{f}
\newcommand{\fung}{g}
\newcommand{\gph}{\text{gph}\,}
\newcommand{\hilbertH}{ \mathcal{H}}
\newcommand{\hilbertG}{ \mathcal{G}}
\newcommand{\dist}{\text{dist}}
\newcommand{\rank}{\text{rank}\,}
\newcommand{\multifC}{{\mathbb C}}
\newcommand{\multifR}{{\mathcal  R}}
\newcommand{\setD}{\mathcal{D}}
\begin{document}

\title[On Lipschitz continuity of projections]{On Lipschitz continuity of projections onto polyhedral moving sets}

	\subjclass[2010]{47N10,49J52,49J53,49K40,90C31}

\keywords{Lipschitzness of projection, relaxed constant rank constraint qualification condition, Lipschitz-likeness, graphical subdifferential mapping}
\author{
	Ewa M. Bednarczuk$^1$
}

\author{
	Krzysztof E. Rutkowski$^2$ 
}
\thanks{$^1$ Systems Research Institute of the Polish Academy of Sciences, Warsaw University of Technology, Faculty of Mathematics and Information Science}

\thanks{$^2$ Cardinal Stefan Wyszy\'nski University, 
	Faculty of Mathematics and Natural Sciences. School of Exact Sciences,
	Warsaw, Poland, 
	Systems Research Institute of the Polish Academy of Sciences}

\begin{abstract}
In Hilbert space setting we prove local lipchitzness of projections onto parametric polyhedral sets represented as solutions to systems of inequalities and equations with parameters appearing both in  left-hand-sides and right-hand-sides of the constraints. In deriving main results we assume that data are locally Lipschitz functions of parameter and  the relaxed constant rank constraint qualification condition is satisfied.
\keywords{Lipschitzness of projection \and relaxed constant rank constraint qualification condition \and Lipschitz-likeness \and graphical subdifferential mapping}
\end{abstract}
\maketitle

\section{Introduction}

Continuity of metric projections of a given $\bar{v}$ onto moving subsets have  already been investigated in a number of  instances. In the framework of Hilbert spaces, the projection $P_{C}(\bar{v})$ of $\bar{v}$ onto closed convex sets $C,C^\prime$, i.e.,  solutions to optimization problems
\begin{equation*}
	\tag{Proj} \text{minimize}\ \|z-\bar{v}\|\ \ \text{subject to \ } z\in C,
\end{equation*}
are unique and H\"older continuous with the exponent 1/2 in the sense that  there exists a constant $\ell_{H}>0$ with
$$
\|P_{C}(\bar{v})-P_{C'}(\bar{v})\|\le \ell_{H}[d_\rho(C,C')]^{1/2},
$$
where $d_\rho(\cdot,\cdot)$ denotes the bounded Hausdorff distance  (see  \cite{quantitive_stablilit_of_variational_systems_II_Attouch_Wets} and also \cite[Example 1.2]{perurbations_approximations_sensitivity_Dontchev}).

In the case where the sets, on which we project a given $\bar{v}$, are solution sets to systems of equations and inequalities,
the problem $Proj$ is a special case of a general parametric problem
\begin{equation}\label{problem:par}
	\tag{Par}\begin{array}[t]{l}\text{minimize  }  \varphi_{0}(p,x)
		\text{   subject to  } \\ \varphi_{i}(p,x)=0\ \ i\in I_{1},\ \
		\varphi_{i}(p,x)\le 0\ \ i\in I_{2},
	\end{array}
\end{equation}
where $x\in \hilbertH$,  $p\in \setD\subset \hilbertG$, $\hilbertH$ - Hilbert space, $\hilbertG$ - metric space, $\varphi_i:\ \setD\times \hilbertH \rightarrow \mathbb{R}$, $i\in \{0\}\cup I_1\cup I_2$. There exist numerous results concerning continuity of solutions to  problem \eqref{problem:par} in finite dimensional settings, see e.g., \cite{Bonnans_Shapiro,Holder_behaviour_of_optimal_soultions_Minchenko_Sakolchik,MR3101098,directional_derivatives_of_the_solutions_o_a_paarametric_nonlinear_Ralph_Dempe} and the references therein. In a recent paper by Mordukhovich, Nghia \cite{full_lipschitzian_and_holderian_stability_Mordukhovich}, in the finite-dimensional setting, the h\"olderness and the lipschitzness 
of the local minimizers to problem $Par$ with $I_{1}=\emptyset$ are investigated for 
$C^{2}$ functions $\varphi_{i}$, $i\in  I_{2}$,  under Mangasarian-Fromowitz (MFCQ)
and constant rank (CRCQ) constraints qualifications.

Let $\hilbertH$ be a Hilbert space and let $\setD\subset \hilbertG$ be a nonempty set of a normed space $\hilbertG$, $p\in \setD$ and $v\in \hilbertH$. We consider the norm topology induced on $\setD$ by the topology of space $\hilbertG$, i.e., $U$ is an open set in $\setD$ if $U=\setD\cap U^\prime$, where $U^\prime$ is open in $\hilbertG$ (see e.g. \cite{MR1039321}).  

We consider the following parametric optimization problem
\begin{align}\label{M(v,p)}\tag{M($v$,$p$)}
	\begin{aligned}
		&\min_{x\in \hilbertH}\, \frac{1}{2}\|x-v\|^2,\\
		&\text{subject to}\ x\in C(p):=\left\{ x\in \hilbertH\ \bigg|\ 
		\begin{array}{ll}
			\langle x \ |\ g_i(p)\rangle = f_i(p), &i \in I_1,\\
			\langle x \ |\ g_i(p)\rangle \leq f_i(p), &i \in I_2
		\end{array}
		\right\},
	\end{aligned}
\end{align}
where $f_i:\ \setD\rightarrow \mathbb{R}$, $g_i:\ \setD\rightarrow \hilbertH$, $i\in  I_1\cup I_2\neq \emptyset$, $I_1=\emptyset\vee \{1,\dots,q\}$, $I_2=\emptyset \vee \{q+1,\dots,m\}$ are locally Lipschitz on $\setD$. 

When $C(p)\neq \emptyset$ for $p\in \setD$, problem $\eqref{M(v,p)}$ is uniquely solvable for any $v\in \hilbertH$ and the solution $P(v,p)$ to 
problem \eqref{M(v,p)}	is the projection of $v$ onto $C(p)$ i.e.
\begin{equation}\label{projection}
	P(v,p):=P_{C(p)}(v).
\end{equation}
Our aim is to prove local lipschitzness of the projection mapping $P:\ \hilbertH\times \setD \rightarrow \hilbertH$, given by \eqref{projection} at an arbitrary fixed $(\bar{v},\bar{p})$. The following example shows that even if the functions $g_i:\ \setD\rightarrow \hilbertH$, $i\in I_1\cup I_2$ are globally Lipschitz, the projection onto $C(p)$, $p\in \setD$, given by \eqref{multifunction_C}, may not be continuous (in the strong topology). For other examples see e.g. \cite{MR0388177}.
\begin{example}
	Let $p\in \mathbb{R}$, $\bar{p}=0$, $\bar{v}=(-1,-1)$ and
	\begin{equation*}
		C(p)=\left\{ x\in \mathbb{R}^2 \ \bigg|\  \begin{array}{l}
			\langle x \ |\ (-1+p,0) \rangle \leq 0 \\
			\langle x \ |\ (0,p) \rangle \leq 0 \\
		\end{array}  \right\}.
	\end{equation*}
	The projection of $v=(v_1,v_2)$ from a neighbourhood of $\bar{v}$ onto $C(p)$, for $p$ close to $\bar{p}$ is equal to
	\begin{equation*}
		P(p,v)=\left\{\begin{array}{ll}
			(0,0) & \text{if}\ -1<p<0 \\
			(0,v_2) & \text{if}\ 1>p\geq0 
		\end{array}\right. .
	\end{equation*}
	Hence, $P(\cdot,\cdot)$ is not continuous at $(\bar{p},\bar{v})$.
\end{example}

Our analysis is based on a recent results of \cite{full_stability_of_general_parametric_variational_systems}
concerning lipschitzness (and h\"olderness) of solutions to  
a class of parametric variational inclusions.

Essential part of our considerations is based on the relaxed constant rank constraint qualification (RCRCQ) introduced in \cite{om_relaxed_constant_rank_regularity_condition} and investigated in \cite{on_lipschitz_like_property,relation_between_the_consant_rank_Lu,parametric_nonlinear_programming_Minchenko}.
According to our knowledge, no result is known in the literature, in which this particular constraint qualification condition is used in the context of stability of solutions to parametric problems \eqref{problem:par} with $I_1\neq \emptyset$. 
Moreover, we assume only local lipschitzness of the right-hand-side functions $f_i$, $i\in I_1\cup I_2$ and left-hand-side functions $g_i$, $i\in I_1\cup I_2$.

Observe, that, in general, the existing continuity-type results for solutions of  problem \eqref{problem:par} are representation-dependent in the sense that e.g. MFCQ condition is representation-dependent. 
Observe that RCRCQ (Definition \ref{def:RCRCQ}) is also representation-dependent. We take this fact into account  by introducing the concept of equivalent representation (Definition \ref{def:equivalent}) and the concept of equivalent stable representation (Definition \ref{def:stability}). In Theorem \ref{theorem:necessary} we show that the under assumption (H1) the existence of a suitable  equivalent representation is necessary for the continuity of projections onto sets $C(p)$, $p\in \setD$, given by \eqref{multifunction_C}.

The organization of the paper is as follows. In Theorem \ref{theorem:Mordukhovic_2} of Section \ref{section:underlying_facts} we recall Theorem 6.5 of \cite{full_stability_of_general_parametric_variational_systems} in the form 
which corresponds to our settings. Theorem \ref{theorem:Mordukhovic_2} provides sufficient and necessary conditions for the estimate \eqref{lipschitzian_stability_of_projections} which is stronger than local lipschitzness of projection $P(\cdot,\cdot)$ (see \eqref{cond:II} of Theorem \ref{theorem:Mordukhovic_2}). For convenience of the reader we provide the proof of the sufficiency part of Theorem \ref{theorem:Mordukhovic_2}.
In Section \ref{section:RCRCQ} we recall the relaxed constant rank qualification (RCRCQ) and the results concerning Lipschitz-likeness of parametrized constrained sets $C(p)$, $p\in\setD$, given by \eqref{multifunction_C}. 
In Section \ref{section:rcrcq_and_Lagrange_multipliers} we investigate Lagrange multipliers of problem \eqref{M(v,p)} under RCRCQ. 
In Section \ref{section:stable_representations} we introduce the concept of equivalent stable representation of sets $C(p)$, $p\in \setD$, given by \eqref{multifunction_C}. Main results of this section are Corollary \ref{remark:RCRCQ_extended} and Theorem \ref{theorem:necessary}. 
Section \ref{section:main_results} contains main result of the present paper (Theorem \ref{theorem:main}) together with a number of corollaries referring to several particular cases of problem \eqref{M(v,p)}. Section \ref{section:conclusion} concludes.
\section{Underlying facts}\label{section:underlying_facts}

Finding $P(v,p)$, given by \eqref{projection}, amounts  to solving the  parametric variational inequality
\begin{equation}\tag{PV($v$,$p$)}\label{variational_ineq1}
	\text{find}\ x\in \hilbertH\ \text{s.t}\quad v\in x+ N(x;C(p)),
\end{equation}
where $N(x;C(p))$ stands for the normal cone (in the sense of convex analysis) to the set $C(p)$ at $x\in C(p)$ i.e.,
\begin{equation*}
	N(x;C(p)):=\{ h\in \hilbertH \ |\ \langle h \mid y-x\rangle \leq 0,\ \forall y \in C(p)  \}.
\end{equation*} 

Local lipschitzness of solutions to general parametric variational inequality has been recently investigated by Mordukhovich, Nghia and Pham in \cite[Theorem 6.5]{full_stability_of_general_parametric_variational_systems}.

We apply Theorem 6.5 of \cite{full_stability_of_general_parametric_variational_systems} to investigate conditions under which the mapping $P:\ \hilbertH\times \setD\rightarrow \hilbertH$ defined by \eqref{projection} is locally Lipschitz at a given $(\bar{v},\bar{p})\in \hilbertH\times \setD$. 

From the point of view of applications it is also interesting to investigate the particular case of problem $\eqref{M(v,p)}$ with $v\equiv \bar{v}$, i.e., 
\begin{align}\tag{M($p$)}\label{M(p)}
	\begin{aligned}
		&\min_{x\in \hilbertH}\, \frac{1}{2}\|x-\bar{v}\|^2,\\
		&\text{subject to}\ x\in C(p):=\left\{ x\in \hilbertH\ \bigg|\ 
		\begin{array}{ll}
			\langle x \ |\ g_i(p)\rangle = f_i(p), &i \in I_1,\\
			\langle x \ |\ g_i(p)\rangle \leq f_i(p), &i \in I_2
		\end{array}
		\right\},
	\end{aligned}
\end{align}
where $\bar{v}\in \hilbertH$, i.e. the problem $\eqref{M(p)}$ does not depend on parameter $v$.

When $g_i(p)\equiv g_i\in \hilbertH$, $i\in I_1\cup I_2$, the sets $C(p)$ take the form
\begin{equation}
	\label{fixed_normal_vectors}
	\hat{C}(p):=\left\{ x\in \hilbertH\ \bigg|\ 
	\begin{array}{ll}
		\langle x \ |\ g_i\rangle = f_i(p), &i \in I_1,\\
		\langle x \ |\ g_i\rangle \leq f_i(p), &i \in I_2
	\end{array}
	\right\}
\end{equation}
and the stability of the respective variational system $\eqref{variational_ineq1}$ has been investigated in \cite{lipschitz_continuity_of_solutions_of_variatioal_inequalities}.

For any multifunction ${\mathcal F}:\ X\rightrightarrows Y$ its domain and graph are defined as
\begin{align*}
	\mbox{dom}\,{\mathcal F}	&:=\{u\in X\ |\ {\mathcal F}(u)\neq\emptyset\},\\
	\gph{\mathcal F} &:= \{(u,y) \in X\times Y \mid y\in F(u) \}. 
\end{align*}

\begin{definition} \cite[Definition 1.40]{Mordukhovich_book_1} Let $X$, $Y$ be normed spaces.
	\label{definition_pseudo-Lipschitz} Let ${\mathcal F}:\ X\rightrightarrows Y$ be a multifunction with $\text{dom\, }{\mathcal F}\neq \emptyset$.
	Given $(\bar{u},\bar{y})\in \gph {\mathcal F}$, we say that ${\mathcal F}$ is \textit{Lipschitz-like} (pseudo-Lipschitz, has the Aubin property) around $(\bar{u},\bar{y})$ with modulus $\ell\geq 0$ if there are neighbourhoods $U(\bar{u})$ of $\bar{u}$ and $V(\bar{y})$ of $\bar{y}$ such that 
	\begin{equation*}
		{\mathcal F}(u_1)\cap V(\bar{y})\subset {\mathcal F}(u_2)+\ell\|u_1-u_2\|B(0,1)\quad \text{for all}\ u_1,u_2\in U(\bar{u}),
	\end{equation*}
	where $B(0,1)$ is the open unit ball in $Y$.
\end{definition}
Let  	   $\multifC:\ \setD\rightrightarrows \mathcal H$ be a set-valued mapping  defined as $\multifC (p):=C(p)$,
\begin{equation}\label{multifunction_C}
	C(p)=\left\{ x\in \hilbertH\ \bigg|\ 
	\begin{array}{ll}
		\langle x \ |\ g_i(p)\rangle = f_i(p), &i \in I_1,\\
		\langle x \ |\ g_i(p)\rangle \leq f_i(p), &i \in I_2
	\end{array}
	\right\},
\end{equation}
where $f_i:\ \setD\rightarrow \mathbb{R}$, $g_i:\ \setD\rightarrow\hilbertH$, $i\in I_1\cup I_2\neq \emptyset,$ $I_1=\emptyset \vee \{1,\dots,q\}$, $I_2=\emptyset \vee \{q+1,\dots,m\}$  are   locally Lipschitz  on $\setD$. The set $C(p)$ is the feasible solution  set of  problem \eqref{M(v,p)}.

In view of \cite[Example 6.4]{full_stability_of_general_parametric_variational_systems}, the fact \cite[Lemma 6.2]{full_stability_of_general_parametric_variational_systems}   
applied to problem \eqref{variational_ineq1} takes the following form.

\begin{proposition}\cite[Lemma 6.2]{full_stability_of_general_parametric_variational_systems} \label{proposition:holderian}
	Let $\bar{x}=P(\bar{v},\bar{p})$, $\bar{v}\in \hilbertH$, $\bar{p}\in \setD$. If $\multifC$ is Lipschitz-like around $(\bar{p},\bar{x})$, then there exist constants $\kappa_0,\ell^0>0$ and neighbourhoods $W(\bar{v})$, $U(\bar{p})$  that the estimate
	\begin{align}\label{condition:holderian_stability}
		\begin{aligned}
			\| (v_1-v_2)-2\kappa_0[P(v_1,p_1)-P(v_2,p_2)]\|\leq \|v_1-v_2\|+\ell^0\|p_1-p_2\|^{1/2}
		\end{aligned}		
	\end{align}
	holds for all $(v_1,p_1),(v_2,p_2)\in W(\bar{v})\times U(\bar{p})$.
\end{proposition}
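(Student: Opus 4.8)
The plan is to exploit the variational characterisation of the metric projection together with the Aubin property of $\multifC$ as a device for transporting feasible points between $C(p_1)$ and $C(p_2)$. Write $x_1:=P(v_1,p_1)$ and $x_2:=P(v_2,p_2)$. Since each $C(p)$ is closed and convex, $x_j$ is characterised by $x_j\in C(p_j)$ together with $\langle v_j-x_j\mid y-x_j\rangle\le 0$ for all $y\in C(p_j)$, $j=1,2$. If the two sets coincided this pair of inequalities would already yield firm nonexpansiveness of the projection; the only additional ingredient needed is a way to feed an admissible test point of one set into the inequality attached to the other, and this is exactly what Lipschitz-likeness supplies.

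First I would localise. Because $(\bar p,\bar x)\in\gph\multifC$ and $\multifC$ is Lipschitz-like around $(\bar p,\bar x)$ with some modulus $\ell$ on neighbourhoods $U(\bar p)$ and $V(\bar x)$, the sets $C(p)$ are nonempty for $p\in U(\bar p)$ and deform by at most $\ell\|p-\bar p\|$ near $\bar x$; combined with continuity of the projection onto the fixed convex set $C(\bar p)$, this forces $P(v,p)\to\bar x$ as $(v,p)\to(\bar v,\bar p)$. Hence I may shrink $W(\bar v)$ and $U(\bar p)$ so that $x_1,x_2\in V(\bar x)$ and so that $\kappa:=\sup\{\dist(v,C(p))\mid (v,p)\in W(\bar v)\times U(\bar p)\}<\infty$, whence $\|v_j-x_j\|=\dist(v_j,C(p_j))\le\kappa$ for $j=1,2$.

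Next comes the core estimate. Applying the Aubin inclusion to $x_2\in C(p_2)\cap V(\bar x)$ (with the roles of $p_1,p_2$ interchanged) produces a point $y\in C(p_1)$ with $\|y-x_2\|\le\ell\|p_1-p_2\|$, and symmetrically a point $z\in C(p_2)$ with $\|z-x_1\|\le\ell\|p_1-p_2\|$. Substituting $y$ into the first variational inequality and $z$ into the second, estimating the perturbation terms by Cauchy--Schwarz, and adding the two inequalities, the cross terms combine into $\langle (v_1-v_2)-(x_1-x_2)\mid x_2-x_1\rangle=\|x_1-x_2\|^2-\langle v_1-v_2\mid x_1-x_2\rangle$, giving the key inequality
\[
\|x_1-x_2\|^2-\langle v_1-v_2\mid x_1-x_2\rangle\le 2\kappa\,\ell\,\|p_1-p_2\| .
\]

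Finally I would convert this into the stated form. Taking $\kappa_0=1$ (the gradient $x\mapsto x-v$ is $1$-strongly monotone, which is what fixes the value of $\kappa_0$) and expanding the square,
\[
\|(v_1-v_2)-2[x_1-x_2]\|^2=\|v_1-v_2\|^2+4\bigl(\|x_1-x_2\|^2-\langle v_1-v_2\mid x_1-x_2\rangle\bigr)\le\|v_1-v_2\|^2+8\kappa\ell\|p_1-p_2\| .
\]
The subadditivity $\sqrt{a+b}\le\sqrt a+\sqrt b$ then yields \eqref{condition:holderian_stability} with $\ell^0:=2\sqrt{2\kappa\ell}$, and the exponent $1/2$ on $\|p_1-p_2\|$ is produced precisely at this step. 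I expect the localisation of the second paragraph to be the main obstacle: one must guarantee that both projections remain inside the neighbourhood $V(\bar x)$ on which the Aubin property is available and that $\dist(v,C(p))$ is uniformly bounded there, since every constant in the final estimate depends on these two facts; once they are secured, the algebra of the last two paragraphs is routine.
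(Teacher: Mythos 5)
Your overall strategy---transport feasible points with the Aubin property, plug them into the two variational inequalities, add, and expand the square---is sound, and the algebra of your last two paragraphs is correct: it yields exactly \eqref{condition:holderian_stability} with $\kappa_0=1$ and $\ell^0=2\sqrt{2\kappa\ell}$. (Note that the paper itself gives no proof of this proposition; it quotes it from Lemma 6.2 of the cited reference, so your argument, if complete, would be a self-contained substitute for that citation.) However, there is a genuine gap, and it sits exactly where you yourself suspect: the localisation. You must put $x_1=P(v_1,p_1)$ and $x_2=P(v_2,p_2)$ inside the neighbourhood $V(\bar x)$ on which the Aubin inclusion is valid, and your justification---that $C(p)$ deforms by at most $\ell\|p-\bar p\|$ near $\bar x$, ``combined with continuity of the projection onto the fixed convex set $C(\bar p)$''---does not prove this. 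What the Aubin property gives directly is only a point $w_p\in C(p)$ with $\|w_p-\bar x\|\le \ell\|p-\bar p\|$, whence $\dist(v,C(p))\le\|v-\bar x\|+\ell\|p-\bar p\|$ and therefore only $\|P(v,p)-\bar x\|\le 2\|v-\bar x\|+\ell\|p-\bar p\|\approx 2\|\bar v-\bar x\|$. Since the interesting case is $\bar v\notin C(\bar p)$, this bound is not small: a priori the projection could be captured by a portion of $C(p)$ far from $\bar x$, about which the purely local Aubin inclusion says nothing, and nonexpansiveness of the projection onto the fixed set $C(\bar p)$ in $v$ is irrelevant to that danger. Moreover, asserting $P(v,p)\to\bar x$ outright is uncomfortably close to assuming the continuity statement that the proposition is designed to deliver.

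The gap can be filled, but it requires a real argument exploiting convexity, for instance: with $w_p$ as above and $x=P(v,p)$, the segment $[w_p,x]$ lies in $C(p)$, its initial portion lies in $C(p)\cap V(\bar x)$ and hence (by the Aubin inclusion in the other direction) within $\ell\|p-\bar p\|$ of $C(\bar p)$; feeding these points into the inequality $\langle \bar v-\bar x\mid y-\bar x\rangle\le 0$, valid for all $y\in C(\bar p)$, gives $\langle \bar v-\bar x\mid x-\bar x\rangle\le\eta$ with $\eta\to 0$ as $(v,p)\to(\bar v,\bar p)$, and combining this with $\|x-\bar v\|\le \dist(\bar v,C(\bar p))+o(1)$ forces $\|x-\bar x\|\to 0$. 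Only after such a step do your transported points $y,z$ exist and does the key inequality $\|x_1-x_2\|^2-\langle v_1-v_2\mid x_1-x_2\rangle\le 2\kappa\ell\|p_1-p_2\|$ become legitimate. In short: right skeleton and correct final computation, but the step you flagged as ``the main obstacle'' is precisely the step that is missing, and without it the proof does not go through.
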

Let us note, that in view of Lemma 6.2 of \cite{full_stability_of_general_parametric_variational_systems}, we have $\kappa_0=1-\lambda r$, where in our case $\lambda=1$ and $r=0$ (Lemma 5.2 of \cite{full_stability_of_general_parametric_variational_systems} remain true for $\multifR=r=0$), i.e., \eqref{condition:holderian_stability} takes the form 
\begin{align}\label{condition:holderian_stability2}
	\begin{aligned}
		\| (v_1-v_2)-2[P(v_1,p_1)-P(v_2,p_2)]\|\leq \|v_1-v_2\|+\ell^0\|p_1-p_2\|^{1/2}.
	\end{aligned}		
\end{align}

For  problems considered in the present paper, Theorem 6.5 of \cite{full_stability_of_general_parametric_variational_systems} takes the following form.

\begin{theorem}\cite[Theorem 6.5]{full_stability_of_general_parametric_variational_systems}\label{theorem:Mordukhovic_2}
	Let $\bar{p}\in \setD$, $\bar{v}\in \hilbertH$ and $\bar{x}=P(\bar{v},\bar{p})$.	Suppose that 
	\begin{enumerate}[(A)]
		\item\label{cond:A}
		$\multifC$ is Lipschitz-like around $(\bar{p},\bar{x})$.
	\end{enumerate}
	The following conditions are equivalent.
	\begin{enumerate}[(I)]
		\item\label{cond:I} The graphical subdifferential mapping $Gr:\ \setD\rightrightarrows\hilbertH\times \hilbertH$ defined as
		\begin{equation}\label{mulfiunction:K}
			Gr(p):=\{ (x,x^\prime)\ |\ x\in C(p),\ x^\prime \in N(x;C(p))  \}=\gph N(\cdot;C(p))
		\end{equation}
		is Lipschitz-like around $(\bar{p},\bar{x},\bar{v}-\bar{x})$.
		\item\label{cond:II} 	There exist  neighbourhoods $W(\bar{v})$, $U(\bar{p})$  such that the estimate
		\begin{align*}
			\begin{aligned}
				\| (v_1-v_2)-2[P(v_1,p_1)-P(v_2,p_2)]\|\leq \|v_1-v_2\|+\ell^0\|p_1-p_2\|
			\end{aligned}		
		\end{align*}
		holds for all $(v_1,p_1),(v_2,p_2)\in W(\bar{v})\times U(\bar{p})$ with some positive constant 
		$\ell^0$.
	\end{enumerate}
	
\end{theorem}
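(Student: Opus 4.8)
We only prove the sufficiency part, namely the implication \eqref{cond:I} $\Rightarrow$ \eqref{cond:II}; the reverse implication is the one established in \cite[Theorem 6.5]{full_stability_of_general_parametric_variational_systems}. The starting point is the variational characterization of the projection coming from \eqref{variational_ineq1}: for $(v,p)\in\hilbertH\times\setD$ one has $x=P(v,p)$ if and only if $(x,v-x)\in Gr(p)$, with $Gr$ the graphical subdifferential mapping \eqref{mulfiunction:K}. Since each $C(p)$ is closed and convex, the normal cone $N(\cdot;C(p))$ is monotone, i.e. $\langle u_1-u_2\mid z_1-z_2\rangle\ge 0$ whenever $u_j\in N(z_j;C(p))$. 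This monotonicity, combined with the Lipschitz-likeness of $Gr$ furnished by \eqref{cond:I}, is the whole engine of the argument.

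Fix $(v_1,p_1),(v_2,p_2)$ near $(\bar v,\bar p)$ and set $x_j:=P(v_j,p_j)$, so that $(x_j,v_j-x_j)\in Gr(p_j)$. Under assumption \eqref{cond:A}, Proposition \ref{proposition:holderian} guarantees that $P$ is (H\"older) continuous at $(\bar v,\bar p)$; hence, after shrinking $W(\bar v)$ and $U(\bar p)$, the point $(x_1,v_1-x_1)$ lies in the neighbourhood of $(\bar x,\bar v-\bar x)$ on which the Lipschitz-like estimate for $Gr$, say with modulus $\ell$, is valid. Invoking \eqref{cond:I} I would then produce a point $(y,w)\in Gr(p_2)$ with
$$\|x_1-y\|\le \ell\|p_1-p_2\|,\qquad \|(v_1-x_1)-w\|\le \ell\|p_1-p_2\|,$$
so that $\delta_x:=x_1-y$ and $\delta_n:=(v_1-x_1)-w$ are both of order $\|p_1-p_2\|$. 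Since $w\in N(y;C(p_2))$ and $v_2-x_2\in N(x_2;C(p_2))$, monotonicity in $C(p_2)$ gives $\langle w-(v_2-x_2)\mid y-x_2\rangle\ge 0$.

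Next I would introduce $a:=x_1-x_2$ and $b:=(v_1-x_1)-(v_2-x_2)$, so that $a+b=v_1-v_2$ and $b-a=(v_1-v_2)-2[P(v_1,p_1)-P(v_2,p_2)]$ is exactly the vector to be estimated. With $c:=\|v_1-v_2\|$ and $D:=\|b-a\|$, expansion of the inner product and the parallelogram law yield the two identities $D^2=c^2-4\langle a\mid b\rangle$ and $\|a\|^2+\|b\|^2=\tfrac12(c^2+D^2)$. Substituting $y-x_2=a-\delta_x$ and $w-(v_2-x_2)=b-\delta_n$ into the monotonicity inequality and expanding, I obtain $\langle a\mid b\rangle\ge \langle a\mid\delta_n\rangle+\langle\delta_x\mid b\rangle-\langle\delta_x\mid\delta_n\rangle$, whence by Cauchy--Schwarz
$$\langle a\mid b\rangle\ge -\ell\|p_1-p_2\|\,(\|a\|+\|b\|)-\ell^2\|p_1-p_2\|^2.$$

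The decisive point, and the step I expect to be the main obstacle, is upgrading this to a \emph{linear} (rather than merely H\"older) dependence on $\|p_1-p_2\|$; bounding the right-hand side crudely would only reproduce the $\|p_1-p_2\|^{1/2}$ estimate of \eqref{condition:holderian_stability2}. The trick is to keep the bound self-referential: using $\|a\|+\|b\|\le\sqrt{c^2+D^2}\le c+D$ together with $D^2=c^2-4\langle a\mid b\rangle$ leads to the quadratic inequality
$$D^2\le c^2+4\ell\|p_1-p_2\|\,(c+D)+4\ell^2\|p_1-p_2\|^2.$$
Solving this quadratic in $D$ and simplifying the square root via $\sqrt{\sigma^2+\tau^2}\le\sigma+\tau$, I would conclude $D\le c+6\ell\|p_1-p_2\|$, which is precisely \eqref{cond:II} with $\ell^0=6\ell$. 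The remaining care is purely in the bookkeeping of neighbourhoods, so that the point $(y,w)$ provided by Lipschitz-likeness exists for all admissible $(v_j,p_j)$; this is secured by combining the continuity from Proposition \ref{proposition:holderian} with a preliminary shrinking of $W(\bar v)$ and $U(\bar p)$.
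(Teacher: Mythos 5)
Your proof is correct, and it shares the paper's key first step but finishes by a genuinely different argument. Both proofs begin identically: use Proposition \ref{proposition:holderian} (valid under \eqref{cond:A}) to get continuity of $P$ at $(\bar{v},\bar{p})$, so that $(x_1,v_1-x_1)\in Gr(p_1)$ lands in the neighbourhood where \eqref{cond:I} applies, and then transport this point to some $(y,w)\in Gr(p_2)$ at distance $O(\|p_1-p_2\|)$. From there the routes diverge. The paper observes that $y$ is itself an exact projection for the parameter $p_2$, namely $y=P(y+w,p_2)$, and then re-applies the H\"older estimate \eqref{condition:holderian_stability2} to the pair $(y+w,p_2)$, $(v_2,p_2)$: since the parameters coincide, the $\|p_1-p_2\|^{1/2}$ term vanishes and one gets the firm-nonexpansiveness inequality $\|(v'-v_2)-2(y-x_2)\|\leq\|v'-v_2\|$, after which three triangle inequalities yield the conclusion with $\ell^0=4\ell_{Gr}$. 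You never identify $(y,w)$ as a projection; instead you feed it, perturbation errors $\delta_x,\delta_n$ and all, directly into the monotonicity of $N(\cdot;C(p_2))$, and recover the linear estimate by the self-referential bound $\|a\|+\|b\|\leq c+D$ and the resulting quadratic inequality in $D$, arriving at $\ell^0=6\ell$. The two finishes encode the same underlying mechanism (monotonicity of the normal cone is exactly what makes $2P_{C(p_2)}-I$ nonexpansive), but the paper's packaging is shorter and gives a slightly better constant, while yours is more self-contained at the final stage and makes explicit \emph{why} a crude bound cannot work: as you correctly note, estimating $\|a\|+\|b\|$ by a constant would only reproduce the H\"older exponent $1/2$, and it is the quadratic self-reference that upgrades it to a Lipschitz rate. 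One minor bookkeeping remark: the Lipschitz-likeness of $Gr$ bounds the distance in the product space $\hilbertH\times\hilbertH$, so whether one takes the sum norm (as the paper does in \eqref{inc:3}) or componentwise bounds (as you do) only affects the constant, not the validity of the argument.
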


In view of applications we have in mind, for convenience of the reader, we provide the proof of (I) $\implies$ (II) of Theorem \ref{theorem:Mordukhovic_2}.

\begin{proof}
	By Proposition \ref{proposition:holderian}, there exist constant
	$\ell^0>0$  and neighbourhoods $V(\bar{v})$, $Q(\bar{p})$  such that the estimate
	\begin{align}\label{ineq:Holder}
		\begin{aligned}
			\| (v_1-v_2)-2[P(v_1,p_1)-P(v_2,p_2)]\|\leq \|v_1-v_2\|+\ell^0\|p_1-p_2\|^{1/2}
		\end{aligned}		
	\end{align}
	holds for all $(v_1,p_1),(v_2,p_2)\in V(\bar{v})\times Q(\bar{p})$. Moreover, by \eqref{cond:I}, there exist  neighbourhoods $Q_1(\bar{p})\subset Q(\bar{p})$, $U_1(\bar{x})$, $V_1(\bar{v}-\bar{x})$ for which $v+u\in V(\bar{v})$ whenever $(u,p,v)\in U_1(\bar{x})\times Q_1(\bar{p})\times V_1(\bar{v}-\bar{x})$ such that
	\begin{equation}\label{eq:Lipschitzian}
		\text{Gr}(p_1)\cap [U_1(\bar{x})\times V_1(\bar{v}-\bar{x})]\subset \text{Gr}(p_2)+\ell_{Gr} \|p_1-p_2\|B(0,1)
	\end{equation}
	holds for all $p_1,p_2\in Q_1(\bar{p})$, where $\ell_{Gr}>0$ is a constant (here $B(0,1)$ is the unit ball in $\hilbertH\times\hilbertH$).	By \eqref{condition:holderian_stability2}, we have
	\begin{align*}
		\|v_1-v_2\|+\ell^0\|p_1-p_2\|^{1/2}&\geq 	\| (v_1-v_2)-2[P(v_1,p_1)-P(v_2,p_2)]\|\\
		&\geq |2\|P(v_1,p_1)-P(v_2,p_2)\|-\|v_1-v_2\||\\
		& \geq 2\|P(v_1,p_1)-P(v_2,p_2)\|-\|v_1-v_2\|.
	\end{align*}
	Hence
	\begin{equation*}
		\|P(v_1,p_1)-P(v_2,p_2)\|\leq \|v_1-v_2\|+\frac{\ell^0}{2}\|p_1-p_2\|^{1/2}.
	\end{equation*}
	There exist neighbourhoods $U_2(\bar{x}), Q_2(\bar{p}),V_2(\bar{v})$, such that $U_2(\bar{x})\times Q_2(\bar{p})\times V_2(\bar{v})\subset U_1(\bar{x})\times Q_1(\bar{p})\times V(\bar{v})$ and $P(V_2(\bar{x}),Q_2(\bar{p}))\subset U_2(\bar{x})$, and 
	\begin{equation*}
		v-u=v-\bar{v}-(u-\bar{x})+\bar{v}-\bar{x}\subset V_1(\bar{v}-\bar{x})
	\end{equation*}
	for all $(u,p,v)\in U_2(\bar{x})\times Q_2(\bar{p})\times V_2(\bar{v})$.
	Now pick $(v_1,p_1),(v_2,p_2)\in V_2(\bar{v})\times Q_2(\bar{p})$ and define $u_1:=P(v_1,p_1)\in U_2(\bar{x})$ and $u_2:=P(v_2,p_2)\in U_2(\bar{x})$. Therefore, we have $v_1^\prime:=v_1-u_1\in N(u_1;C(p_1))\cap V_1(\bar{v}-\bar{x})$, i.e.,
	$(u_1,v_1^\prime)\in Gr(p_1)\cap (U_1(\bar{x})\times V_1(\bar{v}-\bar{x}))$. By \eqref{eq:Lipschitzian},  there is $(u,v)\in Gr(p_2)$ satisfying 
	\begin{equation*}
		\|u-u_1\|+\|v-v_1^\prime\|\leq \ell_{Gr}\|p_1-p_2\|.
	\end{equation*}
	Define $v^\prime:= u+v \in u + N(u;C(p_2))$, i.e., $u=P(v^\prime, C(p_2))$. Then
	\begin{align*}
		\|v^\prime -v_1\|= \| u+v - u_1 -v_1^\prime \|\leq \|v-v_1^\prime\| + \| u-u_1\|	\leq \ell_{Gr}\|p_1-p_2\|.
	\end{align*}
	Hence,  $V^\prime(\bar{v}) \subset V(\bar{v})$ by choosing $Q_2(\bar{p})$ sufficiently small. By \eqref{ineq:Holder}, for pairs $(v^\prime,p_2)$ and $(v_2,p_2)$ we have
	\begin{equation*}
		\|(v^\prime-v_2)-2(u-u_2)\|\leq \|v^\prime -v_2\|.
	\end{equation*}
	Hence, for any $(v_1,p_1),(v_2,p_2)\in V_2(\bar{v})\times Q_2(\bar{p})$,
	\begin{align*}
		\|(v_1-v_2)-2 (u_1-u_2)\|&\leq \|(v^\prime -v_2)-2 (u-u_2)\|+\|v^\prime -v_1 \| +2 \|u_1-u\|\\
		&\leq \|v^\prime -v_2 \| +\|v^\prime -v_1 \| +2 \|u_1-u\|\\
		&\leq \|v_1 -v_2 \| + \|v^\prime -v_1\| +  \|v^\prime -v_1 \| +2 \|u_1-u\|\\
		&\leq \|v_1 -v_2 \| +4 \ell_{Gr}\|p_1-p_2\|.
	\end{align*}
\end{proof}
\begin{remark}
	It follows from the proof that under assumptions of Theorem \ref{theorem:Mordukhovic_2} and condition \eqref{cond:I}, the  estimate in \eqref{cond:II} holds with constant $\ell^0=4\ell_{Gr}$.
\end{remark}

\begin{remark}\label{remark:equivalence_domain}
	Clearly, 
	$
	\text{dom}\, Gr=\{p\in{\mathcal G}\ |\ Gr(p)\neq\emptyset\}=\text{dom\,}\mathbb{C}
	$
	and
	$$
	Gr(p)=\{(x,0) \ |\ x\in\text{int}\, C(p)\}\cup\{(x,x')\ |\ x\in\text{bd}\, C(p),\ x'\in N(x; C(p))\neq\{0\} \}
	$$
	for  $p\in\text{dom}\, Gr$. Consequently, by taking $\bar{p}\in\text{dom\,}\mathbb{C}$, 
	$0\neq\bar{v}-\bar{x}\in N(\bar{x};C(\bar{p}))$ and a neighbourhood
	$V(0)$ in ${\mathcal H}$ such that $0\not\in \bar{v}-\bar{x}+V(0)$
	\begin{equation} 
		x\in (\bar{x}+V(0))\cap C(p)\wedge\  x'\in (\bar{v}-\bar{x}+V(0))\cap N(x;C(p))\ \ \Rightarrow
		x\in \text{bd\,} C(p)
	\end{equation}
	for $p$ close to $\bar{p}$.  
\end{remark}

In view of   Theorem \ref{theorem:Mordukhovic_2} to prove \eqref{cond:II} we need to show \eqref{cond:I} and the condition \eqref{cond:A}.
Condition \eqref{cond:A} was investigated in details in \cite{on_lipschitz_like_continuity_arxiv,on_lipschitz_like_property} and it is  discussed in Section \ref{section:RCRCQ}. Condition \eqref{cond:I} is proved  in Proposition \ref{prop:2ab2} in Section  \ref {section:main_results} with the help of  a number of  propositions proved in Section  \ref{section:rcrcq_and_Lagrange_multipliers}.

In the sequel we make an extensive use of the lower Kuratowski limit  for a multifunction ${\mathcal F}:\ \setD \rightrightarrows \hilbertH$ at $\bar{p}$ defined as 
\begin{equation*}
	\liminf_{p\rightarrow\bar{p},\ p\in \setD} {\mathcal F}(p):=\{ y\in \hilbertH\ |\ \forall\, p_k\rightarrow \bar{p},\ p_k\in \setD,\ \exists\, y_k\in {\mathcal F}(p_k)\quad y_k \rightarrow y\}.
\end{equation*}
Equivalently, $\bar{x}\in\liminf\limits_{p\rightarrow \bar{p},\ p\in \setD}{\mathcal F}(p)$ if for every neighbourhood $V(\bar{x})$ of $\bar{x}$
there exists a neighbourhood $U(\bar{p})$ of $\bar{p}$ such that $V(\bar{x})\cap {\mathcal F}(p)\neq\emptyset$ for  $p\in U(\bar{p})$.

The following   condition related to the lower Kuratowski limit is  necessary for the continuity of the projection
mapping $P$. 
\begin{proposition}
	Let $\bar{p}\in \setD$  and $\bar{v}\in\hilbertH$. If the mapping $P:\ \hilbertH \times \setD \rightarrow \hilbertH$ given by \eqref{projection}
	is continuous at $(\bar{v},\bar{p})\in \hilbertH\times \setD$ with $\bar{x}:=P(\bar{v},\bar{p})\in C(\bar{p})$, then 
	$\bar{x}\in\liminf\limits_{p\rightarrow \bar{p},\ p\in \setD}\multifC(p)$.
	
\end{proposition}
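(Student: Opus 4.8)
The plan is to reduce everything to the equivalent characterization of the lower Kuratowski limit recalled immediately above the statement: $\bar{x}\in\liminf\limits_{p\rightarrow\bar{p},\,p\in\setD}\multifC(p)$ holds if and only if for every neighbourhood $V(\bar{x})$ of $\bar{x}$ there is a neighbourhood $U(\bar{p})$ of $\bar{p}$ with $V(\bar{x})\cap C(p)\neq\emptyset$ for all $p\in U(\bar{p})$. The point that makes the argument short is that the projection $P(\bar{v},p)$, wherever it is defined, is by construction a point of $C(p)$; hence continuity of $P$ in the parameter $p$ (with $v$ frozen at $\bar{v}$) automatically supplies the required points of $C(p)$ lying close to $\bar{x}$.

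Concretely, I would fix an arbitrary $\varepsilon>0$ and take $V(\bar{x})=B(\bar{x},\varepsilon)$. By continuity of $P$ at $(\bar{v},\bar{p})$ there is $\delta>0$ such that $\|P(v,p)-P(\bar{v},\bar{p})\|<\varepsilon$ for all $(v,p)$ in the domain of $P$ with $\|v-\bar{v}\|<\delta$ and $d(p,\bar{p})<\delta$, $p\in\setD$. Restricting to $v=\bar{v}$ and setting $U(\bar{p}):=\{p\in\setD\ |\ d(p,\bar{p})<\delta\}$, for every $p\in U(\bar{p})$ one has $P(\bar{v},p)\in C(p)$ together with $\|P(\bar{v},p)-\bar{x}\|<\varepsilon$, so that $P(\bar{v},p)\in C(p)\cap V(\bar{x})$. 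Since $\varepsilon>0$ was arbitrary, the characterization yields $\bar{x}\in\liminf\limits_{p\rightarrow\bar{p},\,p\in\setD}\multifC(p)$. Equivalently, one may phrase this sequentially: given any $p_k\rightarrow\bar{p}$ in $\setD$, the points $y_k:=P(\bar{v},p_k)$ lie in $C(p_k)$ and satisfy $y_k\rightarrow\bar{x}$ by continuity, which is exactly the defining condition of the lower limit.

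The one delicate point, which I expect to be the only genuine obstacle, is that $P(\bar{v},p)$ must actually be defined for $p$ close to $\bar{p}$, i.e. that $C(p)\neq\emptyset$ throughout a neighbourhood of $\bar{p}$ in $\setD$. This is precisely what continuity of the single-valued map $P$ at $(\bar{v},\bar{p})$ presupposes: continuity of a function at a point requires the function to be defined on a whole neighbourhood of that point, equivalently that $\bar{p}$ lie in the interior (relative to $\setD$) of $\text{dom}\,\multifC=\text{dom}\,P$. I would make this observation explicit at the outset, after which the finitely many indices $k$ for which $C(p_k)$ might a priori be empty become irrelevant to the convergence $y_k\rightarrow\bar{x}$, and the remaining steps are routine.
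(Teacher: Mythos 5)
Your proposal is correct and rests on exactly the same observation as the paper's proof: $P(\bar{v},p)\in C(p)$ by construction, and continuity of $P$ forces these points into any prescribed neighbourhood $V(\bar{x})$ for $p$ close to $\bar{p}$. The only difference is presentational — the paper argues contrapositively (if $\bar{x}\notin\liminf\limits_{p\rightarrow\bar{p},\ p\in\setD}\multifC(p)$, then some neighbourhood $V(\bar{x})$ misses $C(p_U)$ for suitable $p_U$ arbitrarily close to $\bar{p}$, so $P(\bar{v},p_U)\notin V(\bar{x})$, contradicting continuity), whereas you argue directly; your added remark that well-definedness of $P$ near $(\bar{v},\bar{p})$ is presupposed (indeed, $P$ being a mapping on all of $\hilbertH\times\setD$ means $C(p)\neq\emptyset$ for every $p\in\setD$) is a sound clarification of a point the paper leaves implicit.
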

\begin{proof}
	Suppose, by contradiction, that $\bar{x}\not\in\liminf\limits_{p\rightarrow \bar{p},\ p\in \setD}\multifC(p)$ . By definition, there exists a neighbourhood $V(\bar{x})$ of $\bar{x}$ such that in  every neighbourhood $U(\bar{p})$ of 
	$\bar{p}$  there exists $p_{U}\in U(\bar{p})$ satisfying
	$$
	V(\bar{x})\cap C(p_{U})=\emptyset.
	$$
	In consequence,
	$P(\bar{v},p_{U})\not\in V(\bar{x}),$
	which contradicts the continuity of $P$ at $(\bar{v},\bar{p})$.
\end{proof}

\section{RCRCQ and Lipschitz-likeness of the set-valued mapping $\multifC$}
\label{section:RCRCQ}	
In this section we discuss Lipschitz-likeness of set-valued mapping 	   $\multifC:\ \setD\rightrightarrows \mathcal H$  defined by \eqref{multifunction_C}. 

For any $(p,x)\in \setD\times \hilbertH$, let $I_{p}(x):= \{ i\in I_1\cup I_2\ |\ \langle x\ |\ g_i(p)\rangle =f_i(p)  \}$ denote the active index set for $p\in \setD$ at $x\in \hilbertH$.

In our main results (Proposition \ref{lemma:indices_RCRCQ} , Theorem \ref{theorem:main}) we use the relaxed constant rank constraint qualification  as defined in \cite[Definition 4]{on_lipschitz_like_property}.
\begin{definition}\label{def:RCRCQ}
	The \textit{relaxed constant rank constraint qualification} (RCRCQ)  for multifunction $\multifC$ is satisfied at $(\bar{p},\bar{x})$, $\bar{x}\in C(\bar{p})$, if there exists a neighbourhood $U(\bar{p})$ of $\bar{p}$ such that, for any index set $J$, $I_1\subset J\subset I_{\bar{p}}(\bar{x})$, for every $p\in U(\bar{p})$ the system of vectors $\{ g_i(p), i\in J  \}$ has constant rank, i.e.,
	\begin{equation}\label{eq:RCRCQ}
		\text{rank}\{g_i(p),i\in J\}=\text{rank}\{g_i(\bar{p}),i\in J\}\quad \text{for all }p\in U(\bar{p}).
	\end{equation} 
\end{definition}
Clearly, condition \eqref{eq:RCRCQ} does not depend upon  $x$ from a neighbourhood of $\bar{x}$.

Proposition \ref{propostion:I_1-linearly_independent} says that we can always represent equivalently the set $C(p)$ given by \eqref{multifunction_C} in a neighbourhood of $\bar{p}\in \setD$ in the way that normal vectors of equality constraints are linearly independent. 
A finite-dimensional analogue of Proposition \ref{propostion:I_1-linearly_independent} has been established in \cite[Lemma 2.2]{relation_between_the_consant_rank_Lu}. 

\begin{proposition}\cite[Proposition 11]{on_lipschitz_like_property}\label{propostion:I_1-linearly_independent}
	Let $\bar{p}\in \setD$. Assume RCRCQ holds at $(\bar{p},\bar{x})$, $\bar{x}\in C(\bar{p})$ for multifunction $\multifC$ and $C(p)\neq \emptyset$ for $p\in U_0(\bar{p})$. There exists a neighbourhood $U(\bar{p})$ such that for all $p\in U(\bar{p})$
	\begin{align*}
		&\{ x\ |\ \langle x \ |\ g_i(p)\rangle = f_i(p),\ i\in I_1,\ \langle x \ |\ g_i(p)\rangle \leq  f_i(p),\ i\in I_2 \}\\
		&=\{ x\ |\ \langle x \ |\ g_i(p)\rangle = f_i(p),\ i\in I_1^\prime,\ \langle x \ |\ g_i(p)\rangle \leq  f_i(p),\ i\in I_2 \},
	\end{align*}
	where $I_1^\prime\subset I_1$, $|I_1^\prime|=\text{rank} \{g_i(\bar{p}),\ i \in I_1 \}$ and $g_i(p)$, $i\in I_1^\prime$ are linearly independent. 
\end{proposition}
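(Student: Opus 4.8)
The plan is to exhibit the reduced index set $I_1'$ explicitly and to show that, near $\bar p$, discarding the equality constraints indexed by $I_1\setminus I_1'$ leaves the feasible set unchanged. Set $r:=\rank\{g_i(\bar p),\ i\in I_1\}$ and choose $I_1'\subset I_1$ with $|I_1'|=r$ such that $\{g_i(\bar p),\ i\in I_1'\}$ is linearly independent; such a maximal independent subfamily exists by the definition of rank. Write
\begin{equation*}
C'(p):=\{x\in\hilbertH\ |\ \langle x\mid g_i(p)\rangle=f_i(p),\ i\in I_1',\ \langle x\mid g_i(p)\rangle\le f_i(p),\ i\in I_2\}.
\end{equation*}
The inclusion $C(p)\subset C'(p)$ is immediate, since $C'(p)$ is obtained from $C(p)$ by keeping all the $I_2$ constraints and only a subset of the equalities; the whole content of the statement is the reverse inclusion for $p$ in a suitable neighbourhood of $\bar p$.

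First I would show that the family $\{g_i(p),\ i\in I_1'\}$ remains linearly independent and spanning near $\bar p$. Linear independence persists because the Gram determinant $\det(\langle g_i(p)\mid g_j(p)\rangle)_{i,j\in I_1'}$ is a continuous function of $p$ (the $g_i$ are locally Lipschitz, hence continuous) and is nonzero at $\bar p$; thus it stays nonzero on some neighbourhood $U_1(\bar p)$. Next, since $\bar x\in C(\bar p)$ forces $I_1\subset I_{\bar p}(\bar x)$, RCRCQ applies with $J=I_1$ and yields a neighbourhood $U_2(\bar p)$ on which $\rank\{g_i(p),\ i\in I_1\}=r$. On $U_1(\bar p)\cap U_2(\bar p)$ the $r$ independent vectors $\{g_j(p),\ j\in I_1'\}$ therefore span $\operatorname{span}\{g_i(p),\ i\in I_1\}$, so for every $k\in I_1\setminus I_1'$ there are scalars $\lambda_j^k(p)$ with $g_k(p)=\sum_{j\in I_1'}\lambda_j^k(p)\,g_j(p)$.

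The decisive step is to transfer this linear relation from the left-hand-side vectors to the right-hand-side scalars, and this is precisely where the hypothesis $C(p)\neq\emptyset$ is used. Intersecting with $U_0(\bar p)$, pick any $x_0\in C(p)$; then $\langle x_0\mid g_i(p)\rangle=f_i(p)$ for all $i\in I_1$, whence
\begin{equation*}
f_k(p)=\langle x_0\mid g_k(p)\rangle=\sum_{j\in I_1'}\lambda_j^k(p)\,\langle x_0\mid g_j(p)\rangle=\sum_{j\in I_1'}\lambda_j^k(p)\,f_j(p).
\end{equation*}
Now take an arbitrary $x\in C'(p)$. For each $k\in I_1\setminus I_1'$,
\begin{equation*}
\langle x\mid g_k(p)\rangle=\sum_{j\in I_1'}\lambda_j^k(p)\,\langle x\mid g_j(p)\rangle=\sum_{j\in I_1'}\lambda_j^k(p)\,f_j(p)=f_k(p),
\end{equation*}
so $x$ satisfies the discarded equalities as well, giving $x\in C(p)$. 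Hence $C'(p)\subset C(p)$ on $U(\bar p):=U_0(\bar p)\cap U_1(\bar p)\cap U_2(\bar p)$, and the two sets coincide there. The one genuinely non-routine point is the observation in this last paragraph: the constant-rank hypothesis by itself controls only the normal vectors $g_i(p)$, and without the solvability assumption $C(p)\neq\emptyset$ the right-hand sides $f_i(p)$ could violate the corresponding linear relation, so that deleting a dependent equality would shift the affine part of the set rather than leave it invariant.
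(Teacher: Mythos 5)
Your proof is correct, and it is essentially the canonical argument for this result: the paper itself does not prove the proposition (it is quoted from Proposition 11 of the cited reference, whose finite-dimensional ancestor is Lemma 2.2 of Lu), and that argument proceeds exactly as yours does --- persistence of linear independence of $\{g_i(p),\ i\in I_1'\}$ near $\bar p$, constant rank via RCRCQ with $J=I_1$ (admissible since $I_1\subset I_{\bar p}(\bar x)$), expression of each discarded $g_k(p)$ as a combination of the retained vectors, and transfer of that linear relation to the right-hand sides $f_i(p)$ through an arbitrary feasible point $x_0\in C(p)$, which is precisely where the hypothesis $C(p)\neq\emptyset$ enters. Your closing remark correctly identifies the one non-routine point: without nonemptiness the rank condition alone cannot prevent the deleted equalities from being inconsistent with (rather than redundant for) the remaining system.
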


In view of Proposition \ref{propostion:I_1-linearly_independent}, in the sequel we assume that for any $\bar{p} \in \setD$,  $g_i(p)$, $i\in I_1$ are linearly independent in some neighbourhood $U(\bar{p})$.

\begin{remark}
	\label{remark_right_handside}  Let us note that for the set-valued mapping  $\hat{\multifC}:\ \setD\rightrightarrows \mathcal H$, 
	$\hat{\multifC} (p):=\hat{C}(p)$, with $\hat{C}(p)$ defined by
	\eqref{fixed_normal_vectors}, the relaxed constant rank constraint qualification condition RCRCQ is satisfied at any $(p,x)\in\text{gph}\hat{\multifC}$.
\end{remark}

In the case of absence of equality constraints  in \eqref{multifunction_C} the condition RCRCQ is equivalent to constant rank constraint qualification (CRCQ) (see \cite{ANDREANI2014,directional_derivative_Janin,implications_of_the_consant_rank_Lu}) which has been already used in \cite{full_lipschitzian_and_holderian_stability_Mordukhovich} in proving lipschitzness of projections.

The following theorem has been proved in \cite{on_lipschitz_like_property}.
\begin{theorem}(\cite[Theorem 9]{on_lipschitz_like_property})\label{theorem:main_section_RCRCQ}
	Let $\mathcal H$ be a Hilbert space, $\setD\subset \hilbertG$ be a subset of a normed space $\hilbertG$ and let $\multifC:\ \setD\rightrightarrows \hilbertH$ be given by \eqref{multifunction_C}. Assume RCRCQ is satisfied at 
	$(\bar{p},\bar{x})\in \gph \multifC$ and $\bar{x}\in \liminf\limits_{p\rightarrow \bar{p},\ p\in \setD} \multifC(p)$. Then $\multifC$ is Lipschitz-like at $(\bar{p},\bar{x})$.
\end{theorem}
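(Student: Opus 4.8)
The plan is to verify the Lipschitz-like inclusion of Definition \ref{definition_pseudo-Lipschitz} directly, by reducing the parametric polyhedral system to a Hoffman-type error bound whose constant is controlled \emph{uniformly in the parameter} by RCRCQ, and then absorbing the variation of the data through their local Lipschitzness. Throughout write $I_a:=I_{\bar p}(\bar x)$ for the active index set at the base point, and recall that by Proposition \ref{propostion:I_1-linearly_independent} we may assume $\{g_i(\bar p)\}_{i\in I_1}$ linearly independent.

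First I would localize. Since the constraints indexed by $i\in(I_1\cup I_2)\setminus I_a$ are inactive at $(\bar p,\bar x)$, i.e. $\langle\bar x\mid g_i(\bar p)\rangle<f_i(\bar p)$, continuity of $f_i,g_i$ and boundedness on a ball yield neighbourhoods $V(\bar x)$, $U(\bar p)$ on which these constraints remain strictly satisfied for every $x\in V(\bar x)$, $p\in U(\bar p)$. Consequently, on $V(\bar x)$ the set $C(p)$ is cut out only by the active constraints $\{\langle x\mid g_i(p)\rangle=f_i(p),\ i\in I_1\}$ and $\{\langle x\mid g_i(p)\rangle\le f_i(p),\ i\in I_2\cap I_a\}$. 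The lower-limit hypothesis $\bar x\in\liminf_{p\to\bar p}\multifC(p)$ guarantees, after shrinking $U(\bar p)$, that $C(p)\cap V(\bar x)\neq\emptyset$, so the distance $\dist(x,C(p))$ used below is finite.

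The core is a uniform error bound: I want $\gamma>0$ such that for all $p\in U(\bar p)$ and $x\in V(\bar x)$,
\[
\dist(x,C(p))\le\gamma\Big(\sum_{i\in I_1}\big|\langle x\mid g_i(p)\rangle-f_i(p)\big|+\sum_{i\in I_2\cap I_a}\big(\langle x\mid g_i(p)\rangle-f_i(p)\big)_+\Big).
\]
This is Hoffman's lemma applied to the active system, and the only issue is that $\gamma$ must not depend on $p$. Here RCRCQ enters decisively: for every $J$ with $I_1\subset J\subset I_a$ the rank of $\{g_i(p)\}_{i\in J}$ equals that of $\{g_i(\bar p)\}_{i\in J}$ for $p\in U(\bar p)$, so no active subsystem can degenerate as $p$ varies. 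Working inside the finite-dimensional span of the active gradients reduces the estimate to the finite-dimensional Hoffman lemma, and the constant-rank property lets one select, locally and continuously, a maximal independent subsystem whose Gram matrix stays uniformly nonsingular, with the dependent gradients expressible as bounded combinations of the independent ones. This produces a single $\gamma$ valid over all of $U(\bar p)$, and this uniform bound is the main obstacle of the proof.

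Finally I would convert the error bound into the Aubin inclusion. Fix $p_1,p_2\in U(\bar p)$ and $x\in C(p_1)\cap V(\bar x)$. For each active $i$, since $\langle x\mid g_i(p_1)\rangle=f_i(p_1)$ (respectively $\le f_i(p_1)$ for $i\in I_2$), writing
\[
\langle x\mid g_i(p_2)\rangle-f_i(p_2)=\langle x\mid g_i(p_2)-g_i(p_1)\rangle+\big(f_i(p_1)-f_i(p_2)\big)+\big(\langle x\mid g_i(p_1)\rangle-f_i(p_1)\big)
\]
and using local Lipschitzness of $g_i,f_i$ together with a bound $\|x\|\le M$ on $V(\bar x)$, the residual of $x$ against the constraints of $C(p_2)$ is at most $L\|p_1-p_2\|$ for a common constant $L$. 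Combined with the uniform error bound, $\dist(x,C(p_2))\le\gamma L\|p_1-p_2\|$, so there is $x'\in C(p_2)$ with $\|x-x'\|\le\ell\|p_1-p_2\|$ for $\ell=\gamma L$. As $x\in C(p_1)\cap V(\bar x)$ was arbitrary and the roles of $p_1,p_2$ are symmetric, this is exactly the Lipschitz-like property of $\multifC$ around $(\bar p,\bar x)$.
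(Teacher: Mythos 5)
Two things first: this paper never proves Theorem \ref{theorem:main_section_RCRCQ} --- it is imported from \cite{on_lipschitz_like_property}, and only the ingredients of that external proof (Lemmas \ref{lemma:linear_independent}, \ref{lemma:positive_combination2}, \ref{lemma:gramschmidt} in the Appendix) appear here --- so your attempt must be judged on its own merits; and your outer skeleton is indeed sound (localization to the constraints active at $(\bar{p},\bar{x})$, the liminf hypothesis for consistency, and the conversion of a parameter-uniform error bound plus Lipschitzness of the data into the Aubin inclusion). The genuine gap is the step you yourself flag as the main obstacle. What you actually extract from RCRCQ is (i) one maximal independent subsystem $J_0$ whose Gram matrix stays uniformly nonsingular near $\bar{p}$, and (ii) the remaining gradients written as combinations of $\{g_i(p)\}_{i\in J_0}$ with bounded coefficients; and these two facts, together with consistency, do \emph{not} imply your uniform error bound. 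Take $\hilbertH=\mathbb{R}^2$, $\setD=[0,1)$, $I_1=\emptyset$, $g_1\equiv(1,0)$, $g_2\equiv(0,1)$, $g_3(p)=(-1,p)$, $f_1\equiv 0$, $f_2(p)=p$, $f_3\equiv 0$, $\bar{p}=0$, $\bar{x}=(0,0)$. All three constraints are active at $(\bar{p},\bar{x})$, the data are Lipschitz, $\bar{x}\in C(p)$ for every $p$ (so even the liminf hypothesis holds), $J_0=\{1,2\}$ has identity Gram, and $g_3(p)=-g_1+p\,g_2$ has bounded coefficients. Yet for $p>0$ one computes $C(p)=\{x:\ x_2\le x_1/p\le 0\}$, and the point $x=(0,p/2)$ has $\dist(x,C(p))=p/2$ while its only positive residual is $(\langle x\mid g_3(p)\rangle-f_3(p))_+=p^2/2$; any admissible $\gamma$ must exceed $1/p$. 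Of course RCRCQ fails here (the pair $\{g_1,g_3\}$ loses rank at $p=0$), and in this example the Aubin property itself happens to hold, so nothing contradicts the theorem; the point is that your intermediate claim does not follow from the facts you invoke, because constant rank of \emph{every} active subsystem is precisely the information discarded when you pass to a single maximal subsystem. (A smaller flaw in the same sentence: dependent inequality constraints are not redundant --- $\langle x\mid g\rangle\le f$ together with $\langle x\mid -g\rangle\le -f$ encodes an equality --- so one cannot reduce the feasible set to the independent subsystem either.)

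The step can be repaired, but only by feeding RCRCQ in full strength into the multiplier form of Hoffman's argument. Let $\hat{x}$ be the projection of $x$ onto the localized system at $p$; then $x-\hat{x}=\sum_i\lambda_i g_i(p)$ with $\lambda_i\ge 0$ on the inequality indices active at $\hat{x}$, and by the Carath\'eodory-type Lemma \ref{lemma:positive_combination2} the support can be chosen so that $g_i(p)$, $i\in I_1\cup S$, are linearly independent; a two-line computation then gives $\|x-\hat{x}\|\le\sigma_{\min}^{-1}\|r\|$, where $r$ is the residual vector and $\sigma_{\min}$ is the smallest singular value of $\{g_i(p)\}_{i\in I_1\cup S}$. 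Now RCRCQ is used for \emph{all} subsets: since $I_1\subseteq I_1\cup S\subseteq I_{\bar{p}}(\bar{x})$ and the rank of this subsystem is constant near $\bar{p}$, linear independence at $p$ forces linear independence of $\{g_i(\bar{p})\}_{i\in I_1\cup S}$; as there are only finitely many index sets, continuity of the Gram determinants (Lemma \ref{lemma:linear_independent}) yields a single $c>0$ with $\sigma_{\min}\ge c$ for every support that can occur and every $p$ in a fixed neighbourhood of $\bar{p}$. In your counterexample above this is exactly what breaks: the support $\{g_1,g_3\}$ is independent at $p$ but not at $\bar{p}$. With $\gamma=1/c$ your error bound becomes correct, and the rest of your argument (the $O(\|p_1-p_2\|)$ residual estimate and the return from the active system to $C(p_2)$ via the strictly inactive constraints) goes through unchanged; this is, in essence, the mechanism of the proof in \cite{on_lipschitz_like_property}. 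So the Hoffman route is viable, but the implication on which your version of its central step rests is false as stated.
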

By using Proposition \ref{proposition:holderian} and Theorem \ref{theorem:main_section_RCRCQ}, we obtain the following H\"older type estimate for solutions to problem \eqref{M(v,p)}.
\begin{corollary}\label{corollay:holderian}
	Let $\mathcal H$ be a Hilbert space, $\setD\subset \hilbertG$  be a subset of a normed space $\hilbertG$ and let $\multifC:\ \setD\rightrightarrows \hilbertH$ be given by \eqref{multifunction_C}. Assume RCRCQ is satisfied at a point $(\bar{p},\bar{x})\in \gph \multifC$ and $\bar{x}\in \liminf\limits_{p\rightarrow \bar{p},\ p\in \setD} \multifC(p)$. Then there exist constant $\ell^0>0$ and neighbourhoods $W(\bar{v})$, $U(\bar{p})$  that the estimate
	\begin{align*}
		\begin{aligned}
			\| (v_1-v_2)-2[P(v_1,p_1)-P(v_2,p_2)]\|\leq \|v_1-v_2\|+\ell^0\|p_1-p_2\|^{1/2}
		\end{aligned}		
	\end{align*}
	holds for all $(v_1,p_1),(v_2,p_2)\in W(\bar{v})\times U(\bar{p})$.
\end{corollary}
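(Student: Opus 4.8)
The plan is to combine the two results that immediately precede this corollary. Corollary \ref{corollay:holderian} asserts exactly the H\"older-type estimate \eqref{condition:holderian_stability} of Proposition \ref{proposition:holderian}, but under a concrete, verifiable hypothesis (RCRCQ together with the lower Kuratowski limit condition) in place of the abstract assumption that $\multifC$ is Lipschitz-like. Since Proposition \ref{proposition:holderian} already delivers the desired estimate \emph{given} that $\multifC$ is Lipschitz-like around $(\bar{p},\bar{x})$, the entire task reduces to establishing that Lipschitz-likeness from the stated hypotheses. That bridge is furnished by Theorem \ref{theorem:main_section_RCRCQ}.

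Concretely, I would argue as follows. First I would invoke Theorem \ref{theorem:main_section_RCRCQ}: its hypotheses are, verbatim, that $\mathcal H$ is a Hilbert space, $\setD$ is a subset of a normed space $\hilbertG$, $\multifC$ is given by \eqref{multifunction_C}, RCRCQ holds at $(\bar{p},\bar{x})\in\gph\multifC$, and $\bar{x}\in\liminf_{p\to\bar{p},\,p\in\setD}\multifC(p)$. These are precisely the hypotheses of the present corollary, so the theorem applies and yields that $\multifC$ is Lipschitz-like at $(\bar{p},\bar{x})$. Second, with Lipschitz-likeness of $\multifC$ now in hand, I would apply Proposition \ref{proposition:holderian} with the given $\bar{v}\in\hilbertH$ and $\bar{x}=P(\bar{v},\bar{p})$. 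That proposition produces constants $\kappa_0,\ell^0>0$ and neighbourhoods $W(\bar{v})$, $U(\bar{p})$ such that the estimate \eqref{condition:holderian_stability} holds for all $(v_1,p_1),(v_2,p_2)\in W(\bar{v})\times U(\bar{p})$.

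Finally I would fix the value of the constant $\kappa_0$. As recorded in the remark following Proposition \ref{proposition:holderian}, in the present setting one has $\lambda=1$ and $r=0$, so $\kappa_0=1-\lambda r=1$; thus \eqref{condition:holderian_stability} specialises to the form \eqref{condition:holderian_stability2}, namely
\begin{equation*}
	\| (v_1-v_2)-2[P(v_1,p_1)-P(v_2,p_2)]\|\leq \|v_1-v_2\|+\ell^0\|p_1-p_2\|^{1/2},
\end{equation*}
which is exactly the asserted inequality. This completes the proof.

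I do not expect a genuine obstacle here: the corollary is a direct composition of Theorem \ref{theorem:main_section_RCRCQ} (hypotheses $\Rightarrow$ Lipschitz-likeness) and Proposition \ref{proposition:holderian} (Lipschitz-likeness $\Rightarrow$ H\"older estimate). The only point demanding a line of care is the identification $\kappa_0=1$, which is needed so that the factor $2\kappa_0$ appearing in \eqref{condition:holderian_stability} reduces to the coefficient $2$ displayed in the corollary; this is justified by the values $\lambda=1$, $r=0$ noted after Proposition \ref{proposition:holderian}. One should also confirm that the neighbourhoods $W(\bar{v})$ and $U(\bar{p})$ returned by Proposition \ref{proposition:holderian} are the same ones named in the statement, which they are by construction.
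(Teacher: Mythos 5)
Your proposal is correct and follows essentially the same route as the paper, which obtains this corollary precisely by combining Theorem \ref{theorem:main_section_RCRCQ} (RCRCQ plus the lower Kuratowski limit condition imply Lipschitz-likeness of $\multifC$) with Proposition \ref{proposition:holderian}, using the observation $\kappa_0=1-\lambda r=1$ (since $\lambda=1$, $r=0$) to reduce the factor $2\kappa_0$ to the coefficient $2$ in the stated estimate. No gaps.
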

In view of Remark \ref{remark_right_handside}, the following result is an immediate consequence of Theorem \ref{theorem:main_section_RCRCQ}.

\begin{theorem}\label{theorem:main_section_RCRCQ_2}
	Let $\mathcal H$ be a Hilbert space, $\setD\subset \hilbertG$  be a subset of a normed space $\hilbertG$ and let $\hat{\multifC}:\ \setD\rightrightarrows \hilbertH$,
	$\hat{\multifC} (p):=\hat{C}(p)$, with $\hat{C}(p)$ given by
	\eqref{fixed_normal_vectors}. Assume that $(\bar{p},\bar{x})\in \gph \hat{\multifC}$ and $\bar{x}\in \liminf\limits_{p\rightarrow \bar{p},\ p\in \setD} \hat{\multifC}(p)$. Then $\hat{\multifC}$ is Lipschitz-like at $(\bar{p},\bar{x})$.
\end{theorem}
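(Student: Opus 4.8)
The plan is to recognize this statement as a direct specialization of Theorem~\ref{theorem:main_section_RCRCQ} to the case of fixed normal vectors, so that the only work is to verify that the two hypotheses of that theorem are met. Indeed, the mapping $\hat{\multifC}$ is precisely the mapping $\multifC$ of \eqref{multifunction_C} in the particular situation $g_i(p)\equiv g_i$ for $i\in I_1\cup I_2$, and Theorem~\ref{theorem:main_section_RCRCQ} is applicable once RCRCQ at $(\bar p,\bar x)$ and the lower-limit condition $\bar x\in\liminf_{p\to\bar p,\ p\in\setD}\hat{\multifC}(p)$ are both in place.

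First I would note that the lower Kuratowski limit hypothesis is already listed among the assumptions of the present theorem, so nothing needs to be done on that front. The substantive point is the verification of RCRCQ, and this is exactly the content of Remark~\ref{remark_right_handside}: since the vectors $g_i$ do not depend on the parameter, for every index set $J$ with $I_1\subset J\subset I_{\bar p}(\bar x)$ and every $p$ one has $g_i(p)=g_i$, whence
\begin{equation*}
	\rank\{g_i(p),\ i\in J\}=\rank\{g_i,\ i\in J\}=\rank\{g_i(\bar p),\ i\in J\}.
\end{equation*}
Thus the constant-rank equality \eqref{eq:RCRCQ} holds trivially on any neighbourhood of $\bar p$, and RCRCQ is satisfied at $(\bar p,\bar x)$ --- in fact at every point of $\gph\hat{\multifC}$.

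With both hypotheses confirmed, I would simply invoke Theorem~\ref{theorem:main_section_RCRCQ} with $\multifC$ replaced by $\hat{\multifC}$ to conclude that $\hat{\multifC}$ is Lipschitz-like at $(\bar p,\bar x)$. There is essentially no obstacle to overcome here: the entire force of the result resides in Theorem~\ref{theorem:main_section_RCRCQ}, and the present statement merely exploits the observation that parameter-independent left-hand-side data render RCRCQ automatic, so that the only genuine standing assumption is the one on the lower Kuratowski limit.
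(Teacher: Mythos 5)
Your proof is correct and matches the paper's own argument exactly: the paper likewise derives this result as an immediate consequence of Theorem~\ref{theorem:main_section_RCRCQ}, using Remark~\ref{remark_right_handside} for the observation that parameter-independent vectors $g_i$ make RCRCQ hold automatically at every point of $\gph\hat{\multifC}$, while the Kuratowski lower-limit condition is taken as a hypothesis.
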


The Lipschitz-likeness of $\hat{\multifC}$ has already been investigated in the finite-dimensional case in \cite{aubin_criterion_for_metric_regularity} with the help of the Mangasarian-Fromowitz constraint qualification MFCQ.

\begin{definition}
	\label{def_Mangasarian_Fromowitz}
	We say that the \textit{Mangasarian-Fromowitz constraint qualification} (MFCQ) holds for $C(\bar{p})$ at $\bar{x}\in C(\bar{p})$ if vectors $g_{i}(\bar{p})$, $i\in I_{1}$ are  linearly independent and there exists $h\in{\mathcal H}$ such that
	$$
	\langle g_{i}(\bar{p})|h\rangle =0\ \ i\in I_{1},\ \ \langle g_{i}(\bar{p})|h\rangle <0\ \ i\in I_{\bar{p}}(\bar{x})
	$$
\end{definition}

The following fact relates the Mangasarian-Fromowitz constraint qualification MFCQ to the lower Kuratowski limit of the set-valued mapping
$\bar{\multifC}:\ \setD\rightrightarrows \hilbertH$,
$\bar{\multifC} (p):=\bar{C}(p)$, with $\bar{C}(p)$ given by
\eqref{multifunction_C} with $I_{1}=\emptyset$, i.e.
\begin{equation*}
	\bar{C}(p):=\left\{ x\in \hilbertH\ \bigg|\ 
	\begin{array}{ll}
		\langle x \ |\ g_i(p)\rangle \leq f_i(p), &i \in I_2
	\end{array}
	\right\}.
\end{equation*}

\begin{proposition}
	\label{prop:MFCQ-Kuratowski}
	If MFCQ holds for  $\bar{C}(\bar{p})$ at $\bar{x}\in \bar{C}(\bar{p})$,
	then $\bar{x}\in\liminf\limits_{p\rightarrow\bar{p},\ p\in \setD} \bar{C}(p)$.
\end{proposition}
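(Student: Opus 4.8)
The plan is to use the neighbourhood characterization of the lower Kuratowski limit recalled just before Definition \ref{def:RCRCQ}: it suffices to show that for every neighbourhood $V(\bar{x})$ there is a neighbourhood $U(\bar{p})$ with $V(\bar{x})\cap\bar{C}(p)\neq\emptyset$ for all $p\in U(\bar{p})$. Equivalently, fixing $\varepsilon>0$, I will produce a single point $y$ with $\|y-\bar{x}\|<\varepsilon$ that is feasible for $\bar{C}(p)$ for every $p$ in a suitable neighbourhood of $\bar{p}$. The feasible direction supplied by MFCQ is the natural device: let $h\in\hilbertH$ satisfy $\langle g_i(\bar{p})\mid h\rangle<0$ for all $i\in I_{\bar{p}}(\bar{x})$ (recall that for $\bar{C}$ we have $I_1=\emptyset$, so this strict inequality is the entire content of MFCQ here), and set $y:=\bar{x}+t h$ for a small $t>0$ to be chosen.

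First I would fix the scale $t$, working only at $p=\bar{p}$. Since $I_2$ is finite and, for each inactive index $i\in I_2\setminus I_{\bar{p}}(\bar{x})$, the slack $\langle\bar{x}\mid g_i(\bar{p})\rangle-f_i(\bar{p})$ is strictly negative, I can choose $t>0$ small enough that simultaneously $t\|h\|<\varepsilon$ and $\langle\bar{x}+th\mid g_i(\bar{p})\rangle-f_i(\bar{p})<0$ for every inactive $i$; for each active index $i\in I_{\bar{p}}(\bar{x})$ the same quantity equals $t\langle h\mid g_i(\bar{p})\rangle<0$ automatically. Thus $y=\bar{x}+th$ satisfies every constraint defining $\bar{C}(\bar{p})$ with strict inequality and lies within $\varepsilon$ of $\bar{x}$.

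With $t$ now frozen, I would let $p$ vary. For the fixed vector $y$, the maps $p\mapsto\langle y\mid g_i(p)\rangle-f_i(p)$ are continuous at $\bar{p}$, because $g_i$ is (locally Lipschitz, hence) continuous into $\hilbertH$, $f_i$ is continuous, and the inner product is continuous in its second argument for a fixed first argument. Each of these finitely many functions is strictly negative at $\bar{p}$, so each stays negative on some neighbourhood of $\bar{p}$; intersecting these finitely many neighbourhoods yields $U(\bar{p})$ on which $y\in\bar{C}(p)$ for all $p\in U(\bar{p})$. Since $\|y-\bar{x}\|<\varepsilon$, this gives $B(\bar{x},\varepsilon)\cap\bar{C}(p)\neq\emptyset$ for all $p\in U(\bar{p})$; as $\varepsilon>0$ was arbitrary and every neighbourhood of $\bar{x}$ contains such a ball, the neighbourhood characterization yields $\bar{x}\in\liminf\limits_{p\to\bar{p},\,p\in\setD}\bar{C}(p)$.

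The argument presents no serious obstacle; the only point requiring care is the order of the quantifiers. The scale $t$ must be selected (depending on $\varepsilon$ and on the inactive slacks at $\bar{p}$) \emph{before} the $p$-neighbourhood is shrunk, since the active constraints are held strictly only by the displacement $th$ and would degenerate if $t$ were allowed to depend on $p$. The finiteness of $I_2$ is exactly what permits all the separate neighbourhood choices to be merged into one.
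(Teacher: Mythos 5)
Your proof is correct and follows essentially the same route as the paper's: both use the MFCQ direction $h$ to produce a point $\bar{x}+th$ (the paper writes $\bar{x}+\alpha h$) that is strictly feasible for every constraint at $\bar{p}$, then exploit continuity of $p\mapsto\langle y\mid g_i(p)\rangle-f_i(p)$ (the paper makes this quantitative via the Lipschitz constants $\ell_{g_i},\ell_{f_i}$) to keep that fixed point feasible for all $p$ in a finite intersection of neighbourhoods. The only cosmetic difference is that you invoke plain continuity where the paper writes out explicit Lipschitz estimates, and your remark on the quantifier order ($t$ fixed before shrinking the $p$-neighbourhood) is exactly the structure the paper follows implicitly.
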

\begin{proof} 
	By  MFCQ, there exists $h\in{\mathcal H}$ such that
	$$
	\langle g_{i}(\bar{p})|h\rangle <0\ \ \text{for}\ \ i\in I_{\bar{p}}(\bar{x}).
	$$
	Let $V(\bar{x})$ be any neighbourhood of $\bar{x}$. There exists $\alpha>0$ such that
	$$
	\langle g_{i}(\bar{p})|\bar{x}+\alpha h\rangle<f_{i}(\bar{p})\ \ i\in I_{2}
	$$
	and $\bar{x}+\alpha h\in V(\bar{x})$. 
	Since the functions $g_{i},f_{i}$, $i\in I_1\cup I_2$ are assumed to be locally Lipschitz at $\bar{p}$ there exists a neighbourhood $U_{i}(\bar{p})$ of $\bar{p}$
	such that
	\begin{align}
		\label{lip}
		\begin{aligned}
			\langle g_{i}(p)-g_{i}(\bar{p})\ |\ \bar{x}+\alpha h\rangle\leq \ell_{g_{i}}\|p-\bar{p}\|\|\bar{x}+\alpha\cdot h\|\ \ p\in U_{i}(\bar{p}),\ i\in I_2,\\
			f_{i}(\bar{p})-\ell_{f_i}\|p-\bar{p}\|\leq f_{i}(p)\  \ \ p\in U_{i}(\bar{p}),\ i\in I_2,
		\end{aligned}
	\end{align}
	where $\ell_{f_i}$, $\ell_{g_i}$ are locally Lipschitz constants of functions $f_i$, $g_i$, $i\in I_1\cup I_2$ at $\bar{p}$, respectively.
	
	Take $\varepsilon>0$, $\varepsilon<f_{i}(\bar{p})-\langle g_{i}(\bar{p})|\bar{x}+\alpha h\rangle>0.$ By shrinking  the neighbourhood $U_{i}(\bar{p})$, $i\in I_2$, we can assume that
	$$
	\ell_{g_{i}}\|p-\bar{p}\|\|\bar{x}+\alpha\cdot h\|+\ell_{f_i}\|p-\bar{p}\|<\varepsilon.
	$$
	Consequently, for $p\in U_{i}(\bar{p})$, $i\in I_2$ we have
	$$
	\langle g_{i}(\bar{p})|\bar{x}+\alpha h\rangle+\ell_{g_{i}}\|p-\bar{p}\|\|\bar{x}+\alpha\cdot h\|+\ell_{f_i}\|p-\bar{p}\|\le \langle g_{i}(\bar{p})|\bar{x}+\alpha h\rangle +\varepsilon \le f_{i}(\bar{p})
	$$
	By  \eqref{lip},
	$$
	\begin{array}{l} 
	\langle g_{i}(p)|\bar{x}+\alpha h\rangle\le f_{i}(\bar{p})-\ell_{f_i}\|p-\bar{p}\|\le f_{i}(p),\quad i\in I_2.
	\end{array}
	$$
	By taking $U(\bar{p})=\bigcap_{i\in I_2}U_{i}(\bar{p})$,
	we obtain the assertion.	
\end{proof}
The following example shows that MFCQ is not a necessary condition for Lipschitz continuity of projection of $v$ onto $C(p)$, $p\in \setD$, given in \eqref{multifunction_C}.
\begin{example}
	Let $p\in B((0,0),1)\subset \mathbb{R}^2$, $\bar{p}=(0,0)$, $\bar{v}=(1,1)$ and
	\begin{equation*}
		C(p)=\left\{ x\in \mathbb{R}^2 \ \bigg|\  \begin{array}{l}
			\langle x \ |\ (1,0)-p \rangle - \langle p \ |\ (1,0)-p\rangle \leq 0 \\
			\langle x \ |\ (0,1)-p \rangle - \langle p \ |\ (0,1)-p\rangle \leq 0 \\
			\langle x \ |\ (-1,-1)-p \rangle - \langle p \ |\ (-1,-1)-p\rangle \leq 0 
		\end{array}  \right\}
	\end{equation*}
	Then for all $p\in B((0,0),1)$ we have $C(p)=\{p\}$. Hence $P(v,p)=p$ for $p\in B((0,0),1)$ and for any $v\in \mathbb{R}^2$. Hence, $P(\cdot,\cdot)$ is locally Lipschitz in a neighbourhood of $(\bar{v},\bar{p})$ and MFCQ is not satisfied at $P(\bar{v},\bar{p})$.
\end{example}

\section{RCRCQ and Lagrange multipliers}\label{section:rcrcq_and_Lagrange_multipliers}
In this section we investigate properties of Lagrange multipliers of problem \eqref{M(v,p)} under RCRCQ condition.
We start with the following elementary observation.

\begin{remark}\label{remark:inclusion}
	Let $\bar{p}\in \setD$, $\bar{x}\in C(\bar{p})$, where $\multifC$ is given by \eqref{multifunction_C}.	
	By the continuity of $\funf_i$, $\fung_i$, $i\in I_1\cup I_2$, at $\bar{p}$ and the continuity of the inner product, there exist a neighbourhood $U(\bar{p})$ and a neighbourhood $V(0)$ of $0\in \hilbertH$,  such that 
	\begin{align*}
		\begin{aligned}
			&\langle x \ |\ \fung_i(p) \rangle <f_i(p) \quad i\in I_2\setminus I_{\bar{p}}(\bar{x})\\
			&\langle x \ |\ \fung_i(p) \rangle \leq  f_i(p) \quad i\in I_{\bar{p}}(\bar{x})
		\end{aligned}
	\end{align*}
	for all $p\in U_2(\bar{p})$, $x\in (\bar{x}+V(0))\cap C(p)$. Hence,  $I_p(x)\subset I_{\bar{p}}(\bar{x})$ for $p\in U(\bar{p})$ and $x\in (\bar{x}+V(0))\cap C(p)$.
\end{remark}
In Proposition \ref{lemma:indices_RCRCQ} we investigate representations of
elements of $N(\bar{x}; C(\bar{p}))$ of the form \eqref{eq:representation1} in a neighbourhood of $(\bar{p},\bar{x},\bar{v}-\bar{x})$, where $\bar{v}-\bar{x}\in N(\bar{x}; C(\bar{p}))$.
Moreover, we prove that for all $p$ close to $\bar{p}$, for all $x\in C(p)$ close to $\bar{x}$, for all $x'\in N(x;C(p))$ close to $\bar{v}-\bar{x}$, there exists a representation  
$$
x'=\sum_{i \in {I_{\bar{p}}}(\bar{x})} \tilde{\lambda}_{i}g_{i}(p)
$$
where the function $\lambda:\ U(\bar{p})\times (\bar{x}+V(0))\times (\bar{v}-\bar{x}+V(0)) \rightarrow\mathbb{R}^{|I_{\bar{p}}(\bar{x})|}$,
$$
\lambda(p,x,x^\prime):=\left\{\begin{array}{lcl}
(\tilde{\lambda}_{i})_{i\in I_{\bar{p}}(\bar{x})}& \text{if}& (p,x,x^\prime)\neq(\bar{p},\bar{x},\bar{v}-\bar{x}),\\
(\bar{\lambda}_{i})_{i\in I_{\bar{p}}(\bar{x})}& \text{if}& (p,x,x^\prime)=(\bar{p},\bar{x},\bar{v}-\bar{x})
\end{array}\right.
$$
is continuous at $(\bar{p},\bar{x},\bar{v}-\bar{x})$. 

\begin{proposition}\label{lemma:indices_RCRCQ} 
	Let $\bar{p}\in \setD$. 
	Suppose that $\bar{v}\notin C(\bar{p})$, $\bar{x}\in C(\bar{p})$, $I_{\bar{p}}(\bar{x})\neq \emptyset$.\footnote{With this assumption we limit our attention to point $\bar{x}$ which lay on the boundary of $C(\bar{p})$, $\bar{x}\in \text{bd}\, C(\bar{p})$, where $\text{bd}$ denotes the boundary of a set.} Assume that RCRCQ holds at $\bar{p}$ (with a neighbourhood $U_0(\bar{p})$) for multifunction $\multifC$, given by \eqref{multifunction_C}, and $C(p)\neq \emptyset$ for $p\in U_0(\bar{p})$.
	Let
	\begin{equation}\label{eq:representation1}
		\bar{v}-\bar{x}=\sum_{i \in I_1\cup \bar{K}} \bar{\lambda}_i \fung_i (\bar{p}),\ \text{where}\ \bar{\lambda}_i>0,\ i\in \bar{K}\subset I_{\bar{p}}(\bar{x})\cap I_2
	\end{equation}  and  $g_i(\bar{p})$, $i\in I_1\cup \bar{K}$ are linearly independent\footnote{We admit $K=\emptyset$.}. Then the following conditions hold.
	\begin{enumerate}[(i)]
		\item\label{cond:i} There exist  neighbourhoods $U(\bar{p})$, $V(0)$  such that for any $p\in U(\bar{p})$ and any $(x,x^\prime)\in \mbox{gph} N(\cdot;C(p))\cap (\bar{x}+V(0),\bar{v}-\bar{x}+V(0))$ there exists ${L}\subset (I_{p}(x)\cap I_2)\setminus \bar{K}\subset (I_{\bar{p}}(\bar{x})\cap I_2)\setminus \bar{K}$\footnote{We admit $L=\emptyset$.} such that the element $x^\prime$ can be represented as 
		\begin{align}\label{eq:representation2}
			\begin{aligned}
				& x^\prime = \sum_{i \in I_1} \lambda_i \fung_i(p)+\sum_{i \in \bar{K} } \lambda_i \fung_i(p)+\sum_{i \in L } \lambda_i \fung_i(p),\\
				&  \lambda_i> 0,\ i \in \bar{K},\ \lambda_i\geq 0,\ i\in  L,
			\end{aligned}
		\end{align}
		where   $g_i(p)$, $i\in I_1\cup \bar{K}\cup L$ are linearly independent.
		\item\label{cond:ii}  For any $\varepsilon>0$ one can choose in (i) neighbourhoods $U(\bar{p})$, $V(0)$  such that in the representation \eqref{eq:representation2} we have 
		\begin{align*}
			\begin{aligned}
				& \bar{\lambda}_i-\varepsilon\leq {\lambda}_i \leq \bar{\lambda}_i+\varepsilon\quad \forall i \in I_1\cup\bar{K},\\ 
				& 0\leq {\lambda}_i \leq \varepsilon\quad \forall i \in {L}.
			\end{aligned}
		\end{align*}
	\end{enumerate}
\end{proposition}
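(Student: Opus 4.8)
My plan is to prove (i) and (ii) at once, by producing for every $(p,x,x^\prime)$ near $(\bar{p},\bar{x},\bar{v}-\bar{x})$ (with $x^\prime\in N(x;C(p))$) a representation of the form in (i) whose coefficients meet the $\varepsilon$-bounds of (ii); positivity on $\bar{K}$ is then forced as soon as $\varepsilon<\min_{i\in\bar{K}}\bar{\lambda}_i$. The starting point is the convex-analysis description of the normal cone to the polyhedron $C(p)$: for $x\in C(p)$,
\[
N(x;C(p))=\Big\{\sum_{i\in I_1}\mu_i g_i(p)+\sum_{i\in I_p(x)\cap I_2}\mu_i g_i(p)\ :\ \mu_i\ge 0\ \ (i\in I_p(x)\cap I_2)\Big\}.
\]
By Remark~\ref{remark:inclusion}, once $p,x$ are close to $\bar{p},\bar{x}$ every active index occurring here already lies in $I_{\bar{p}}(\bar{x})$. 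The conic form of Carath\'{e}odory's theorem (keeping the standing independent block $I_1$) then yields $T\subset I_p(x)\cap I_2$ with $\{g_i(p):i\in I_1\cup T\}$ linearly independent and $x^\prime=\sum_{i\in I_1\cup T}\lambda_i g_i(p)$, $\lambda_i\ge 0$ on $T$.

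I would prove the quantitative statement by contradiction. If it failed, there would be $\varepsilon>0$ and $(p_k,x_k,x_k^\prime)\to(\bar{p},\bar{x},\bar{v}-\bar{x})$, $x_k^\prime\in N(x_k;C(p_k))$, admitting no representation of the required form with the stated bounds. Apply the reduction above and pass to a subsequence along which the (finitely many) index sets $T_k$ are constant, $T_k\equiv T$. Since $I_1\cup T\subset I_{\bar{p}}(\bar{x})$, RCRCQ gives $\mathrm{rank}\{g_i(\bar{p}):i\in I_1\cup T\}=|I_1\cup T|$, so $\{g_i(\bar{p}):i\in I_1\cup T\}$ is independent as well. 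The coefficients then solve a linear system whose Gram matrix is invertible and converges, so $\lambda^k\to\lambda^*$ with $\bar{v}-\bar{x}=\sum_{i\in I_1\cup T}\lambda_i^* g_i(\bar{p})$ and $\lambda_i^*\ge 0$ on $T$.

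It remains to re-express $x_k^\prime$ on the prescribed support. Fixing any $L\subset T$ that extends $I_1\cup\bar{K}$ to a basis of $\mathrm{span}\{g_i(\bar{p}):i\in I_1\cup\bar{K}\cup T\}$, RCRCQ (constant rank on this subset of $I_{\bar{p}}(\bar{x})$) keeps $I_1\cup\bar{K}\cup L$ a basis at every $p_k$, so $x_k^\prime$ has well-defined, continuously varying coordinates in it. Since $\bar{v}-\bar{x}=\sum_{i\in I_1\cup\bar{K}}\bar{\lambda}_i g_i(\bar{p})$ and coordinates are unique in an independent system, these coordinates converge to $\bar{\lambda}_i$ on $I_1\cup\bar{K}$ and to $0$ on $L$; hence for large $k$ the $\varepsilon$-bounds of (ii) hold and the $\bar{K}$-coefficients are positive.

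The hard part is the nonnegativity required on $L$, together with the fact that an index of $\bar{K}$ may be inactive at $(p_k,x_k)$ while its direction must still be produced from active gradients. Both difficulties stem from the non-uniqueness of positive representations when the active gradients are dependent (e.g. $g_3(\bar{p})=g_1(\bar{p})+g_2(\bar{p})$ with the prescribed $\bar{K}=\{3\}$ competing with the natural support $\{1,2\}$). I would handle them by exploiting that RCRCQ freezes every linear dependence of the active gradients, so the relation expressing each $g_i(\bar{p})$ through $\{g_j(\bar{p}):j\in I_1\cup\bar{K}\cup L\}$ persists, with continuous coefficients, at each $p_k$; starting from the natural representation, which is nonnegative on the \emph{active} set $T$, a basis-exchange (pivoting) argument --- feasible because $\bar{\lambda}_i>0$ strictly on $\bar{K}$ keeps a $\bar{K}$-containing basis admissible for nearby data --- transfers nonnegativity to the coordinates on $L$ while routing the inactive $\bar{K}$-directions through the frozen dependencies. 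Choosing such an $L=L_k$ for each $k$ and passing to one more subsequence on which $L_k$ is constant returns us to the situation of the previous paragraph, now with nonnegativity on $L$, and contradicts the choice of the sequence. This pivoting step, stable under the dependency pattern pinned by RCRCQ, is where the real work lies.
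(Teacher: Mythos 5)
Your argument runs correctly up to the point where you express $x_k^\prime$ in the fixed basis $\{g_i(p_k):i\in I_1\cup\bar{K}\cup L\}$ and conclude that these coordinates converge to $\bar{\lambda}_i$ on $I_1\cup\bar{K}$ and to $0$ on $L$; that much is sound (RCRCQ freezes the ranks, so $x_k^\prime$ does lie in that span and the coordinates converge). But the proposition demands $\lambda_i\geq 0$ on $L$, and coordinates that merely converge to $0$ can be negative for every $k$. This sign condition is the actual content of the statement, and your proposal does not prove it: the concluding ``basis-exchange (pivoting) argument'' that is supposed to transfer nonnegativity to $L$ while keeping the $\bar{K}$-coefficients positive and the system $\{g_i(p_k):i\in I_1\cup\bar{K}\cup L_k\}$ linearly independent is only described, not carried out --- as you yourself note, ``this is where the real work lies.'' One cannot simply drop the negative coordinates (that changes the represented vector), and the existence of an admissible alternative $L_k$ with nonnegative coordinates is exactly what needs proof; your two-dimensional example ($g_3=g_1+g_2$) shows the pivot must be chosen data-dependently, and no general argument is given. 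So the proposal has a genuine gap.

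The gap is created by the order of operations: you first Carath\'eodorize $x_k^\prime$ onto a support $I_1\cup T$ that ignores $\bar{K}$, and then must re-insert $\bar{K}$ with the right signs. The paper's proof avoids this entirely by never removing $\bar{K}$ from the support. Starting from the normal-cone representation $x_n^\prime=\sum_{i\in I_{p_n}(x_n)}\lambda_i^n g_i(p_n)$ (nonnegative coefficients on the active inequality indices), it pads with zero coefficients on $\bar{K}\setminus I_{p_n}(x_n)$ and applies the fixed-block reduction of Lemma~\ref{lemma:positive_combination2} with $J_1=I_1\cup\bar{K}$ (coefficients of arbitrary sign, vectors linearly independent near $\bar{p}$) and $J_2=(I_{p_n}(x_n)\cap I_2)\setminus\bar{K}$. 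The output has \emph{strictly positive} coefficients on the residual set $\hat{I}_2^n\subset J_2$ and overall linear independence by construction, so nonnegativity on $L=\hat{I}_2^n$ is automatic; the only thing left in doubt is the sign on $\bar{K}$, and that is settled by the limit argument you also use (RCRCQ forces $g_i(\bar{p})$, $i\in I_1\cup\bar{K}\cup I_2^\prime$, to be linearly independent, and Lemma~\ref{lemma:gramschmidt} forces $\tilde{\lambda}_i^n\to\bar{\lambda}_i>0$ on $\bar{K}$ and $\tilde{\lambda}_i^n\to 0$ on $I_2^\prime$, which also yields part (ii)). If you rearrange your proof this way --- reduce only the complement of $I_1\cup\bar{K}$, then pass to the limit --- your remaining steps go through essentially verbatim.
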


\begin{proof} 
	Since $\fung_i(\bar{p})$, $i\in I_1\cup \bar{K}$ are linearly independent and $\fung_i:\ \setD\rightarrow \hilbertH$, $i\in I_1\cup \bar{K}$ are continuous at $\bar{p}$, by Lemma \ref{lemma:linear_independent} (see Appendix), 
	there exists a neighbourhood $U_1(\bar{p})$ such that $\fung_i(p)$, $i\in I\cup \bar{K}$, $p\in U_1(\bar{p})$ are linearly independent.
	
	By Remark \ref{remark:equivalence_domain} and Remark \ref{remark:inclusion}, there exist  neighbourhoods $U_2(\bar{p})\subset U_1(\bar{p})$ and $V_1(0)$, such that for all $p\in U_2(\bar{p})$ if $(x,x^\prime)\in \mbox{gph} N(\cdot;C(p))\cap (\bar{x}+V_1(0),\bar{v}-\bar{x}+V_1(0))$, then $I_p(x)\subset I_{\bar{p}}(\bar{x})$, $x\in \text{bd}\, C(p)$ and $x^\prime \neq 0$.	
	Moreover, by 
	\cite[Theorem 6.40]{deutsch2001best}\footnote{Let us note that each equality constraint in the set $C(\cdot)$ can be represented as two inequalities, namely  $\langle x \mid g(p) \rangle = f_i(p) \iff \langle x \mid g(p) \rangle \leq f_i(p) \wedge \langle x \mid -g(p) \rangle \leq f_i(p)$ for any $x\in \hilbertH$, $p\in \setD$ .}
	we have 
	\begin{align}\label{representation:x_prime}
		\begin{aligned}
			&x^\prime = \sum_{i \in I_{p}(x)}\lambda_i \fung_i(p),\quad 
			\lambda_i\geq 0,\ i\in I_p(x)\cap I_2.
		\end{aligned}
	\end{align}

	Now, by contrary suppose that the assertion of the proposition does not hold, i.e., 
	there exist sequences $p_n\rightarrow \bar{p}$, $x_n\rightarrow\bar{x}$, $x_n\in C(p_n)$, $x_n^\prime\rightarrow\bar{v}-\bar{x}$, $x_n^\prime \in N(x_n;C(p_n))$ such that 
	\begin{equation}\label{formula:contradiction}
		\forall_{n\in \mathbb{N}}\quad x_n^\prime \ \text{can not be represented in the form \eqref{eq:representation2}}.
	\end{equation}
	By \eqref{representation:x_prime}, for all $n\in \mathbb{N}$, sufficiently large, $x_n^\prime$ can be represented in the form
	\begin{equation}\label{respresntation:normal}
		x_n^\prime = \sum_{i\in I_1}\lambda_i^n g_i(p_n)+\sum_{i \in I_{p_n}(x_n)\cap I_2} \lambda_i^n g_i(p_n),
	\end{equation}
	where $\lambda_i^n\geq 0$, $i\in I_{p_n}(x_n)\cap I_2$. We can rewrite \eqref{respresntation:normal} as 
	\begin{align*}
		x_n^\prime =& \sum_{i\in I_1}\lambda_i^n g_i(p_n)+
		\sum_{i\in I_{p_n}(x_n)\cap \bar{K}} \lambda_i^n g_i(p_n)+\sum_{i\in (I_{p_n}(x_n)\setminus \bar{K})\cap I_2} \lambda_i^n g_i(p_n),
	\end{align*} 
	and, by putting $\lambda_i^n=0$ for $i \in \bar{K}\setminus I_{p_n}(x_n)$, $n\in \mathbb{N}$, we get
	\begin{equation*}
		x_n^\prime = \sum_{i\in I_1}\lambda_i^n g_i(p_n)+\sum_{i\in \bar{K}} \lambda_i^n g_i(p_n)+\sum_{i\in (I_{p_n}(x_n)\setminus \bar{K})\cap I_2} \lambda_i^n g_i(p_n),
	\end{equation*} 
	where $\lambda_i^n\geq 0$, $i\in (I_{p_n}(x_n)\cap I_2)\cup \bar{K}$.
	By Lemma \ref{lemma:positive_combination2} (see Appendix), for all $n\in \mathbb{N}$, sufficiently large, there exist $\hat{I}_2^n\subset (I_{p_n}(x_n)\setminus \bar{K})\cap I_2$ and $\tilde{\lambda}_i^n$, $i\in I_1\cup \bar{K}\cup \hat{I}_2^n$, $\tilde{\lambda}_i^n\in \mathbb{R}$,\ $i\in I_1\cup \bar{K}$, $\tilde{\lambda}_i^n>0$, $i\in \hat{I}_2^n$ such that
	\begin{equation*}
		x_n^\prime = \sum_{i\in I_1}\tilde{\lambda}_i^n g_i(p_n)+\sum_{i\in \bar{K}} \tilde{\lambda}_i^n g_i(p_n)+\sum_{i\in \hat{I}_2^n} \tilde{\lambda}_i^n g_i(p_n),
	\end{equation*} 	
	where $\tilde{\lambda}_i\in \mathbb{R}$, $i\in I_1\cup \bar{K}$, $\tilde{\lambda}_i> 0 $, $i\in \hat{I}_2^n$ and $g_i(p_n)$, $i\in I_1\cup \bar{K}\cup \hat{I}_2^n$ are linearly independent.
	
	By passing to a subsequence, if necessary, we can assume that  $\hat{I}_2^n=:I_2^\prime$ and 
	\begin{equation}\label{respresntation:normal4}
		x_n^\prime = \sum_{i\in I_1}\tilde{\lambda}_i^n g_i(p_n)+\sum_{i\in \bar{K}} \tilde{\lambda}_i^n g_i(p_n)+\sum_{i\in I_2^\prime} \tilde{\lambda}_i^n g_i(p_n),
	\end{equation}
	where $\tilde{\lambda}_i^n$, $\tilde{\lambda}_i^n\in \mathbb{R}$, $i\in I_1\cup \bar{K}$, $\tilde{\lambda}_i^n> 0$, $i\in I_2^\prime$ and $g_i(p_n)$, $i\in I_1\cup \bar{K}\cup I_2^\prime$ are linearly independent. 
	
	By \eqref{formula:contradiction}, it must be $\tilde{\lambda}_i^n\leq 0$ for some $i_n\in \bar{K}$. Passing again to the subsequence in \eqref{respresntation:normal4}, if necessary, we conclude that there exists $i\in \bar{K}$ such that $\tilde{\lambda}_i^n\leq 0$. 
	
	On the other hand, by Lemma \ref{lemma:gramschmidt} (see Appendix), we have $\tilde{\lambda}_i^n\rightarrow \bar{\lambda}_i>0$, $i\in \bar{K}$, which leads to a contradiction. This proves \eqref{cond:i}.
	
	To prove \eqref{cond:ii} suppose there exist $\varepsilon>0$  and a sequence $\{i_n\}_{n\in\mathbb{N}}\subset I\cup \bar{K}\cup I_2^\prime$, such that in the representation \eqref{respresntation:normal4} for each $n\in \mathbb{N}$ one of the following holds:
	\begin{enumerate}
		\item $\tilde{\lambda}_{i_n}>\bar{\lambda}_i+\varepsilon$ and $i_n\in I_1\cup \bar{K}$,
		\item $\tilde{\lambda}_{i_n}<\bar{\lambda}_i-\varepsilon$ and $i_n\in I_1\cup \bar{K}$,
		\item $\tilde{\lambda}_{i_n}>\varepsilon$ and $i_n\in I_2^\prime$.
	\end{enumerate}
	By taking a subsequence of $\{x_n^\prime\}_{n\in\mathbb{N}}$, if necessary, one can  assume that  only one of the cases	 1., 2., 3. holds for all $n\in \mathbb{N}$.  On the other hand, by Lemma \ref{lemma:gramschmidt} (see Appendix), we have $\tilde{\lambda}_i^n\rightarrow \bar{\lambda}_i$, $i\in I_1\cup \bar{K}$ and $\tilde{\lambda}_i^n\rightarrow 0$, $i\in I_2^\prime$, which leads to a contradiction. This proves \eqref{cond:ii}.
\end{proof}

Recall that $\mbox{gph} N(\cdot;C(p))= \text{Gr}(p)$, $p\in \setD$.
It is clear that even if an element $\bar{v}-\bar{x}$ from the normal cone $N(\bar{x}; C(\bar{p}))$ has a unique representation  as a combination of some vectors $g_{i}(\bar{p})$, $i\in I_{\bar{p}}(\bar{x})$,  then, in a neighbourhood of $(\bar{p},\bar{x},\bar{v}-\bar{x})$, where $\bar{v}-\bar{x}\in N(\bar{x};C(\bar{p}))$, the elements $x^\prime\in N(x; C(p))$ may not have unique representations in terms of combinations of vectors
$g_{i}(p)$, $i\in I_{p}(x)$.

The example below illustrates the situation when the representation of $\bar{v}-\bar{x}$ is not  
unique.
\begin{example}\label{example:representation_not_unique}
	Let $p\in \mathbb{R}$, $\bar{v}=(0,1)$ and
	\begin{equation*}
		C(p)=\left\{ x\in \mathbb{R}^2 \bigg| \begin{array}{l}
			\langle x \ | \ (0,1) \rangle \leq 0\\
			\langle x - (|p|,0) \ | \ (1,0) \rangle \leq 0\\
			\langle x - (-|p|,0) \ |\ (-1,0) \rangle \leq 0
		\end{array}  \right\}
	\end{equation*}
	In this case $P(\bar{v},p)=(0,0):=\bar{x}$ for all $p\in \mathbb{R}$. However, for $\bar{p}=0$ we have $I_{\bar{p}}(\bar{x})=\{1,2,3\}$ and
	\begin{equation*} 
		\bar{v}-\bar{x}=1\cdot (0,1)+0\cdot(1,0)=1\cdot (0,1)+0\cdot(-1,0)
	\end{equation*}
	and for any $p\neq 0$, $x=(p,0)$, $x^\prime \in N(x;C(p))$ we have
	\begin{equation*}
		\begin{array}{lll}
			x^\prime = \lambda_1 \cdot (0,1)+\lambda_2 (1,0),& \lambda_1,\ \lambda_2\geq 0, & p>0\\
			x^\prime = \lambda_1 \cdot (0,1)+\lambda_2 (-1,0),& \lambda_1,\ \lambda_2\geq 0,& p<0.
		\end{array}
	\end{equation*}	
\end{example}

In the proposition below we show, that under assumptions appearing in Theorem \ref{theorem:main} we have
$\gph N(\cdot; C(p))\cap V(\bar{x},\bar{v}-\bar{x})\neq \emptyset$ for $(p,x,x^\prime)$ in some neighbourhood of  $(\bar{p},\bar{x},\bar{v}-\bar{x})$.

\begin{proposition}
	Let $\bar{p}\in \setD$, $\bar{v}\notin C(\bar{p})$ and $\bar{x}=P(\bar{v},\bar{p})$. Assume that RCRCQ holds at $(\bar{p},\bar{x})$ (with a neighbourhood $U_0(\bar{p})$) for multifunction $\multifC$, given by \eqref{multifunction_C}, and $\bar{x}\in \liminf\limits_{p\rightarrow\bar{p},\ p\in \setD} \multifC(p)$. Then
	\begin{equation*}
		\forall V(\bar{x},\bar{v}-\bar{x}) \ \exists U(\bar{p}) \ \forall p \in U(\bar{p})\quad \gph N(\cdot; C(p))\cap V(\bar{x},\bar{v}-\bar{x})\neq \emptyset,		
	\end{equation*}
	i.e., $(\bar{x},\bar{v}-\bar{x})\in \liminf\limits_{p\rightarrow\bar{p},\ p\in \setD} \gph N(\cdot, C(p))=\liminf\limits_{p\rightarrow\bar{p},\ p\in \setD} Gr(p)$.
\end{proposition}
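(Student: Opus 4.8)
The plan is to produce, for every $p$ near $\bar{p}$, an explicit point of $Gr(p)$ that converges to $(\bar{x},\bar{v}-\bar{x})$, and the natural candidate is the projection itself. Since $\bar{x}=P(\bar{v},\bar{p})$, the variational characterization of the metric projection gives $\bar{v}-\bar{x}\in N(\bar{x};C(\bar{p}))$, so that $(\bar{x},\bar{v}-\bar{x})\in Gr(\bar{p})$ to begin with. More generally, whenever $C(p)\neq\emptyset$ the pair $(x_p,x_p'):=(P(\bar{v},p),\,\bar{v}-P(\bar{v},p))$ satisfies $x_p'\in N(x_p;C(p))$, hence $(x_p,x_p')\in Gr(p)$. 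The whole statement will thus follow once I show that $p\mapsto P(\bar{v},p)$ is continuous at $\bar{p}$, since then $x_p\to\bar{x}$ forces $x_p'\to\bar{v}-\bar{x}$ and the desired membership of the lower Kuratowski limit drops out of the sequential definition.

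First I would record that the hypothesis $\bar{x}\in\liminf_{p\to\bar{p}}\multifC(p)$ already guarantees $C(p)\neq\emptyset$ for all $p$ in some neighbourhood of $\bar{p}$ (take any neighbourhood of $\bar{x}$; it meets $C(p)$ for $p$ close to $\bar{p}$), so that $P(\bar{v},p)$ is well defined on that neighbourhood. Next, the two standing assumptions here, namely RCRCQ at $(\bar{p},\bar{x})$ and $\bar{x}\in\liminf_{p\to\bar{p}}\multifC(p)$, are precisely those of Corollary \ref{corollay:holderian}; invoking it furnishes a constant $\ell^0>0$ and neighbourhoods $W(\bar{v})$, $U(\bar{p})$ on which the H\"older-type estimate is valid.

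The key step is to specialize that estimate to $v_1=v_2=\bar{v}$ (note $\bar{v}\in W(\bar{v})$) and $p_1=p$, $p_2=\bar{p}$. The term $v_1-v_2$ then vanishes and the estimate collapses to
\begin{equation*}
	2\,\|P(\bar{v},p)-P(\bar{v},\bar{p})\|\le \ell^0\,\|p-\bar{p}\|^{1/2},
\end{equation*}
that is, $\|P(\bar{v},p)-\bar{x}\|\le\tfrac{\ell^0}{2}\|p-\bar{p}\|^{1/2}$. Hence $p\mapsto P(\bar{v},p)$ is H\"older continuous, in particular continuous, at $\bar{p}$, so $x_p:=P(\bar{v},p)\to\bar{x}$ and therefore $x_p':=\bar{v}-x_p\to\bar{v}-\bar{x}$ as $p\to\bar{p}$. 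Given any neighbourhood $V(\bar{x},\bar{v}-\bar{x})$, this convergence yields a neighbourhood $U(\bar{p})$ with $(x_p,x_p')\in V(\bar{x},\bar{v}-\bar{x})$ for all $p\in U(\bar{p})$; since $(x_p,x_p')\in Gr(p)$, this says exactly $Gr(p)\cap V(\bar{x},\bar{v}-\bar{x})\neq\emptyset$, i.e. $(\bar{x},\bar{v}-\bar{x})\in\liminf_{p\to\bar{p}}Gr(p)$.

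I do not expect a genuine obstacle: once Corollary \ref{corollay:holderian} is available, the argument reduces to a one-line specialization of the estimate together with the defining property of the projection. The only points needing care are the neighbourhood bookkeeping (checking that $\bar{v}$ and the relevant $p$ lie in the domains $W(\bar{v})$, $U(\bar{p})$ where the estimate holds) and the well-definedness of $P(\bar{v},p)$, both of which are immediate from the $\liminf$ hypothesis. A more hands-on alternative would be to start from a representation $\bar{v}-\bar{x}=\sum_{i\in I_1\cup\bar{K}}\bar{\lambda}_i g_i(\bar{p})$ with the $g_i(\bar{p})$ linearly independent, set $x_p':=\sum_{i\in I_1\cup\bar{K}}\bar{\lambda}_i g_i(p)$, and solve the active affine system for a feasible $x_p\to\bar{x}$; this is correct but more delicate than the H\"older route, so I would keep the projection-based proof as the main line.
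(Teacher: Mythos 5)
Your proof is correct and follows essentially the same route as the paper's: both take the pair $(P(\bar{v},p),\,\bar{v}-P(\bar{v},p))\in Gr(p)$ as the witness and invoke Corollary \ref{corollay:holderian} to get $P(\bar{v},p)\to\bar{x}$ as $p\to\bar{p}$. The only difference is presentational — the paper phrases the argument as a proof by contradiction along a sequence $p_n\to\bar{p}$, while you argue directly (and also spell out the well-definedness of $P(\bar{v},p)$ via the $\liminf$ hypothesis, which the paper leaves implicit).
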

\begin{proof}
	By contrary, suppose  that
	there exist a neighbourhood $V(\bar{x},\bar{v}-\bar{x})$ and a sequence $p_n\rightarrow \bar{p}$ such that 	$$\gph N(\cdot; C(p_n))\cap V(\bar{x},\bar{v}-\bar{x})= \emptyset.$$ Without loss of generality we may assume that $p_n\in U_0(\bar{p})$. 
	Let $x_n:=P(\bar{v},p_n)$. 
	Then, by Corollary \ref{corollay:holderian}, we have $x_n\rightarrow \bar{x}$. Moreover, $x_n^\prime :=\bar{v}-x_n\in N(x_n;C(p_n))$ and $x_n^\prime\rightarrow \bar{v}-\bar{x}$. Thus, $(x_n,x_n^\prime)\in \gph N(\cdot; C(p_n))\cap V(\bar{x},\bar{v}-\bar{x})$ for large $n$.
\end{proof}

Let us recall that the set of Lagrange multipliers associated with problem \eqref{M(v,p)} is defined as
\begin{equation*}
	\Lambda_{v}(p,x):=\{ \lambda\in\mathbb{R}^m\ |\ v-x=\sum_{i\in I_1\cup I_2} \lambda_i g_i(p)  \text{ where, for } i \in I_2,\ \lambda_i\geq 0,\ \lambda_i g_i(p)=0   \}
\end{equation*}
and for $M>0$ let
\begin{equation*}
	\Lambda_{v}^M(p,x):=\{\lambda \in \Lambda_{v}(p,x) \mid \sum_{i\in I_1\cup I_2}|\lambda_i| \leq M \}.
\end{equation*}
In proposition below we show that $\Lambda_{{v}}^M(p,P(v,p))\neq\emptyset$ in some neighbourhood of $(\bar{p},\bar{v})$ under  RCRCQ and the Kuratowski limit conditions.
\begin{proposition}\label{proposition:noneptiness_lagrange}
	Suppose that $\bar{v}\notin C(\bar{p})$. Assume that RCRCQ holds at $(\bar{p},P(\bar{v},\bar{p}))$ (with a neighbourhood $U_0(\bar{p})$) for multifunction $\multifC$, given by \eqref{multifunction_C}, and  $P(\bar{v},\bar{p})\in \liminf\limits_{p\rightarrow \bar{p},\ p\in \setD} \multifC(p)$.
	Let the formula \eqref{eq:representation1} hold, i.e.,
	\begin{equation*}
		\bar{v}-P(\bar{v},\bar{p})=\sum_{i \in I_1\cup \bar{K}} \bar{\lambda}_i \fung_i (\bar{p}),\ \text{where}\  \bar{\lambda}_i>0,\ i\in \bar{K}\subset I_{\bar{p}}(P(\bar{v},\bar{p}))\cap I_2,
	\end{equation*}  and  $g_i(\bar{p})$, $i\in I_1\cup \bar{K}$ are linearly independent. There exist  neighbourhoods $U_1(\bar{p})$, $V_1(0)$ and $M>0$ such that for all $p\in U_1(\bar{p})$ and $v\in \bar{v}+V_1(0)$ we have
	\begin{equation}\label{ineq:bounded_lambdas}
		\Lambda_{v}^M(p,P(v,p))\neq \emptyset.
	\end{equation}
\end{proposition}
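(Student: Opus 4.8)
The plan is to combine the structural representation of normal cone elements obtained in Proposition \ref{lemma:indices_RCRCQ} with the Hölder continuity estimate of Corollary \ref{corollay:holderian} to produce a uniformly bounded Lagrange multiplier for every nearby $(v,p)$. First I would observe that, since $P(\cdot,\cdot)$ is continuous at $(\bar{v},\bar{p})$ by Corollary \ref{corollay:holderian} (the Hölder estimate forces $P(v,p)\to\bar{x}=P(\bar{v},\bar{p})$ as $(v,p)\to(\bar{v},\bar{p})$), I can shrink neighbourhoods $V_1(0)$ of $0\in\hilbertH$ and $U_1(\bar{p})$ of $\bar{p}$ so that, for every $v\in\bar{v}+V_1(0)$ and $p\in U_1(\bar{p})$, the pair $(x,x')$ with $x:=P(v,p)$ and $x':=v-x$ satisfies $x\in\bar{x}+V(0)$ and $x'\in\bar{v}-\bar{x}+V(0)$, where $V(0)$ is the neighbourhood furnished by Proposition \ref{lemma:indices_RCRCQ}. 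By the variational characterization \eqref{variational_ineq1}, $x'=v-x\in N(x;C(p))$, so $(x,x')\in\mbox{gph}N(\cdot;C(p))$ lies in the required neighbourhood.

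Next I would invoke part \eqref{cond:i} of Proposition \ref{lemma:indices_RCRCQ} directly: it supplies an index set $L\subset(I_p(x)\cap I_2)\setminus\bar{K}$ and coefficients giving
\begin{equation*}
	x'=\sum_{i\in I_1}\lambda_i g_i(p)+\sum_{i\in\bar{K}}\lambda_i g_i(p)+\sum_{i\in L}\lambda_i g_i(p),
\end{equation*}
with $\lambda_i>0$ for $i\in\bar{K}$ and $\lambda_i\geq 0$ for $i\in L$. Setting $\lambda_i:=0$ for the remaining indices in $(I_1\cup I_2)\setminus(I_1\cup\bar{K}\cup L)$ yields a vector $\lambda\in\mathbb{R}^m$ with $\lambda_i\geq 0$ for $i\in I_2$ and $\lambda_i g_i(p)=0$ whenever $i\in I_2\setminus I_p(x)$ (the complementary slackness condition holds because every index carrying a nonzero multiplier lies in $I_p(x)$, where $\langle x\mid g_i(p)\rangle=f_i(p)$). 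Hence $\lambda\in\Lambda_v(p,P(v,p))$, which shows nonemptiness before the bound.

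To get the uniform bound $M$, I would apply part \eqref{cond:ii} of Proposition \ref{lemma:indices_RCRCQ} with a fixed choice of $\varepsilon$, say $\varepsilon=1$: shrinking $U_1(\bar{p})$ and $V_1(0)$ further, the coefficients satisfy $\bar{\lambda}_i-1\leq\lambda_i\leq\bar{\lambda}_i+1$ for $i\in I_1\cup\bar{K}$ and $0\leq\lambda_i\leq 1$ for $i\in L$. Since $L\subset I_2\setminus\bar{K}$ has cardinality at most $|I_2|$, this gives
\begin{equation*}
	\sum_{i\in I_1\cup I_2}|\lambda_i|\leq\sum_{i\in I_1\cup\bar{K}}(|\bar{\lambda}_i|+1)+|I_2|=:M,
\end{equation*}
a constant independent of $v$ and $p$. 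Therefore $\lambda\in\Lambda_v^M(p,P(v,p))$, establishing \eqref{ineq:bounded_lambdas}.

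The main obstacle I anticipate is bookkeeping rather than conceptual depth: one must carefully verify that the coefficient vector produced by Proposition \ref{lemma:indices_RCRCQ}, after padding with zeros, genuinely satisfies the complementary slackness clause $\lambda_i g_i(p)=0$ in the definition of $\Lambda_v(p,x)$, and that the neighbourhoods can be simultaneously shrunk so that both the representation \eqref{eq:representation2} applies and $P(v,p)$ stays in range. The heavy lifting—continuity of the multiplier selection and the uniform estimate on coefficients—has already been done in Proposition \ref{lemma:indices_RCRCQ}, so this proposition is essentially a packaging of those conclusions into the language of the multiplier sets $\Lambda_v^M$.
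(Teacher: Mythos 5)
Your proposal follows essentially the same route as the paper's own proof: both use the continuity of $P$ at $(\bar{v},\bar{p})$ coming from the H\"older estimate (Corollary \ref{corollay:holderian}, i.e.\ Theorem \ref{theorem:main_section_RCRCQ} combined with Proposition \ref{proposition:holderian}) to place the pair $\bigl(P(v,p),\,v-P(v,p)\bigr)\in\gph N(\cdot;C(p))$ inside the neighbourhood required by Proposition \ref{lemma:indices_RCRCQ}, and then read off a multiplier with uniformly bounded $\ell_1$-norm from parts \eqref{cond:i}--\eqref{cond:ii} of that proposition after padding with zeros. The only differences are cosmetic: you fix $\varepsilon=1$ where the paper keeps a general $\varepsilon$ in defining $M$, and you make explicit the complementary-slackness verification that the paper leaves implicit.
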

\begin{proof}
	Let $\varepsilon>0$. By Proposition \ref{lemma:indices_RCRCQ}, there exist neighbourhoods $U(\bar{p})$, $V(0)$ such that for every $p\in U(\bar{p})$ 
	and any $(x,x^\prime)\in \mbox{gph} N(\cdot;C(p))\cap (\bar{x}+V(0),\bar{v}-\bar{x}+V(0))$, 
	there exists ${L}\subset (I_p(x)\cap I_2)\setminus K \subset (I_{\bar{p}}(\bar{x})\cap I_2)\setminus K$  such that
	the formula \eqref{eq:representation2} holds i.e.,
	\begin{align*}
		& x^\prime=\sum_{i\in I_1} {\lambda}_i \fung_i(p)+\sum_{i\in \bar{K}} {\lambda}_i \fung_i(p)+\sum_{i\in {L}} {\lambda}_i \fung_i(p),\\
		&  \lambda_i> 0,\ i \in \bar{K},\ \lambda_i\geq 0,\ i\in  L,
	\end{align*}	
	where $g_i(p)$ $i\in I_1\cup \bar{K}\cup {L}$ are linearly independent 
	and additionally
	\begin{align}\label{ineq:lambdas2}
		\begin{aligned}
			& \bar{\lambda}_i-\varepsilon\leq {\lambda}_i \leq \bar{\lambda}_i+\varepsilon\quad \forall i \in I_1\cup\bar{K},\\ 
			& 0\leq {\lambda}_i \leq \varepsilon\quad \forall i \in {L}.
		\end{aligned}
	\end{align} 
	Let $V_1(0)$ be such that $V_1(0)\subset \frac{1}{2}V(0)$. By the continuity of  $P(\cdot,\cdot)$ at $(\bar{v},\bar{p})$ (see Theorem \ref{theorem:main_section_RCRCQ} and Proposition \ref{proposition:holderian}), there exist neighbourhoods $U_2(\bar{p})$, $V_2(0)\subset \frac{1}{2}V(0)$ such that
	\begin{equation*}
		P(v,p)\subset P(\bar{v},\bar{p})+V_1(0)
	\end{equation*}
	for all $p\in U_2(\bar{p})$, $v\in \bar{v}+V_2(0)$. Hence,
	\begin{equation*}
		v-P(v,p)\in  \bar{v}+V_2(0) - P(\bar{v},\bar{p})+ V_1(0)\subset \bar{v}-P(\bar{v},\bar{p})+ V(0).
	\end{equation*}
	Let $U_1(\bar{p}):=U(\bar{p})\cap U_2(\bar{p})$. Then for all $p\in U_1(\bar{p})$, $v\in \bar{v}+V(0)$ there exists ${L}\subset I_{\bar{p}}(P(\bar{v},\bar{p}))\cap I_2$ ($L\subset (I_p(v)\cap I_2)\setminus K$) such that
	\begin{align*}
		& P(v,p)-v=\sum_{i\in I_1} {\lambda}_i \fung_i(p)+\sum_{i\in \bar{K}} {\lambda}_i \fung_i(p)+\sum_{i\in {L}} {\lambda}_i \fung_i(p),\\
		&  \lambda_i> 0,\ i \in \bar{K},\ \lambda_i\geq 0,\ i\in  L
	\end{align*}	
	and \eqref{ineq:lambdas2} holds. This means that for  all $p\in U_1(\bar{p})$, $v\in \bar{v}+V(0)$, 
	\begin{equation*}
		\sum_{i\in I_{p}(P(v,p))} |\lambda_i| < \sum_{i\in I_{\bar{p}}(P(\bar{v},\bar{p})) } (|\bar{\lambda}_i|+\varepsilon)=:M,
	\end{equation*}
	i.e. \eqref{ineq:bounded_lambdas} holds.	
\end{proof}

\section{Stable representations}\label{section:stable_representations}
As already noted in Example \ref{example:representation_not_unique},
a number of different index sets $\bar{K}$ could be used in \eqref{eq:representation1}. On the other hand, the set of those index sets $\bar{K}$ for which \eqref{eq:representation1} holds is nonempty (may consists of the empty set only).

\begin{definition}\label{def:equivalent}
	Let $\multifC:\ \setD \rightrightarrows \hilbertH$, be given by \eqref{multifunction_C}. We say that $\multifR:\ \setD \rightrightarrows \hilbertH$ is an \textit{equivalent representation} of $\multifC$, if $R(p)=C(p)$ for all $p\in \setD$  and $R$ is given as
	\begin{equation*}
		R(p):=\left\{ x\in \hilbertH\ \bigg|\ \begin{array}{ll}
			\langle x \ |\ \tilde{g}_i(p) \rangle = \tilde{f}_i(p),& i \in \tilde{I}_1\\
			\langle x \ |\ \tilde{g}_i(p) \rangle \leq  \tilde{f}_i(p),& i \in \tilde{I}_2
		\end{array}\right\},
	\end{equation*}
	where $\tilde{f}_i:\ \setD \rightarrow \mathbb{R}$, $\tilde{g}_i:\ \setD \rightarrow \hilbertH$, $i\in \tilde{I}_1\cup \tilde{I}_2$ are locally Lipschitz on $\setD$ and $\tilde{I}_1\cup \tilde{I}_2$ is a finite, nonempty set. For a given representation $\multifR$ of $\multifC$, we define $I_{{p}}^\multifR(x)=\{ i\in \tilde{I}_1\cup \tilde{I}_2 \mid \langle x \mid \tilde{g}_i(p) \rangle = \tilde{f}_i(p)  \}$, $p\in \setD$, $x\in \hilbertH$.
\end{definition}
Consider any equivalent representation $\multifR$ of $\multifC$. Let $\bar{p}\in \setD$, $\bar{x}\in R(\bar{p})=C(\bar{p})$, $I_{\bar{p}}^\multifR(\bar{x})\neq \emptyset$ and let the 
following formula (c.f., formula \eqref{eq:representation1})	holds		 
\begin{equation}\label{representation:corollary}
	\bar{v}-\bar{x}=\sum_{i \in \tilde{I}_1\cup \bar{K}} \bar{\lambda}_i \tilde{g}_i (\bar{p}),\  \bar{\lambda}_i>0,\ i\in \bar{K}\subset I_{\bar{p}}^\multifR(\bar{x})\cap \tilde{I}_2,
\end{equation}  where  $\tilde{g}_i(\bar{p})$, $i\in \tilde{I}_1\cup \bar{K}$ are linearly independent.

For a given representation $\multifR$, for any index set $L$\footnote{We also admit the case $L=\emptyset$.} satisfying
\begin{align}\label{conditions_setL}
	\begin{aligned}
		&L\subset (I_{\bar{p}}^\multifR(\bar{x})\cap \tilde{I}_2)\setminus\bar{K}\\
		&\tilde{g}_i(\bar{p}),\ i \in \tilde{I}_1\cup\bar{K}\cup L,\ \text{linearly independent}
	\end{aligned}
\end{align}
we define multifunction $\multifR_{\bar{K},L}:\ \setD \rightrightarrows \hilbertH$ as $\multifR_{\bar{K},L}(p):=R_{\bar{K},L}(p)$,
\begin{equation*}
	R_{\bar{K},L}(p):=\left\{ x\in \hilbertH\ \bigg|\ \begin{array}{ll}
		\langle x \ |\ \tilde{g}_i(p) \rangle = \tilde{f}_i(p),& i \in \tilde{I}_1\cup \bar{K}\cup L,\\
		\langle x \ |\ \tilde{g}_i(p) \rangle \leq  \tilde{f}_i(p),& i \in \tilde{I}_2\setminus (\bar{K}\cup L)
	\end{array}\right\}.
\end{equation*}
Note that $\bar{x}\in R_{\bar{K},L}(\bar{p})$ for any index set L satisfying \eqref{conditions_setL} and,  in general, $R_{\bar{K},L}(p)\neq C_{\bar{K},L}(p)$, $p\in \setD$.

\begin{definition}\label{def:stability}
	Let $\bar{p}\in \setD$, $\bar{v}\notin C(\bar{p})$  and $\bar{x}\in C(\bar{p})$. We say that multifunction $\multifC:\ \setD \rightrightarrows \hilbertH$, given by \eqref{multifunction_C}, has a \textit{stable representation} (in the sense of Kuratowski limit) at $(\bar{p},\bar{v},\bar{x})$ 
	if there exists an equivalent representation $\multifR$ of $\multifC$ for which there exists $\bar{K}\subset I_{\bar{p}}^\multifR(\bar{x})\cap \tilde{I}_2$ such that  \eqref{representation:corollary} holds and
	\begin{equation}\label{assumption:Kuratowski}
		\bar{x}\in \liminf\limits_{p\rightarrow \bar{p},\ p\in \setD}R_{\bar{K},L}(p) \quad \text{for any $L$ satisfying \eqref{conditions_setL}.}
	\end{equation}	
	We say that   $\multifR$ is a \textit{stable  representation} of $\multifC$  at $(\bar{p},\bar{v},\bar{x})$ if there exists $\bar{K}$ such that \eqref{representation:corollary} and \eqref{assumption:Kuratowski} hold.
\end{definition}
Let us note that if multifunction $\multifC:\ \setD \rightrightarrows \hilbertH$, given by \eqref{multifunction_C}, has a stable representation $\multifR$ at $(\bar{p},\bar{v},\bar{x})$ then $\bar{x}\in \liminf\limits_{p\rightarrow \bar{p},\ p\in \setD} C(p)=\liminf\limits_{p\rightarrow \bar{p},\ p\in \setD} R(p)$.

By Proposition \ref{lemma:indices_RCRCQ} and Theorem \ref{theorem:main_section_RCRCQ}, we obtain the following corollary.
\begin{corollary}\label{remark:RCRCQ_extended}
	Let $\bar{p}\in \setD$, $\bar{x}\in C(\bar{p})$, $I_{\bar{p}}(\bar{x})\neq \emptyset$ and $\bar{v}-\bar{x}\in N(\bar{x};C(\bar{p}))$. 
	Assume that there exists an equivalent representation $\multifR$ of $\multifC$ satisfying
	\begin{enumerate}[-]
		\item  RCRCQ holds for multifunction $\multifR$ at $(\bar{p},\bar{x})$,
		\item $\multifR$ is a stable representation  at $(\bar{p},\bar{v},\bar{x})$ with some set $\bar{K}\subset I_{\bar{p}}^\multifR(\bar{x})\cap \tilde{I}_2$. 
	\end{enumerate}
	There exists a constant $\ell>0$ such that for all $\varepsilon>0$ one can find neighbourhoods $U(\bar{p})$ and $V(0)$ satisfying
	
	\begin{enumerate}[(1)]
		\item\label{cond:1_corollary}  for any $p\in U(\bar{p})$ 
		and any $(x,x^\prime)\in \mbox{gph} N(\cdot;C(p))\cap (\bar{x}+V(0),\bar{v}-\bar{x}+V(0))$, it exists $\hat{L}\subset (I_{{p}}^\multifR({x})\cap \tilde{I}_2)\setminus \bar{K} \subset (I_{\bar{p}}^\multifR(\bar{x})\cap \tilde{I}_2)\setminus \bar{K}$ satisfying \eqref{conditions_setL} such that
		\begin{align*}
			& \exists\, {\lambda}_i,\ i \in \tilde{I}_1\cup\bar{K}\cup {\hat{L}}, \quad x^\prime = \sum_{i \in \tilde{I}_1\cup \bar{K}\cup {\hat{L}}} \lambda_i \tilde{g}_i(p),\\
			& \bar{\lambda}_i-\varepsilon\leq {\lambda}_i \leq \bar{\lambda}_i+\varepsilon\quad \forall i \in \tilde{I}_1\cup \bar{K},\\ 
			&  0< {\lambda}_i \leq \varepsilon\quad \forall i \in {\hat{L}},
		\end{align*}
		\item\label{cond:2_corollary}  for every $L$ satisfying \eqref{conditions_setL}, every $p_1,p_2\in U(\bar{p})$ and  every  $x_1\in (\bar{x}+V(0))\cap R_{\bar{K},L}(p_1)$ there exists $x_2\in R_{\bar{K},{L}}(p_2)$
		such that
		\begin{align*}
			&\|x_1-x_2\|\leq \ell\|p_1-p_2\|,
		\end{align*}
		i.e., the set-valued mapping $\multifR_{\bar{K},L}$ is Lipschitz-like at $(\bar{p},\bar{x})$.
	\end{enumerate}
	
\end{corollary}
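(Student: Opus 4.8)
The plan is to obtain conclusion \eqref{cond:1_corollary} from Proposition \ref{lemma:indices_RCRCQ} applied to the representation $\multifR$, and conclusion \eqref{cond:2_corollary} from Theorem \ref{theorem:main_section_RCRCQ} applied to each $\multifR_{\bar{K},L}$. First I would record the elementary consequences of $\multifR$ being an equivalent representation: $R(p)=C(p)$ for all $p\in\setD$, whence $\gph N(\cdot;C(p))=\gph N(\cdot;R(p))$ and $N(\bar{x};C(\bar{p}))=N(\bar{x};R(\bar{p}))\ni\bar{v}-\bar{x}$; moreover, since $\multifR$ is a stable representation, $\bar{x}\in\liminf_{p\to\bar{p}}R(p)$, so $R(p)=C(p)\neq\emptyset$ for $p$ near $\bar{p}$. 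Because $\multifR$ is a stable representation we also have $\bar{v}\notin C(\bar{p})$ and $I_{\bar{p}}^{\multifR}(\bar{x})\neq\emptyset$, and \eqref{representation:corollary} is precisely \eqref{eq:representation1} written for the data $\tilde{g}_i,\tilde{f}_i$ of $\multifR$. Hence every hypothesis of Proposition \ref{lemma:indices_RCRCQ} holds for $\multifR$.

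Applying parts \eqref{cond:i} and \eqref{cond:ii} of Proposition \ref{lemma:indices_RCRCQ} to $\multifR$ yields, for each $\varepsilon>0$, neighbourhoods $U(\bar{p})$, $V(0)$ such that every admissible $(x,x')$ admits a set $L\subset(I_p^{\multifR}(x)\cap\tilde{I}_2)\setminus\bar{K}$ with a representation of the form \eqref{eq:representation2} and the coefficient bounds of \eqref{cond:ii}, the vectors $\tilde{g}_i(p)$, $i\in\tilde{I}_1\cup\bar{K}\cup L$, being linearly independent. To match the statement of \eqref{cond:1_corollary} I would set $\hat{L}:=\{i\in L\mid\lambda_i>0\}$; discarding the vanishing coefficients leaves the representation unchanged and yields the strict inequalities $0<\lambda_i$ on $\hat{L}$. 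Finally, since $\tilde{I}_1\cup\bar{K}\cup L\subset I_{\bar{p}}^{\multifR}(\bar{x})$, the constant-rank property granted by RCRCQ transfers the linear independence of $\tilde{g}_i(p)$ at $p$ to linear independence of $\tilde{g}_i(\bar{p})$ at $\bar{p}$; passing to the subfamily indexed by $\tilde{I}_1\cup\bar{K}\cup\hat{L}$ shows that $\hat{L}$ satisfies \eqref{conditions_setL}. This establishes \eqref{cond:1_corollary}.

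For \eqref{cond:2_corollary} I would fix any $L$ satisfying \eqref{conditions_setL} and check the two hypotheses of Theorem \ref{theorem:main_section_RCRCQ} for $\multifR_{\bar{K},L}$ at $(\bar{p},\bar{x})$. The membership $\bar{x}\in R_{\bar{K},L}(\bar{p})$ was already noted, and the lower-limit hypothesis $\bar{x}\in\liminf_{p\to\bar{p}}R_{\bar{K},L}(p)$ is exactly the stability condition \eqref{assumption:Kuratowski}. The crux, and the step I expect to be the main obstacle, is verifying that RCRCQ holds for $\multifR_{\bar{K},L}$ at $(\bar{p},\bar{x})$. The point is that $\multifR_{\bar{K},L}$ uses the same vectors $\tilde{g}_i$ and right-hand sides $\tilde{f}_i$ as $\multifR$, differing only in the partition into equalities and inequalities; hence the active set is unchanged, $I_{\bar{p}}^{\multifR_{\bar{K},L}}(\bar{x})=I_{\bar{p}}^{\multifR}(\bar{x})$, while the equality block enlarges from $\tilde{I}_1$ to $\tilde{I}_1\cup\bar{K}\cup L$. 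Consequently every index set $J$ tested by Definition \ref{def:RCRCQ} for $\multifR_{\bar{K},L}$, namely $\tilde{I}_1\cup\bar{K}\cup L\subset J\subset I_{\bar{p}}^{\multifR}(\bar{x})$, also satisfies $\tilde{I}_1\subset J\subset I_{\bar{p}}^{\multifR}(\bar{x})$, so the family of index sets to be checked shrinks and the constant-rank condition \eqref{eq:RCRCQ} is inherited from RCRCQ of $\multifR$. Theorem \ref{theorem:main_section_RCRCQ} then gives Lipschitz-likeness of $\multifR_{\bar{K},L}$ at $(\bar{p},\bar{x})$.

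It remains to produce a single modulus $\ell$ and common neighbourhoods. Since the active set $I_{\bar{p}}^{\multifR}(\bar{x})$ is finite, there are only finitely many admissible $L$, so I would take $\ell$ to be the largest of the corresponding Lipschitz-like moduli. As Lipschitz-likeness is preserved under shrinking of neighbourhoods, intersecting the neighbourhoods from the finitely many applications of Theorem \ref{theorem:main_section_RCRCQ} with the $\varepsilon$-dependent neighbourhoods from \eqref{cond:1_corollary} furnishes the uniform constant $\ell$ and the common $U(\bar{p})$, $V(0)$ claimed in the statement.
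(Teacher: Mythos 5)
Your proposal is correct and follows essentially the same route as the paper's own proof: Proposition \ref{lemma:indices_RCRCQ} applied to $\multifR$ gives assertion \eqref{cond:1_corollary}, Theorem \ref{theorem:main_section_RCRCQ} applied to each $\multifR_{\bar{K},L}$ (whose RCRCQ and Kuratowski-limit hypotheses follow from RCRCQ of $\multifR$ and the stability condition \eqref{assumption:Kuratowski}) gives assertion \eqref{cond:2_corollary}, and finiteness of the admissible sets $L$ yields the uniform constant and neighbourhoods. Your additional details --- discarding zero coefficients to obtain strict positivity on $\hat{L}$, and using the constant-rank property to transfer linear independence from $p$ back to $\bar{p}$ so that $\hat{L}$ satisfies \eqref{conditions_setL} --- are precisely the steps the paper compresses into ``clearly'' and the bare citation of Proposition \ref{lemma:indices_RCRCQ}, and you fill them in correctly.
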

\begin{proof}
	Clearly, RCRCQ holds for any $\multifR_{\bar{K},L}$ at $(\bar{p},\bar{x})$, with $L$ satisfying \eqref{conditions_setL}. 
	
	By Proposition \ref{lemma:indices_RCRCQ} applied to $\multifR$, there exist neighbourhoods $U_1(\bar{p})$, $V_1(0)$ such that assertion \eqref{cond:1_corollary}  holds. 
	
	By Theorem \ref{theorem:main_section_RCRCQ} applied to $\multifR$ at $(\bar{p},\bar{x})$, for any $L$ satisfying \eqref{conditions_setL}, the multifunction $\multifR_{\bar{K},L}$ is Lipschitz-like at $(\bar{p},\bar{x})$ with neighbourhoods $U_L(\bar{p})$, $V_L(0)$ and constant $\ell_L>0$, i.e., assertion \eqref{cond:2_corollary} holds. 
	
	The existence of neighbourhoods $U(\bar{p})$, $V(0)$ and constant $\ell>0$ satisfying the assertion follows from the fact that there is a finite number of sets $L$ satisfying \eqref{conditions_setL}.

\end{proof}

In Theorem \ref{theorem:necessary} below we use the following assumption (H1).
\begin{enumerate}
	\item[(H1)] There 	exist an equivalent representation $\multifR$ of $\multifC$, given by \eqref{multifunction_C}, with 
	$$\bar{v}-P(\bar{v},\bar{p})=\sum_{i\in \tilde{I}_1\cup \bar{K}} \bar{\lambda}_i \tilde{g}_i(\bar{p}) $$
	where $\bar{\lambda}_i>0$, $i\in \bar{K}$,  $\tilde{g}_i(\bar{p})$, $i\in \tilde{I}_1\cup \bar{K}$ are linearly independent ($\bar{K}\subset I_2\cap I_{\bar{p}}^\multifR(P(\bar{v},\bar{p}))$),
	and neighbourhoods $U(\bar{p})$, $W(\bar{v})$ such that 
	\begin{enumerate}
		\item $\bar{K}\subset I_{p}^\multifR(P(v,p))$ for all $p\in U(\bar{p})$,  $v\in W(\bar{v})$, 
		\item for any $p_n\rightarrow \bar{p}$ and any $L\subset (\tilde{I}_2\cap I_{\bar{p}}^\multifR(P(\bar{v},\bar{p})))\setminus \bar{K}$ such that $\tilde{g}_i(\bar{p})$, $i\in \tilde{I}_1\cup \bar{K}\cup L$ are linearly independent   there exist sequence $v_n\rightarrow \bar{v}$, such that $ L \subset I_{p_n}^\multifR(P(v_n,p_n))$ for $n$ sufficiently large.
	\end{enumerate}	
\end{enumerate}

Below we show that if an equivalent representation $\multifR$ of $\multifC$ satisfies assumption (H1), then the stability of  $\multifR$ (in the sense of Definition \ref{def:stability}) is necessary for continuity of projection operator $P$.

\begin{theorem}\label{theorem:necessary}
	Let $\multifC:\ \setD \rightrightarrows \hilbertH$ be given by \eqref{multifunction_C}. Suppose that (H1)	 holds, i.e., there exists an equivalent representation $\multifR$ of $\multifC$ satisfying conditions (a) and (b). If projection $P:\ \hilbertG \times \setD\rightarrow \hilbertH$, with $P(\cdot,\cdot)$  given by \eqref{projection}, is continuous at $(\bar{v},\bar{p})\in \hilbertH \times \setD$, 
	then the representation $\multifR$ of $\multifC$ is stable at $(\bar{p},\bar{v},P(\bar{v},\bar{p}))$.
\end{theorem}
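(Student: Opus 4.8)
The plan is to take the index set $\bar{K}$ supplied by (H1) as the witness to stability in the sense of Definition \ref{def:stability}. Condition (H1) directly provides the expansion $\bar{v}-P(\bar{v},\bar{p})=\sum_{i\in\tilde{I}_1\cup\bar{K}}\bar{\lambda}_i\tilde{g}_i(\bar{p})$ with $\bar{\lambda}_i>0$ for $i\in\bar{K}$ and $\tilde{g}_i(\bar{p})$, $i\in\tilde{I}_1\cup\bar{K}$, linearly independent, so \eqref{representation:corollary} holds at once with this $\bar{K}$. Writing $\bar{x}:=P(\bar{v},\bar{p})$, it then remains only to verify the Kuratowski-limit condition \eqref{assumption:Kuratowski}, that is, $\bar{x}\in\liminf_{p\rightarrow\bar{p},\,p\in\setD}R_{\bar{K},L}(p)$ for every $L$ satisfying \eqref{conditions_setL}.

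Fix such an $L$ and, using the sequential description of the lower Kuratowski limit, take an arbitrary sequence $p_n\rightarrow\bar{p}$, $p_n\in\setD$; the goal is to produce $x_n\in R_{\bar{K},L}(p_n)$ with $x_n\rightarrow\bar{x}$. The key device is to recover $x_n$ as a projection of a perturbed point rather than of $\bar{v}$ itself. Since $\bar{x}=P(\bar{v},\bar{p})$, the set $L$ is admissible for (H1)(b) (the conditions $L\subset(\tilde{I}_2\cap I_{\bar{p}}^\multifR(\bar{x}))\setminus\bar{K}$ with $\tilde{g}_i(\bar{p})$, $i\in\tilde{I}_1\cup\bar{K}\cup L$, linearly independent coincide with \eqref{conditions_setL}). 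Hence (H1)(b) yields a sequence $v_n\rightarrow\bar{v}$ with $L\subset I_{p_n}^\multifR(P(v_n,p_n))$ for all large $n$. I set $x_n:=P(v_n,p_n)$, well defined for large $n$, and invoke the assumed continuity of $P$ at $(\bar{v},\bar{p})$ together with $v_n\rightarrow\bar{v}$, $p_n\rightarrow\bar{p}$ to obtain $x_n\rightarrow P(\bar{v},\bar{p})=\bar{x}$.

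To check $x_n\in R_{\bar{K},L}(p_n)$ I split the defining constraints. As $x_n\in C(p_n)=R(p_n)$, the equalities indexed by $\tilde{I}_1$ and all inequalities indexed by $\tilde{I}_2$ — in particular those for $i\in\tilde{I}_2\setminus(\bar{K}\cup L)$ — are automatically satisfied. The remaining constraints are the equalities indexed by $\bar{K}\cup L$, and these follow from the activity information in (H1): condition (a) gives $\bar{K}\subset I_{p_n}^\multifR(x_n)$ once $p_n\in U(\bar{p})$ and $v_n\in W(\bar{v})$ (true for large $n$), while the choice of $v_n$ via (b) gives $L\subset I_{p_n}^\multifR(x_n)$ for large $n$. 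Therefore $\bar{K}\cup L\subset I_{p_n}^\multifR(x_n)$, which is precisely the statement that the constraints indexed by $\bar{K}\cup L$ hold with equality at $x_n$. Thus $x_n\in R_{\bar{K},L}(p_n)$ for all large $n$ and $x_n\rightarrow\bar{x}$, establishing \eqref{assumption:Kuratowski} for this $L$. As $L$ was an arbitrary set satisfying \eqref{conditions_setL}, $\multifR$ is a stable representation of $\multifC$ at $(\bar{p},\bar{v},\bar{x})$.

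The argument is largely bookkeeping once the recovery sequence is identified; the one genuinely nontrivial point — and the reason the perturbed $v_n$, not the constant $\bar{v}$, must be used — is that membership in the face $R_{\bar{K},L}(p_n)$ requires the constraints in $L$ to be \emph{active}, whereas $P(\bar{v},p_n)$ need not lie on that face. Assumption (H1)(b) is exactly the hypothesis that permits perturbing $\bar{v}$ slightly so as to drive the projection onto the desired face while continuity still forces convergence to $\bar{x}$. I expect the only delicate step to be the correct matching of the quantifier structure of (H1)(b) (``for every $p_n\rightarrow\bar{p}$ there exists $v_n\rightarrow\bar{v}$'') with the sequential definition of the lower Kuratowski limit (``for every $p_n\rightarrow\bar{p}$ there exist recovery points $x_n$'').
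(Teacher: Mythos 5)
Your proof is correct and takes essentially the same route as the paper's: both rest on using (H1)(b) to produce perturbed points $v_n\rightarrow\bar{v}$ whose projections $x_n=P(v_n,p_n)$ lie in $R_{\bar{K},L}(p_n)$ (the activity of $\bar{K}\cup L$ coming from (H1)(a) and (H1)(b)), combined with the continuity of $P$ at $(\bar{v},\bar{p})$. The only difference is organizational: the paper phrases it as a proof by contradiction (a bad $L$ would give a sequence $p_n$ with $R_{\bar{K},L}(p_n)$ avoiding a fixed neighbourhood of $\bar{x}$, forcing $P(v_n,p_n)$ to stay away from $\bar{x}$ and contradicting continuity), whereas you run the identical construction directly to verify the lower Kuratowski limit condition for every admissible $L$.
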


\begin{proof}
	
	By contradiction suppose, that 
	representation $\multifR$ of $\multifC$ is not stable at
	$(\bar{p},\bar{v},P(\bar{v},\bar{p}))$, i.e, 
	for any $\tilde{K}$ such that
	\begin{equation}\label{eq:representation_conditions}
		\bar{v}-P(\bar{v},\bar{p})=\sum_{i \in \tilde{I}_1\cup \tilde{K}} \bar{\lambda}_i \tilde{g}_i (\bar{p}),\ \text{where}\ \tilde{\lambda}_i>0,\ i\in \tilde{K}\subset I_{\bar{p}}^\multifR(P(\bar{v},\bar{p}))\cap \tilde{I}_2
	\end{equation}  and  $\tilde{g}_i(\bar{p})$, $i\in 	\tilde{I}_1\cup \tilde{K}$ are linearly independent, there exists $\tilde{L}$ satisfying \eqref{conditions_setL} such that $P(\bar{v},\bar{p})\notin \liminf\limits_{p\rightarrow \bar{p},\ p\in \setD} R_{\tilde{K},\tilde{L}}(p)$. In particular, \eqref{eq:representation_conditions} holds for $\tilde{K}=\bar{K}$ and for any $\tilde{L}\subset (I_2\cap I_{\bar{p}}^\multifR(P(\bar{v},\bar{p})))\setminus \bar{K}$ such that $g_i(\bar{p})$, $i\in \tilde{I}_1\cup \bar{K}\cup \tilde{L}$ are linearly independent.

	By assumption that $P(\bar{v},\bar{p})\notin \liminf\limits_{p\rightarrow \bar{p},\ p\in \setD} R_{\bar{K},\tilde{L}}(p)$, there exists a neighbourhood $V(0)$ such that in every neighbourhood of $\bar{p}$ one can find element $p$ such that  $(P(\bar{v},\bar{p})+V(0))\cap R_{\bar{K},\tilde{L}}(p)=\emptyset$,
	i.e., there exists a sequence $p_n\rightarrow\bar{p}$ such that 
	$(P(\bar{v},\bar{p})+V(0))\cap R_{\bar{K},\tilde{L}}(p_n)=\emptyset$.
	
	Consider a sequence  $v_n\rightarrow \bar{v}$ satisfying condition (b) of (H1). 
	Then
	\begin{equation*}
		v_n-P(v_n,p_n)=\sum_{i \in  \tilde{I}_1\cup \bar{K}\cup \tilde{L}} \lambda_i^n \tilde{g}_i(p_n),\quad \lambda_i^n\geq 0, \ i\in \bar{K}\cup \tilde{L}.
	\end{equation*}
	This formula implies that $ \tilde{I}_1\cup \bar{K}\cup \tilde{L}\subset I_{p_n}^\multifR(P(v_n,p_n))$, $P(v_n,p_n)\in R_{\bar{K},\tilde{L}}(p_n)$ and $v_n-P(v_n,p_n)\in N(x_n,R_{\tilde{L}}(p_n))$. Thus $P(v_n,p_n)=P_{R_{\bar{K},\tilde{L}}(p_n)}(v_n)$. Hence, $P(v_n,p_n)\notin P(\bar{v},\bar{p})+ V(0)$, which means that $P(\cdot,\cdot)$ is not continuous at $(\bar{v},\bar{p})$.		
	
\end{proof}

\section{Main results}
\label{section:main_results}
In this section
we prove local Lipschitzness of projections onto moving closed convex sets $C(p)$ defined by \eqref{multifunction_C}.
In view of Theorem \ref{theorem:main_section_RCRCQ} in order to apply Theorem \ref{theorem:Mordukhovic_2} we need to investigate Lipschitz-likeness of the 	graphical subdifferential  mapping $Gr$ given by \eqref{mulfiunction:K}.

We start with the following technical fact.

\begin{proposition}\label{prop:eqivalent} Let $(\bar{p},\bar{x},\bar{v}-\bar{x})$ be such that $\bar{x}\in C(\bar{p})$, $\bar{v}-\bar{x}\in N(\bar{x};C(\bar{p}))$, and $I_{\bar{p}}(\bar{x})=\{i \in I_1\cup I_2\ |\ \langle \bar{x} \ |\ \fung_i(\bar{p})\rangle =\funf_i(\bar{p})  \}\neq \emptyset$.
	The following conditions are equivalent:
	\begin{enumerate}[(i)]
		\item \label{cond:1} The graphical subdifferential mapping $Gr:\ \setD\rightrightarrows\hilbertH\times \hilbertH$ defined as
		\begin{equation*}
			Gr(p)=\{ (x,x^\prime)\ |\ x\in C(p),\ x^\prime \in N(x;C(p))  \}
		\end{equation*}
		is Lipschitz-like around $(\bar{p},\bar{x},\bar{v}-\bar{x})$
		\item \label{cond:2} There exist $\ell>0$ and neighbourhoods $U(\bar{p})$, $V(0)$ in $\hilbertH$ such that
		\begin{align}
			& \forall\ p_1,p_2\in U(\bar{p})\notag\\
			& \forall\ x_1\in C(p_1)\cap (\bar{x}+V(0)),\ x_1^\prime \in N(x_1;C(p_1))\cap (\bar{v}-\bar{x}+V(0))\notag\\ 
			&\exists\ x_2\in C(p_2),\ x_2^\prime \in N(x_2;C(p_2))\ \text{satisfying}\notag\\
			& \|x_1-x_2\|\leq \ell \|p_1-p_2\|, \tag{a}\label{ineq:2.a}\\
			& \|x_1^\prime-x_2^\prime\|\leq \ell \|p_1-p_2\|.\tag{b}\label{ineq:2.b}
		\end{align}
	\end{enumerate}
\end{proposition}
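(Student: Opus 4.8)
The plan is to read this as a direct unpacking of Definition~\ref{definition_pseudo-Lipschitz} for the mapping $Gr:\ \setD\rightrightarrows\hilbertH\times\hilbertH$, using that on the product space $\hilbertH\times\hilbertH$ the norm is equivalent to, and controls, the two factor norms separately. The entire content of the equivalence is the translation between a single product-space neighbourhood $V(\bar{x},\bar{v}-\bar{x})$ carrying a joint bound (condition \eqref{cond:1}) and a product neighbourhood $(\bar{x}+V(0))\times(\bar{v}-\bar{x}+V(0))$ carrying the two componentwise bounds \eqref{ineq:2.a}, \eqref{ineq:2.b} (condition \eqref{cond:2}).

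For \eqref{cond:1}$\Rightarrow$\eqref{cond:2}, I would start from the neighbourhoods $U(\bar{p})$, $V(\bar{x},\bar{v}-\bar{x})$ and modulus $\ell$ furnished by the Lipschitz-likeness of $Gr$. Since $V(\bar{x},\bar{v}-\bar{x})$ is a neighbourhood of $(\bar{x},\bar{v}-\bar{x})$ in $\hilbertH\times\hilbertH$, I can pick a ball $V(0)$ about $0$ in $\hilbertH$ small enough that $(\bar{x}+V(0))\times(\bar{v}-\bar{x}+V(0))\subset V(\bar{x},\bar{v}-\bar{x})$. Then any $x_1\in C(p_1)\cap(\bar{x}+V(0))$ and $x_1^\prime\in N(x_1;C(p_1))\cap(\bar{v}-\bar{x}+V(0))$ give a pair $(x_1,x_1^\prime)\in Gr(p_1)\cap V(\bar{x},\bar{v}-\bar{x})$, so the defining inclusion produces $(x_2,x_2^\prime)\in Gr(p_2)$ with $\|(x_1,x_1^\prime)-(x_2,x_2^\prime)\|\le\ell\|p_1-p_2\|$ in the product norm. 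Because each factor norm is dominated by the product norm, this single estimate yields both \eqref{ineq:2.a} and \eqref{ineq:2.b}, while $(x_2,x_2^\prime)\in Gr(p_2)$ unpacks to $x_2\in C(p_2)$ and $x_2^\prime\in N(x_2;C(p_2))$.

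For the converse \eqref{cond:2}$\Rightarrow$\eqref{cond:1}, I would simply take $V(\bar{x},\bar{v}-\bar{x}):=(\bar{x}+V(0))\times(\bar{v}-\bar{x}+V(0))$, which is a neighbourhood of $(\bar{x},\bar{v}-\bar{x})$. For $(x_1,x_1^\prime)\in Gr(p_1)\cap V(\bar{x},\bar{v}-\bar{x})$ the hypotheses of \eqref{cond:2} hold, so there are $x_2\in C(p_2)$ and $x_2^\prime\in N(x_2;C(p_2))$ satisfying \eqref{ineq:2.a}, \eqref{ineq:2.b}. Adding the two bounds gives $\|(x_1,x_1^\prime)-(x_2,x_2^\prime)\|\le 2\ell\|p_1-p_2\|$ in the product norm, so that $(x_1,x_1^\prime)$ lies within distance $2\ell\|p_1-p_2\|$ of $Gr(p_2)$; this is exactly the Lipschitz-like inclusion of $Gr$ around $(\bar{p},\bar{x},\bar{v}-\bar{x})$, up to the open-ball adjustment noted below.

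I do not expect any genuine obstacle here; the care needed is purely in bookkeeping. The product norm on $\hilbertH\times\hilbertH$ enters only through the constant relating it to the two factor norms, so the modulus changes by a fixed factor but the existence statement is preserved. Moreover, Definition~\ref{definition_pseudo-Lipschitz} is stated with the \emph{open} ball $B(0,1)$ whereas the bounds \eqref{ineq:2.a}, \eqref{ineq:2.b} are non-strict; one absorbs this gap by passing to any strictly larger modulus when $p_1\neq p_2$ and by choosing $(x_2,x_2^\prime)=(x_1,x_1^\prime)$ when $p_1=p_2$, so the open-ball formulation is recovered without difficulty.
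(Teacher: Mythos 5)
Your proposal is correct and follows essentially the same route as the paper: both proofs are a direct unpacking of Definition~\ref{definition_pseudo-Lipschitz} for $Gr$, translating between a product-space neighbourhood of $(\bar{x},\bar{v}-\bar{x})$ with a joint norm bound and componentwise neighbourhoods with the bounds \eqref{ineq:2.a}, \eqref{ineq:2.b}, the only cosmetic difference being that the paper works with the $\|\cdot\|_1$ product norm and dismisses the converse as immediate, while you spell it out (including the open-ball adjustment).
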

\begin{proof}
	By \eqref{cond:1}, there exist neighbourhoods $U(\bar{p})$, $V(0)$ such that for every $(p_1,p_2)\in U(\bar{p})$
	\begin{align*}
		\begin{aligned}
			\gph N(\cdot;C(p_1))\cap &(\bar{x}+V(0),\bar{v}-\bar{x}+V(0))\\
			&\subset \gph N(\cdot;C(p_2))+\ell \|p_1-p_2\|B(0,1),
		\end{aligned}
	\end{align*}
	i.e., for all $(x_1,x_1^\prime) \in \gph N(\cdot;C(p_1))\cap (\bar{x}+V(0),\bar{v}-\bar{x}+V(0))$ there exists $(x_2,x_2^\prime)\in \gph N(\cdot;C(p_2)) $  such that
	\begin{equation*}
		(x_1,x_1^\prime) \in (x_2,x_2^\prime)+\ell \|p_1-p_2\|B(0,1),
	\end{equation*}
	where $B(0,1)\subset \hilbertH\times \hilbertH$   is the open unit ball in
	$\hilbertH\times \hilbertH$. Hence, 
	\begin{equation}\label{inc:3}
		\|x_1-x_2\|+\|x_1^\prime-x_2^\prime\|\leq \ell \|p_1-p_2\|,
	\end{equation}
	where $x_{1}\in C(p_{1})$ and $x_{2}\in C(p_{2})$, $x_{1}'\in N(x_{1};C(p_{1}))$, $x_{2}'\in N(x_{2};C(p_{2}))$,
	which implies \eqref{ineq:2.a} and \eqref{ineq:2.b}.
	The converse implication is immediate.
\end{proof}
\begin{remark}
	Let us note that in \eqref{inc:3} we use the norm $\|\cdot\|_1$ in the Cartesian product $\hilbertH\times \hilbertH$. Clearly, any other equivalent norm can be used at this point.
\end{remark}

Let $\bar{p}\in \setD$, $\bar{v}\notin C(\bar{p})$ and $\bar{x}=P(\bar{v},\bar{p})$.  
From \cite[Theorem 6.41]{deutsch2001best} (see also \cite{closedform}) the following representation holds
\begin{equation*}
	\bar{v}-\bar{x}=\sum_{i \in {I_{\bar{p}}}(\bar{x})}\hat{\lambda}_i \fung_i(\bar{p})\quad \text{with}\  \hat{\lambda}_i\geq 0\ \text{for}\ i \in I_{\bar{p}}(\bar{x})\cap I_2.
\end{equation*}

In the 	proposition below we give sufficient conditions for the graphical subdifferential mapping Gr given by \eqref{mulfiunction:K} to be Lipschitz-like at $(\bar{p},\bar{x},\bar{v}-\bar{x})$.

\begin{proposition}\label{prop:2ab2}
	Let $\bar{p}\in \setD$,  $\bar{v}\notin C(\bar{p})$.
	Assume that there exists an equivalent stable representation $\multifR$ of $\multifC$ at $(\bar{p},\bar{v},P(\bar{v},\bar{p}))$, given by \eqref{multifunction_C}, (with  set $\bar{K}$)  and RCRCQ holds for $\multifR$  at $(\bar{p},P(\bar{v},\bar{p}))$
	Then the graphical subdifferential mapping Gr, given by \eqref{mulfiunction:K}, is Lipschitz-like at $(\bar{p},P(\bar{v},\bar{p}),\bar{v}-P(\bar{v},\bar{p}))$.
\end{proposition}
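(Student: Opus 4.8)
The plan is to verify condition \eqref{cond:2} of Proposition \ref{prop:eqivalent}, which by that proposition is equivalent to the Lipschitz-likeness of $Gr$ required in \eqref{cond:1}. Write $\bar{x}=P(\bar{v},\bar{p})$. Since $\multifR$ is an equivalent representation of $\multifC$, we have $C(p)=R(p)$ and hence $N(\cdot;C(p))=N(\cdot;R(p))$ for every $p\in\setD$, so it suffices to produce, for $p_1,p_2$ near $\bar{p}$ and for any $x_1\in C(p_1)\cap(\bar{x}+V(0))$ and $x_1'\in N(x_1;C(p_1))\cap(\bar{v}-\bar{x}+V(0))$, a pair $x_2\in C(p_2)$, $x_2'\in N(x_2;C(p_2))$ obeying \eqref{ineq:2.a} and \eqref{ineq:2.b}. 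The hypotheses of Corollary \ref{remark:RCRCQ_extended} are exactly those assumed here, so I fix $\varepsilon>0$ with $\varepsilon<\min_{i\in\bar{K}}\bar{\lambda}_i$ and obtain from it a constant $\ell>0$ and neighbourhoods $U(\bar{p})$, $V(0)$ for which assertions \eqref{cond:1_corollary} and \eqref{cond:2_corollary} hold.

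Fix $p_1,p_2\in U(\bar{p})$ and admissible $x_1,x_1'$ as above. Applying \eqref{cond:1_corollary} at $p_1$ to the pair $(x_1,x_1')$ yields an index set $\hat{L}\subset(I_{p_1}^{\multifR}(x_1)\cap\tilde{I}_2)\setminus\bar{K}$ satisfying \eqref{conditions_setL}, together with coefficients $\lambda_i$ such that $x_1'=\sum_{i\in\tilde{I}_1\cup\bar{K}\cup\hat{L}}\lambda_i\tilde{g}_i(p_1)$, where $\lambda_i\geq\bar{\lambda}_i-\varepsilon>0$ for $i\in\bar{K}$ and $0<\lambda_i\leq\varepsilon$ for $i\in\hat{L}$. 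The key claim is that $x_1\in R_{\bar{K},\hat{L}}(p_1)$: the indices $\hat{L}$ are active at $(p_1,x_1)$ by construction, the strict positivity of the multipliers on $\bar{K}$ forces the constraints in $\bar{K}$ to be active as well (so that $\bar{K}\cup\hat{L}\subset I_{p_1}^{\multifR}(x_1)$), and the remaining inequalities defining $R_{\bar{K},\hat{L}}$ hold because $x_1\in R(p_1)$. Granting this, I invoke the Lipschitz-likeness of $\multifR_{\bar{K},\hat{L}}$ furnished by \eqref{cond:2_corollary} with $L=\hat{L}$ to obtain $x_2\in R_{\bar{K},\hat{L}}(p_2)$ with $\|x_1-x_2\|\leq\ell\|p_1-p_2\|$, which is \eqref{ineq:2.a}; moreover $R_{\bar{K},\hat{L}}(p_2)\subset R(p_2)=C(p_2)$, so indeed $x_2\in C(p_2)$.

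It remains to build $x_2'$. I set $x_2':=\sum_{i\in\tilde{I}_1\cup\bar{K}\cup\hat{L}}\lambda_i\tilde{g}_i(p_2)$, reusing the multipliers $\lambda_i$ produced above. Since $x_2\in R_{\bar{K},\hat{L}}(p_2)$, every index of $\bar{K}\cup\hat{L}\subset\tilde{I}_2$ is active at $x_2$ while the indices $\tilde{I}_1$ are equality indices, and the signs $\lambda_i\geq 0$ on $\bar{K}\cup\hat{L}$ are the correct ones, so the polyhedral normal-cone characterization gives $x_2'\in N(x_2;R(p_2))=N(x_2;C(p_2))$. For \eqref{ineq:2.b} I estimate $\|x_1'-x_2'\|\leq\sum_{i\in\tilde{I}_1\cup\bar{K}\cup\hat{L}}|\lambda_i|\,\|\tilde{g}_i(p_1)-\tilde{g}_i(p_2)\|$; the multipliers are uniformly bounded (by $|\bar{\lambda}_i|+\varepsilon$ on $\tilde{I}_1\cup\bar{K}$ and by $\varepsilon$ on $\hat{L}$) and each $\tilde{g}_i$ is locally Lipschitz near $\bar{p}$, whence $\|x_1'-x_2'\|\leq\ell'\|p_1-p_2\|$ for a constant $\ell'$ independent of the chosen data (the number of admissible sets $\hat{L}$ being finite). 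Replacing $\ell$ by $\max\{\ell,\ell'\}$ yields both \eqref{ineq:2.a} and \eqref{ineq:2.b}, so condition \eqref{cond:2} of Proposition \ref{prop:eqivalent} holds and $Gr$ is Lipschitz-like at $(\bar{p},\bar{x},\bar{v}-\bar{x})$.

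I expect the delicate step to be the claim $x_1\in R_{\bar{K},\hat{L}}(p_1)$, that is, the activity of $\bar{K}$ at the \emph{arbitrary} nearby point $x_1$ rather than only at the base point $\bar{x}$; this is precisely what licenses the use of \eqref{cond:2_corollary}. The activity of $\hat{L}$ is automatic, but for $\bar{K}$ one must combine the strict positivity $\lambda_i>0$ of its multipliers in the normal-cone representation with the linear independence of $\{\tilde{g}_i(p_1)\}_{i\in\tilde{I}_1\cup\bar{K}\cup\hat{L}}$ and the inclusion $I_{p_1}^{\multifR}(x_1)\subset I_{\bar{p}}^{\multifR}(\bar{x})$ from Remark \ref{remark:inclusion}, testing the defining inequality $\langle x_1'\mid x_1-y\rangle\geq 0$ of the normal cone against suitably chosen feasible $y$; the RCRCQ assumption is what controls the rank of the active gradients and makes this test conclusive. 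Once this membership is secured, everything else reduces to transport through Corollary \ref{remark:RCRCQ_extended} and a routine Lipschitz estimate.
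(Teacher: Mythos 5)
Your proposal follows the paper's own proof essentially step for step: you verify condition \eqref{cond:2} of Proposition \ref{prop:eqivalent}, extract the index set $\hat{L}$ and the controlled multipliers from part \eqref{cond:1_corollary} of Corollary \ref{remark:RCRCQ_extended}, invoke part \eqref{cond:2_corollary} (Lipschitz-likeness of $\multifR_{\bar{K},\hat{L}}$) to produce $x_2$, transport the multipliers to define $x_2'=\sum_{i\in\tilde{I}_1\cup\bar{K}\cup\hat{L}}\lambda_i\tilde{g}_i(p_2)$, and finish with the same Lipschitz estimate on $\|x_1'-x_2'\|$. Two details are in fact handled slightly more carefully by you than in the paper: the explicit choice $\varepsilon<\min_{i\in\bar{K}}\bar{\lambda}_i$ keeping the transported $\bar{K}$-multipliers positive, and placing $x_2'$ directly in $N(x_2;C(p_2))$ via the active-index characterization, whereas the paper writes $x_2'\in N(x_2;R_{\bar{K},\tilde{L}}(p_2))$, a normal cone to a smaller set, which is not literally what Proposition \ref{prop:eqivalent} asks for.

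The genuine problem is the step you yourself flag: the membership $x_1\in R_{\bar{K},\hat{L}}(p_1)$, i.e.\ $\bar{K}\subset I^{\multifR}_{p_1}(x_1)$. Your justification --- that strict positivity of the $\bar{K}$-multipliers forces the $\bar{K}$-constraints to be active --- is false as a general principle: when active and inactive gradients are linearly dependent, a normal vector can be a positive combination, with linearly independent vectors, of \emph{inactive} gradients. Concretely, take the paper's own Example \ref{example:C_L_not_sufficient} with $\setD=[0,\infty)$ and $\bar{K}=\{3\}$: all hypotheses of Proposition \ref{lemma:indices_RCRCQ} hold (the gradients are constant, so RCRCQ is trivial, and \eqref{eq:representation1} reads $(1,1)=1\cdot(1,1)$), yet for $p>0$ the admissible pair $x_1=(0,0)$, $x_1'=(1,1)=1\cdot g_3$ has the third constraint inactive at $x_1$, and indeed $R_{\{3\},\emptyset}(p)=\emptyset$. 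This shows the membership cannot be a consequence of Proposition \ref{lemma:indices_RCRCQ} or Corollary \ref{remark:RCRCQ_extended}\eqref{cond:1_corollary}, which, read literally, provide only the vector identity \eqref{eq:representation2}, with activity guaranteed for $\hat{L}$ but not for $\bar{K}$; any correct argument must use the stability of the representation, which is exactly what fails for $\bar{K}=\{3\}$ in that example. Your sketched repair (testing $\langle x_1'\mid x_1-y\rangle\geq 0$ against feasible $y$, with RCRCQ controlling ranks) never invokes stability and therefore cannot close the gap; even testing against the points $y\in R_{\bar{K},\emptyset}(p_1)$ that stability supplies only shows that the slacks $f_i(p_1)-\langle x_1\mid\tilde{g}_i(p_1)\rangle$, $i\in\bar{K}$, are bounded by a quantity of order $\varepsilon\|y-x_1\|$, not that they vanish. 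To be fair, the paper's proof has the identical soft spot --- it asserts ``$x_1\in R_{\bar{K},\tilde{L}}(p_1)$'' with a bare citation of Corollary \ref{remark:RCRCQ_extended} --- so you have located precisely the point where the published argument is incomplete; but your proposal does not repair it either.
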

\begin{proof}
	
	Let $\varepsilon>0$.  Let $U(\bar{p})$, $V(0)$ be as in Corollary \ref{remark:RCRCQ_extended}.
	We have
	\begin{equation*}
		\bar{v}-P(\bar{v},\bar{p})=\sum_{i \in \tilde{I}_1\cup \bar{K}} \bar{\lambda}_i \tilde{g}_i (\bar{p}),\  \bar{\lambda}_i>0,\ i\in \bar{K}\subset I_{\bar{p}}^\multifR(\bar{x})\cap \tilde{I}_2,
	\end{equation*}  where  $\tilde{g}_i(\bar{p})$, $i\in \tilde{I}_1\cup \bar{K}$ are linearly independent.
	
	Now, let $p_1\in U(\bar{p})$ and $x_1\in (P(\bar{v},\bar{p})+V(0))\cap C(p_1)$, $x_1^\prime \in N(x_1;C(p_1))\cap (\bar{v}-P(\bar{v},\bar{p})+V(0))$. By 
	Corollary \ref{remark:RCRCQ_extended}, there exists $\tilde{L}\subset I_{\bar{p}}^\multifR(P(\bar{v},\bar{p}))\cap \tilde{I}_2\setminus \bar{K}$ such that $x_1\in R_{\bar{K},\tilde{L}}(p_1)$ and 
	\begin{align*}
		& x_1^\prime = \sum_{i\in \tilde{I}_1\cup \bar{K}  } \lambda_i^1 g_i(p_1)+\sum_{i\in \tilde{L}} \lambda_i^1 \tilde{g}_i(p_1),\\
		& \text{where}\ \bar{\lambda}_i-\varepsilon\leq \lambda_i^1 \leq \bar{\lambda}_i+\varepsilon,\quad i \in \tilde{I}_1\cup \bar{K},\\
		& 0< \lambda_i^1\leq \varepsilon,\quad i \in \tilde{L},
	\end{align*} 
	and for any $p_2\in U(\bar{p})$  there exists $x_2\in  R_{\bar{K},\tilde{L}}(p_2)\subset C(p_2)$ such that 
	\begin{equation*}
		\|x_1-x_2\|\leq \ell^1 \|p_1-p_2\|
	\end{equation*}
	for some $\ell^1>0$.
	Since $x_2\in R_{\bar{K},\tilde{L}}(p_2)$ we have
	\begin{equation*}
		x_2^\prime:= \sum_{i \in \tilde{I}_1\cup \bar{K}\cup \tilde{L}} \lambda_i^1 \tilde{g}_i(p_2)\in N(x_2;R_{\bar{K},\tilde{L}}(p_2)).
	\end{equation*}
	Then
	\begin{align*}
		&\|x_1^\prime-x_2^\prime\|=\|\sum_{i \in \tilde{I}_1\cup \bar{K}\cup \tilde{L}} \lambda_i^{1}\tilde{g}_i(p_1)-\sum_{i \in  \tilde{I}_1\cup \bar{K}\cup \tilde{L}} \lambda_i^{1} \tilde{g}_i(p_2)\|\\
		&\leq \sum_{i \in  \tilde{I}_1\cup \bar{K}\cup \tilde{L}} |\lambda_i^{1}|\|\tilde{g}_i(p_1)-\tilde{g}_i(p_2)\|\leq \sum_{i \in  \tilde{I}_1\cup \bar{K}\cup \tilde{L}} |\lambda_i^{1}| \ell_{\tilde{g}_i} \|p_1-p_2\|\\
		&\leq \sum_{i \in  \tilde{I}_1\cup \bar{K}\cup \tilde{L}} (|\bar{\lambda}_i|+\varepsilon) \ell_{\tilde{g}_i} \|p_1-p_2\|
		\leq \ell^2  \|p_1-p_2\|,
	\end{align*}	
	where we put $\ell^2:=\sum_{i \in I_{\bar{p}}^\multifR(P(\bar{v},\bar{p}))} (|\bar{\lambda}_i|+\varepsilon) \ell_{\tilde{g}_i} $.
\end{proof}
Now we are ready to establish our main theorem.
\begin{theorem}\label{theorem:main}
	Let $\hilbertH$ be a Hilbert space and let $\setD\subset \hilbertG$ be a nonempty set of a normed space $\hilbertG$ and $\bar{p}\in \setD$. Let $\multifC:\ \setD \rightrightarrows \hilbertH$ be as in \eqref{multifunction_C}, where  $f_i:\ \setD\rightarrow \mathbb{R}$, $g_i:\ \setD\rightarrow\hilbertH$, $i\in I_1\cup I_2\neq \emptyset,$ $I_1=\emptyset \vee \{1,\dots,q\}$, $I_2=\emptyset \vee \{q+1,\dots,m\}$  are   locally Lipschitz  on $\setD$. 
	Let $\bar{p}\in \setD$, $\bar{v}\notin C(\bar{p})$.
	Assume that  there exists an equivalent representation $\multifR$ of $\multifC$ such that
	\begin{enumerate}[-]
		\item  RCRCQ holds for multifunction $\multifR$ at $(\bar{p},\bar{x})$,
		\item $\multifR$ is a stable representation of $\multifC$ at $(\bar{p},\bar{v},\bar{x})$ with some set $\bar{K}\subset I_{\bar{p}}^\multifR(\bar{x})\cap \tilde{I}_2$. 
	\end{enumerate}
	There exist neighbourhoods $W(\bar{v})$, $U(\bar{p})$ such that the Lipschitzian estimate 
	\begin{equation}\label{lipschitzian_stability_of_projections}
		\| (v_1-v_2)-2[P(v_1,p_1)-P(v_2,p_2)]\|\leq \|v_1-v_2\|+\ell^0\|p_1-p_2\|  
	\end{equation}
	holds for all $(v_1,p_1),(v_2,p_2)\in W(\bar{v})\times U(\bar{p})$ with some positive constant $\ell^0$.
\end{theorem}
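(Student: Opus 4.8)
The estimate to be established is verbatim condition \eqref{cond:II} of Theorem~\ref{theorem:Mordukhovic_2} (with $\bar{x}=P(\bar{v},\bar{p})$). Since the implication \eqref{cond:I}~$\Rightarrow$~\eqref{cond:II} of that theorem has already been proved above, the plan is to reduce the whole argument to verifying its standing hypothesis \eqref{cond:A} together with condition \eqref{cond:I}, and then simply to invoke the cited implication. In other words, the proof is a concatenation of Theorem~\ref{theorem:main_section_RCRCQ}, Proposition~\ref{prop:2ab2}, and Theorem~\ref{theorem:Mordukhovic_2}; no new estimate needs to be derived here.

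Condition \eqref{cond:I}, namely Lipschitz-likeness of the graphical subdifferential mapping $Gr$ around $(\bar{p},\bar{x},\bar{v}-\bar{x})$, is precisely the conclusion of Proposition~\ref{prop:2ab2}. Its hypotheses---the existence of an equivalent stable representation $\multifR$ of $\multifC$ at $(\bar{p},\bar{v},P(\bar{v},\bar{p}))$ (with the set $\bar{K}$) and the validity of RCRCQ for $\multifR$ at $(\bar{p},P(\bar{v},\bar{p}))$---coincide with the two bulleted assumptions of the present theorem, so \eqref{cond:I} follows immediately.

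For hypothesis \eqref{cond:A}, Lipschitz-likeness of $\multifC$ around $(\bar{p},\bar{x})$, I would argue through $\multifR$ rather than $\multifC$ directly, because RCRCQ is representation-dependent and is assumed here only for $\multifR$. Stability of $\multifR$ at $(\bar{p},\bar{v},\bar{x})$ yields the Kuratowski-limit inclusion $\bar{x}\in\liminf\limits_{p\rightarrow\bar{p},\ p\in\setD} R(p)$, as noted just after Definition~\ref{def:stability}. Combining this with RCRCQ for $\multifR$ at $(\bar{p},\bar{x})$ lets me apply Theorem~\ref{theorem:main_section_RCRCQ} to $\multifR$, giving that $\multifR$ is Lipschitz-like at $(\bar{p},\bar{x})$. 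Since $R(p)=C(p)$ for all $p\in\setD$, the set-valued mappings $\multifR$ and $\multifC$ have the same graph, so $\multifC$ is Lipschitz-like around $(\bar{p},\bar{x})$ as well; this is exactly \eqref{cond:A}. With \eqref{cond:A} and \eqref{cond:I} in hand, the implication \eqref{cond:I}~$\Rightarrow$~\eqref{cond:II} of Theorem~\ref{theorem:Mordukhovic_2} furnishes neighbourhoods $W(\bar{v})$, $U(\bar{p})$ and the constant $\ell^0$ for which \eqref{lipschitzian_stability_of_projections} holds. The only genuinely delicate point is the representation-dependence of RCRCQ, which is what forces one to verify \eqref{cond:A} via $\multifR$ and to bridge back to $\multifC$ through the identity $R(p)=C(p)$; everything else is a direct appeal to the results proved earlier.
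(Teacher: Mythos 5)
Your proof is correct and takes essentially the same route as the paper's: the paper's proof of Theorem \ref{theorem:main} is exactly the concatenation you describe, namely verifying hypothesis \eqref{cond:A} and condition \eqref{cond:I} of Theorem \ref{theorem:Mordukhovic_2} (the latter via Proposition \ref{prop:2ab2}) and then invoking the implication \eqref{cond:I}~$\Rightarrow$~\eqref{cond:II}. You additionally spell out the step the paper leaves implicit---obtaining \eqref{cond:A} by applying Theorem \ref{theorem:main_section_RCRCQ} to $\multifR$ (whose hypotheses follow from stability plus RCRCQ for $\multifR$) and transferring Lipschitz-likeness to $\multifC$ through the identity $R(p)=C(p)$---which is the right reading, since the paper's citation of Theorem \ref{theorem:main_section_RCRCQ_2} (the fixed-normal-vectors case) there appears to be a typo for Theorem \ref{theorem:main_section_RCRCQ}.
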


In particular, we get the following result. 
\begin{theorem}\label{theorem:main_2}
	Let $\hilbertH$ be a Hilbert space and let $\setD\subset \hilbertG$ be a nonempty set of a normed space $\hilbertG$ and $\bar{p}\in \setD$. Let $\multifC:\ \setD \rightrightarrows \hilbertH$ be as in \eqref{multifunction_C}, where  $f_i:\ \setD\rightarrow \mathbb{R}$, $g_i:\ \setD\rightarrow\hilbertH$, $i\in I_1\cup I_2\neq \emptyset,$ $I_1=\emptyset \vee \{1,\dots,q\}$, $I_2=\emptyset \vee \{q+1,\dots,m\}$  are   locally Lipschitz  on $\setD$. 
	Let $\bar{p}\in \setD$, $\bar{v}\notin C(\bar{p})$.
	Assume that  
	\begin{enumerate}[(1)]
		\item\label{cond:1_main}  RCRCQ holds for multifunction $\multifC$ at $(\bar{p},\bar{x})$,
		\item\label{cond:2_main} $\multifC
		$ is a stable representation  at $(\bar{p},\bar{v},\bar{x})$ with some set $\bar{K}\subset I_{\bar{p}}(\bar{x})\cap \tilde{I}_2$. 
	\end{enumerate}
	There exist neighbourhoods $W(\bar{v})$, $U(\bar{p})$ such that the Lipschitzian estimate 
	\begin{equation}\label{lipschitzian_stability_of_projections_2}
		\| (v_1-v_2)-2[P(v_1,p_1)-P(v_2,p_2)]\|\leq \|v_1-v_2\|+\ell^0\|p_1-p_2\|  
	\end{equation}
	holds for all $(v_1,p_1),(v_2,p_2)\in W(\bar{v})\times U(\bar{p})$ with some positive constant 
	$\ell^0$.
\end{theorem}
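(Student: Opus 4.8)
The final statement, Theorem \ref{theorem:main_2}, is the special case of Theorem \ref{theorem:main} in which the representation $\multifR$ may be taken to be $\multifC$ itself: the original data $\fung_i,\funf_i$ and $I_1,I_2$ trivially furnish an equivalent representation of $\multifC$, so conditions \eqref{cond:1_main} and \eqref{cond:2_main} are precisely the two bulleted hypotheses of Theorem \ref{theorem:main} with $\multifR=\multifC$. Thus it suffices to prove Theorem \ref{theorem:main}, and the plan is to deduce its conclusion directly from Theorem \ref{theorem:Mordukhovic_2}, whose condition \eqref{cond:II} is exactly the asserted estimate \eqref{lipschitzian_stability_of_projections}. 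Writing $\bar{x}:=P(\bar{v},\bar{p})$, I only have to verify, at $(\bar{p},\bar{x},\bar{v}-\bar{x})$, the two standing hypotheses \eqref{cond:A} and \eqref{cond:I} of that theorem. Since $\bar{v}\notin C(\bar{p})$, the projection $\bar{x}$ lies on $\text{bd}\,C(\bar{p})$ with $\bar{v}-\bar{x}\in N(\bar{x};C(\bar{p}))\setminus\{0\}$ and $I_{\bar{p}}(\bar{x})\neq\emptyset$, which is exactly the configuration required by Proposition \ref{prop:2ab2}.

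To obtain \eqref{cond:A} I would apply Theorem \ref{theorem:main_section_RCRCQ} to $\multifR$. Because $R(p)=C(p)$ for every $p\in\setD$, the mappings $\multifR$ and $\multifC$ are the same set-valued map, so Lipschitz-likeness of $\multifR$ around $(\bar{p},\bar{x})$ is literally condition \eqref{cond:A}. Theorem \ref{theorem:main_section_RCRCQ} needs RCRCQ for $\multifR$ at $(\bar{p},\bar{x})$, which holds by hypothesis, together with $\bar{x}\in\liminf_{p\to\bar{p},\,p\in\setD}R(p)$; the latter is guaranteed by the stability hypothesis, since a stable representation forces $\bar{x}\in\liminf_{p\to\bar{p},\,p\in\setD}R(p)=\liminf_{p\to\bar{p},\,p\in\setD}C(p)$ (the remark following Definition \ref{def:stability}).

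For \eqref{cond:I} I would simply invoke Proposition \ref{prop:2ab2}: under the existence of an equivalent stable representation $\multifR$ at $(\bar{p},\bar{v},\bar{x})$ with set $\bar{K}$ and RCRCQ for $\multifR$ at $(\bar{p},\bar{x})$ — exactly the present hypotheses — the graphical subdifferential mapping $Gr$ of \eqref{mulfiunction:K} is Lipschitz-like at $(\bar{p},\bar{x},\bar{v}-\bar{x})$. The essential point is that $Gr$ is assembled from the normal cones $N(\cdot;C(p))$ and hence depends only on the sets $C(p)=R(p)$, not on the description used; so the Lipschitz-likeness produced via $\multifR$ really is condition \eqref{cond:I} for the problem stated through $\multifC$. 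With \eqref{cond:A} and \eqref{cond:I} both verified, Theorem \ref{theorem:Mordukhovic_2} delivers \eqref{cond:II}, i.e.\ \eqref{lipschitzian_stability_of_projections} on suitable neighbourhoods $W(\bar{v})$, $U(\bar{p})$ with some $\ell^0>0$ (indeed $\ell^0=4\ell_{Gr}$ by the remark after that theorem).

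The analytic content is entirely concentrated in Proposition \ref{prop:2ab2}, which in turn rests on the uniform, continuously varying multiplier representation furnished by Proposition \ref{lemma:indices_RCRCQ} and Corollary \ref{remark:RCRCQ_extended}; the present argument is only an assembly step and I expect no difficulty there. The one genuinely delicate issue to keep in mind is representation-dependence: RCRCQ and stability are properties of the chosen representation $\multifR$ and can fail for other representations of the same sets, so the argument must be careful to use these hypotheses solely to extract the two representation-free conclusions — Lipschitz-likeness of $\multifC$ and of $Gr$ — that feed Theorem \ref{theorem:Mordukhovic_2}.
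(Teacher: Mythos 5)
Your proposal is correct and follows essentially the same route as the paper: reduce Theorem \ref{theorem:main_2} to Theorem \ref{theorem:main} by taking $\multifR=\multifC$, then assemble the conclusion from Theorem \ref{theorem:Mordukhovic_2} by verifying condition \eqref{cond:A} via the Lipschitz-likeness result under RCRCQ and the Kuratowski limit condition (the latter supplied by the stability hypothesis), and condition \eqref{cond:I} via Proposition \ref{prop:2ab2}. In fact your citation of Theorem \ref{theorem:main_section_RCRCQ} for step \eqref{cond:A} is the appropriate one, whereas the paper's one-line proof cites Theorem \ref{theorem:main_section_RCRCQ_2} (the fixed-normal-vectors case), which appears to be a slip.
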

\begin{proof}\textit{of Theorem \ref{theorem:main}}. 
	The proof follows directly from Theorem \ref{theorem:Mordukhovic_2}, Theorem \ref{theorem:main_section_RCRCQ_2} and Proposition \ref{prop:2ab2}.
\end{proof}
Clearly, by \eqref{lipschitzian_stability_of_projections},
\begin{equation*}
	\|P(v_1,p_1)-P(v_2,p_2)\|\leq \|v_1-v_2\|+\frac{\ell^0}{2}\|p_1-p_2\|.
\end{equation*}
If the multifunction $\multifC$ is constant around $\bar{p}$, then assumptions 
of Theorem \ref{theorem:main} are satisfied.
\begin{corollary}
	Under assumptions of Theorem \ref{theorem:main}, projection  of a given fixed $\bar{v}$ onto $C(p)$, $p\in \setD$, i.e.,  
	\begin{equation*}
		P_{\bar{v}}(p):=P(\bar{v},p),\quad p\in \setD
	\end{equation*}
	is locally Lipschitz at $\bar{p}$.	
\end{corollary}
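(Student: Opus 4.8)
The plan is to specialize the two-parameter Lipschitzian estimate of Theorem~\ref{theorem:main} to the diagonal $v_1=v_2=\bar{v}$, where the dependence on the $v$-variable drops out entirely and only the dependence on $p$ survives. Concretely, Theorem~\ref{theorem:main} supplies neighbourhoods $W(\bar{v})$ of $\bar{v}$ and $U(\bar{p})$ of $\bar{p}$ together with a constant $\ell^0>0$ such that
\begin{equation*}
	\| (v_1-v_2)-2[P(v_1,p_1)-P(v_2,p_2)]\|\leq \|v_1-v_2\|+\ell^0\|p_1-p_2\|
\end{equation*}
for all $(v_1,p_1),(v_2,p_2)\in W(\bar{v})\times U(\bar{p})$. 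Since $\bar{v}\in W(\bar{v})$, I would fix $v_1=v_2=\bar{v}$ and let $p_1,p_2$ range over $U(\bar{p})$; this is legitimate because both pairs $(\bar{v},p_1)$ and $(\bar{v},p_2)$ then lie in $W(\bar{v})\times U(\bar{p})$.

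With this choice the term $\|v_1-v_2\|$ on the right-hand side vanishes and the difference $v_1-v_2$ on the left-hand side vanishes as well, so the estimate collapses to
\begin{equation*}
	2\,\|P(\bar{v},p_1)-P(\bar{v},p_2)\|\leq \ell^0\|p_1-p_2\|,\qquad p_1,p_2\in U(\bar{p}).
\end{equation*}
Recalling the definition $P_{\bar{v}}(p):=P(\bar{v},p)$ and dividing by $2$, this yields
\begin{equation*}
	\|P_{\bar{v}}(p_1)-P_{\bar{v}}(p_2)\|\leq \tfrac{\ell^0}{2}\|p_1-p_2\|\qquad\text{for all }p_1,p_2\in U(\bar{p}),
\end{equation*}
which is precisely local Lipschitz continuity of $P_{\bar{v}}$ at $\bar{p}$, with modulus $\ell^0/2$ on the neighbourhood $U(\bar{p})$. (Equivalently, one may read off the same conclusion directly from the displayed consequence $\|P(v_1,p_1)-P(v_2,p_2)\|\leq \|v_1-v_2\|+\frac{\ell^0}{2}\|p_1-p_2\|$ by setting $v_1=v_2=\bar{v}$.)

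There is essentially no obstacle here: the corollary is a direct reading of Theorem~\ref{theorem:main} along the constant-$v$ diagonal. The only points to verify are the membership $\bar{v}\in W(\bar{v})$, which is immediate since $W(\bar{v})$ is by construction a neighbourhood of $\bar{v}$, and the fact that the hypotheses of Theorem~\ref{theorem:main} are available, which holds because the corollary is stated under exactly those assumptions.
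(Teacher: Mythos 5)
Your proof is correct and matches the paper's (implicit) argument exactly: the paper derives the corollary by specializing the estimate \eqref{lipschitzian_stability_of_projections} of Theorem \ref{theorem:main} (equivalently its displayed consequence $\|P(v_1,p_1)-P(v_2,p_2)\|\leq \|v_1-v_2\|+\frac{\ell^0}{2}\|p_1-p_2\|$) to $v_1=v_2=\bar{v}$, which is precisely what you do. The resulting modulus $\ell^0/2$ on $U(\bar{p})$ is the same in both treatments.
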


Example \ref{example:C_L_not_sufficient} shows that for a given representation of  $\multifC$
one can not expect that there exists $\bar{K}$ such that \eqref{representation:corollary} and \eqref{conditions_setL} holds.
\begin{example}\label{example:C_L_not_sufficient}
	Let $\hilbertH=\mathbb{R}^2$, $\hilbertG=\mathbb{R}$, $\bar{p}=0$, $\bar{v}=(1,1)$ and
	\begin{equation}\label{multifunction:example_not_RCRCQ}
		\multifC(p)=\left\{ x\in \mathbb{R}^2 \ \bigg|\  \begin{array}{l}
			\langle x \ |\ (1,0) \rangle \leq 0, \\
			\langle x \ |\ (0,1) \rangle \leq 0, \\
			\langle x \ |\ (1,1) \rangle \leq p \\
		\end{array}  \right\}.
	\end{equation}
	In this case we have
	\begin{equation*}
		P(\bar{v},p)=\left\{\begin{array}{lll}
			(0,0) & \text{if} & p\geq 0,\\
			(\frac{p}{2},\frac{p}{2}) & \text{if} & p< 0
		\end{array}\right. 
	\end{equation*}
	and $P(\bar{v},p)$ is a Lipschitz function of $p$. On the other hand,
	\begin{equation*}
		\bar{v}-P(\bar{v},\bar{p})=\left\{\begin{array}{lll}
			1\cdot (1,0)+1\cdot (0,1) & \implies & \bar{K}=\{1,2\} \text{ and }C_\emptyset=\emptyset  \text{ for }  p<0,\\
			1\cdot (1,1) & \implies & \bar{K}=\{3\} \text{ and }C_\emptyset=\emptyset\text{ for }  p>0
		\end{array}\right. .			
	\end{equation*}
	Hence, the representation \eqref{multifunction:example_not_RCRCQ} of $\multifC$ is not stable  at $(0,(0,0),(1,1))$.
\end{example}
\begin{remark}
	Let us note that 
	multifunction $\multifC$, given by \eqref{multifunction:example_not_RCRCQ}, can be equivalently represented as 
	\begin{equation}\label{multifunction:example_stable}
		{\multifC}(p)=\left\{ x\in \mathbb{R}^2 \ \bigg|\  \begin{array}{l}
			\langle x \ |\ (1,0) \rangle \leq 0, \\
			\langle x \ |\ (0,1) \rangle \leq 0, \\
			\langle x \ |\ (1,1) \leq g_3(p) \\
		\end{array}  \right\},
	\end{equation}
	where
	\begin{equation*}
		g_3(p)=\left\{\begin{array}{lll}
			p & \text{if} & p< 0\\
			0 & \text{if} & p\geq 0\\
		\end{array}
		\right. .
	\end{equation*}
	The representation given in \eqref{multifunction:example_stable} is stable at $(\bar{p},\bar{v},P(\bar{v},\bar{p}))$ and RCRCQ holds at $(\bar{p},P(\bar{p},\bar{v}))$.
\end{remark}

\begin{corollary}
	Suppose that in the definition of the set $C(p)$, $p\in \setD$, given in \eqref{multifunction_C}, $I_2=\emptyset$, i.e.,  
	\begin{equation*}
		C(p):=\left\{ x\in \hilbertH\ \bigg|\ 
		\begin{array}{ll}
			\langle x \ |\ g_i(p)\rangle = f_i(p), &i \in I_1
		\end{array}
		\right\}.
	\end{equation*}
	Let $\bar{p}\in \setD$, $\bar{v}\notin C(\bar{p})$, $\bar{x}=P(\bar{v},\bar{p})$ and  the following hold:
	\begin{enumerate}[(1)]
		\item RCRCQ holds for multifunction $\multifC$ at $(\bar{p},\bar{x})$, i.e., there exists a neighbourhood $U(\bar{p})$ such that
		\begin{equation*}
			\rank \{g_i(p),\ i\in I_1   \}=\rank \{ g_i(\bar{p}),i\in I_1  \},\quad p\in U(\bar{p}).
		\end{equation*}
		\item $\bar{x}\in \liminf\limits_{p\rightarrow \bar{p},\ p\in \setD} \multifC(p)$.
	\end{enumerate}
	Then the projection $P(v,p)$ is locally Lipschitz at $(\bar{v},\bar{p})$.
\end{corollary}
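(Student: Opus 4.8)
The plan is to verify the hypotheses of Theorem~\ref{theorem:main} for the \emph{trivial} equivalent representation $\multifR=\multifC$ together with the empty index set $\bar{K}=\emptyset$, and then to read off local Lipschitzness from the estimate \eqref{lipschitzian_stability_of_projections}. The entire argument will rest on the observation that, since $I_2=\emptyset$, the inequality part of every representation disappears: taking $\multifR=\multifC$ gives $\tilde{I}_2=I_2=\emptyset$, so the only admissible choice in \eqref{representation:corollary} is $\bar{K}=\emptyset$, and the only index set $L$ satisfying \eqref{conditions_setL} is $L=\emptyset$. Consequently $R_{\bar{K},L}=R_{\emptyset,\emptyset}=C$, and the conditions of Theorem~\ref{theorem:main} that concern inequality constraints become vacuous.

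First I would check the two bullet points of Theorem~\ref{theorem:main}. RCRCQ for $\multifR=\multifC$ at $(\bar{p},\bar{x})$ is precisely assumption~(1), i.e.\ the constancy of $\rank\{g_i(p),\ i\in I_1\}$ near $\bar{p}$. Next I would establish the representation \eqref{representation:corollary} with $\bar{K}=\emptyset$. Here I would use that, with $I_2=\emptyset$, the set $C(\bar{p})$ is an affine subspace (an intersection of hyperplanes); after the reduction guaranteed by Proposition~\ref{propostion:I_1-linearly_independent} the vectors $g_i(\bar{p})$, $i\in I_1$, may be assumed linearly independent, and the normal cone to this affine subspace equals $\mathrm{span}\{g_i(\bar{p}):i\in I_1\}$. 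Since $\bar{x}=P(\bar{v},\bar{p})$ forces $\bar{v}-\bar{x}\in N(\bar{x};C(\bar{p}))$, this yields
$$
\bar{v}-\bar{x}=\sum_{i\in I_1}\bar{\lambda}_i g_i(\bar{p}),
$$
which is exactly \eqref{representation:corollary} with $\tilde{I}_1=I_1$ and $\bar{K}=\emptyset$ (no positivity constraints being required when $\bar{K}=\emptyset$).

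It then remains to confirm that $\multifC$ is a stable representation at $(\bar{p},\bar{v},\bar{x})$ in the sense of Definition~\ref{def:stability}. As observed, the only index set $L$ satisfying \eqref{conditions_setL} is $L=\emptyset$, and $R_{\emptyset,\emptyset}(p)=C(p)$ for every $p\in\setD$, so the Kuratowski-limit requirement \eqref{assumption:Kuratowski} collapses to $\bar{x}\in\liminf\limits_{p\rightarrow\bar{p},\ p\in\setD}\multifC(p)$, which is assumption~(2). With both hypotheses of Theorem~\ref{theorem:main} in hand, that theorem delivers \eqref{lipschitzian_stability_of_projections} on some $W(\bar{v})\times U(\bar{p})$, whence $\|P(v_1,p_1)-P(v_2,p_2)\|\leq\|v_1-v_2\|+\frac{\ell^0}{2}\|p_1-p_2\|$, i.e.\ local Lipschitzness of $P$ at $(\bar{v},\bar{p})$. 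The only point I expect to require genuine care is the identification of the normal cone of the equality-defined set with $\mathrm{span}\{g_i(\bar{p})\}$; this is what lets the stability condition reduce to the plain Kuratowski-limit assumption~(2), and everything else is a direct specialization of Theorem~\ref{theorem:main}.
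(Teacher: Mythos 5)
Your proposal is correct and is essentially the paper's intended argument: the corollary is a direct specialization of Theorem \ref{theorem:main_2} (equivalently, Theorem \ref{theorem:main} with $\multifR=\multifC$), where $I_2=\emptyset$ forces $\bar{K}=\emptyset$ and $L=\emptyset$, so $R_{\emptyset,\emptyset}=C$ and the stability condition \eqref{assumption:Kuratowski} collapses to assumption (2), while the representation \eqref{representation:corollary} follows from $\bar{v}-\bar{x}\in N(\bar{x};C(\bar{p}))=\mathrm{span}\{g_i(\bar{p}),\ i\in I_1\}$ after the reduction of Proposition \ref{propostion:I_1-linearly_independent} (the paper's standing assumption of linearly independent $g_i$, $i\in I_1$). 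Your handling of that reduction and the final passage from estimate \eqref{lipschitzian_stability_of_projections} to local Lipschitzness match the paper's reasoning exactly.
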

When LICQ condition holds for set $C(\bar{p})$ at $P(\bar{v},\bar{p})$, i.e., when $g_i(\bar{p})$, $i\in I_{\bar{p}}(P(\bar{v},\bar{p}))$ are linearly independent, Theorem \ref{theorem:main_2} can rewritten in  a  considerably simplified form.

\begin{theorem}\label{theorem:main_3}	Let $\hilbertH$ be a Hilbert space and let $\setD\subset \hilbertG$ be a nonempty set of a normed space $\hilbertG$ and $\bar{p}\in \setD$. Let $\multifC:\ \setD \rightrightarrows \hilbertH$ be as in \eqref{multifunction_C}, where  $f_i:\ \setD\rightarrow \mathbb{R}$, $g_i:\ \setD\rightarrow\hilbertH$, $i\in I_1\cup I_2\neq \emptyset$, 
	$I_1=\emptyset \vee \{1,\dots,q\}
	$, $I_2=
	\{q+1
	,\dots,m\}\vee \emptyset$  are   locally Lipschitz  on $\setD$.
	Let  $\bar{v}\notin C(\bar{p})$ and LICQ hold for set $C(\bar{p})$ at $P(\bar{v},\bar{p})$. There exist  neighbourhoods $W(\bar{v})$,  $U(\bar{p})$  such that the Lipschitzian estimate 
	\begin{equation*}
		\| (v_1-v_2)-2[P(v_1,p_1)-P(v_2,p_2)]\|\leq \|v_1-v_2\|+\ell^0\|p_1-p_2\|  
	\end{equation*}
	holds for all $(v_1,p_1),(v_2,p_2)\in W(\bar{v})\times U(\bar{p})$ with some positive constant 
	$\ell^0$.
\end{theorem}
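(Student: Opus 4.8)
The plan is to deduce Theorem \ref{theorem:main_3} from Theorem \ref{theorem:main_2} by taking $\multifC$ itself as the equivalent representation and showing that LICQ at $\bar{x}:=P(\bar{v},\bar{p})$ forces both hypotheses \eqref{cond:1_main} and \eqref{cond:2_main} of that theorem. Throughout I write $I_{\bar{p}}(\bar{x})$ for the active index set; since $\bar{x}\in C(\bar{p})$ we always have $I_1\subset I_{\bar{p}}(\bar{x})$, and LICQ is precisely the statement that $\{\fung_i(\bar{p}):i\in I_{\bar{p}}(\bar{x})\}$ is linearly independent. Once \eqref{cond:1_main} and \eqref{cond:2_main} are in hand, invoking Theorem \ref{theorem:main_2} yields the estimate \eqref{lipschitzian_stability_of_projections_2}, which is exactly the asserted conclusion.

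First I would verify RCRCQ. By Lemma \ref{lemma:linear_independent} the family $\{\fung_i(p):i\in I_{\bar{p}}(\bar{x})\}$ remains linearly independent for $p$ in some neighbourhood $U(\bar{p})$. Hence for every index set $J$ with $I_1\subset J\subset I_{\bar{p}}(\bar{x})$ the vectors $\{\fung_i(p):i\in J\}$ are linearly independent, so $\rank\{\fung_i(p):i\in J\}=|J|=\rank\{\fung_i(\bar{p}):i\in J\}$ for all $p\in U(\bar{p})$. This is exactly \eqref{eq:RCRCQ}, so RCRCQ holds for $\multifC$ at $(\bar{p},\bar{x})$.

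Next I would produce a stable representation. From \cite[Theorem 6.41]{deutsch2001best} one has $\bar{v}-\bar{x}=\sum_{i\in I_{\bar{p}}(\bar{x})}\hat{\lambda}_i\fung_i(\bar{p})$ with $\hat{\lambda}_i\ge 0$ for $i\in I_{\bar{p}}(\bar{x})\cap I_2$, and by linear independence these multipliers are unique. Setting $\bar{K}:=\{i\in I_{\bar{p}}(\bar{x})\cap I_2:\hat{\lambda}_i>0\}$ gives $\bar{v}-\bar{x}=\sum_{i\in I_1\cup\bar{K}}\hat{\lambda}_i\fung_i(\bar{p})$ with $\hat{\lambda}_i>0$ on $\bar{K}$ and $\{\fung_i(\bar{p}):i\in I_1\cup\bar{K}\}$ linearly independent, i.e.\ \eqref{representation:corollary} holds. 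It then remains to check the Kuratowski condition \eqref{assumption:Kuratowski}, namely $\bar{x}\in\liminf\limits_{p\rightarrow\bar{p},\ p\in\setD}R_{\bar{K},L}(p)$ for every $L$ satisfying \eqref{conditions_setL}. This is the step I expect to be the crux, but under LICQ it collapses to continuity of a linear system. Fixing such an $L$, note that the equality constraints of $R_{\bar{K},L}$ together with the inequality constraints active at $(\bar{p},\bar{x})$ are indexed exactly by $I_{\bar{p}}(\bar{x})$. For $p$ near $\bar{p}$ I would set $x_p:=\bar{x}+\sum_{i\in I_{\bar{p}}(\bar{x})}c_i(p)\fung_i(p)$, where $c(p)$ solves the Gram system $\sum_{i}c_i(p)\langle\fung_i(p)\mid\fung_j(p)\rangle=\funf_j(p)-\langle\bar{x}\mid\fung_j(p)\rangle$ for $j\in I_{\bar{p}}(\bar{x})$. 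The Gram matrix is invertible by linear independence, and its entries together with the right-hand side are continuous in $p$ with the right-hand side vanishing at $\bar{p}$; hence $c(p)\to 0$ and $x_p\to\bar{x}$. By construction $\langle x_p\mid\fung_j(p)\rangle=\funf_j(p)$ for all $j\in I_{\bar{p}}(\bar{x})$, so the equalities of $R_{\bar{K},L}$ hold and the inequalities indexed by $(I_{\bar{p}}(\bar{x})\cap I_2)\setminus(\bar{K}\cup L)$ hold as equalities, while the remaining inequalities, being strict at $(\bar{p},\bar{x})$, persist for $p$ close to $\bar{p}$ by continuity. Thus $x_p\in R_{\bar{K},L}(p)$ with $x_p\to\bar{x}$, which gives \eqref{assumption:Kuratowski}.

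Having checked RCRCQ and the stability of the representation $\multifC$ at $(\bar{p},\bar{v},\bar{x})$, I would simply apply Theorem \ref{theorem:main_2} to conclude. The only genuinely technical ingredient is the Kuratowski-limit verification; the point is that under LICQ no MFCQ-type feasible-direction construction (as in Proposition \ref{prop:MFCQ-Kuratowski}) is needed, because the active constraints form a linearly independent, hence uniquely and stably solvable, system, and tightening each active inequality to an equality already produces the required convergent feasible selection.
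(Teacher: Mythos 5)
Your proof is correct, and it shares the paper's overall skeleton --- take $\multifR=\multifC$ itself, set $\bar{K}$ equal to the indices of strictly positive multipliers in the (unique, by LICQ) representation of $\bar{v}-P(\bar{v},\bar{p})$, verify RCRCQ and representation stability, and invoke Theorem \ref{theorem:main_2} --- but the crux step, the Kuratowski condition \eqref{assumption:Kuratowski}, is handled by a genuinely different argument. The paper notes that LICQ at $P(\bar{v},\bar{p})$ implies MFCQ for each set $C_{\bar{K},L}(\bar{p})$ with $L$ as in \eqref{conditions_setL}, and then cites Theorem 2.87 of \cite{Bonnans_Shapiro} to get an error bound of the form
\begin{equation*}
\dist\big(\bar{x},C_{\bar{K},L}(p)\big)\leq \alpha\left(\sum_{i\in I_1\cup\bar{K}\cup L}|\langle \bar{x}\mid g_i(p)\rangle-f_i(p)|+\sum_{i\in I_{\bar{p}}(\bar{x})\setminus(\bar{K}\cup L)}[\langle \bar{x}\mid g_i(p)\rangle-f_i(p)]_{+}\right),
\end{equation*}
whose right-hand side vanishes as $p\to\bar{p}$ by continuity of $f_i,g_i$, yielding the liminf condition. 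You instead build the feasible selection explicitly: $x_p=\bar{x}+\sum_{i\in I_{\bar{p}}(\bar{x})}c_i(p)g_i(p)$, with $c(p)$ solving the Gram system of the active family, so that every constraint active at $(\bar{p},\bar{x})$ holds with equality at $x_p$ and the inactive ones stay strict by continuity; invertibility and continuity of the Gram matrix near $\bar{p}$ follow from LICQ via Lemma \ref{lemma:linear_independent}. Your route is more elementary and self-contained (no external stability theorem), produces a single selection $x_p$ that works simultaneously for every admissible $L$, and, since $f_i,g_i$ are locally Lipschitz and the system's right-hand side vanishes at $\bar{p}$, even gives the quantitative rate $\|x_p-\bar{x}\|=O(\|p-\bar{p}\|)$, matching what the error bound provides. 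What the paper's route buys is generality: the MFCQ-plus-error-bound argument extends verbatim to the situation where LICQ is weakened to MFCQ for each $C_{\bar{K},L}(\bar{p})$ (exactly the content of the corollary following Theorem \ref{theorem:main_3}), whereas your Gram-system construction genuinely needs linear independence of the full active family and would not survive that weakening.
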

\begin{proof}
	We have
	\begin{equation*}
		\bar{v}-P(\bar{v},\bar{p})=\sum_{i \in {I}_1\cup \bar{K}} \bar{\lambda}_i {g}_i (\bar{p}),\  \bar{\lambda}_i>0,\ i\in \bar{K}\subset I_{\bar{p}}(\bar{x})\cap {I}_2,
	\end{equation*}
	where  $g_i(\bar{p})$, $\bar{p}\in \setD$, $i\in I_1\cup K\subset I_{\bar{p}}(P(\bar{v},\bar{p}))$ are linearly independent.
	Thus \eqref{cond:1_main} of Theorem \ref{theorem:main_2} is satisfied. 
	
	Now we show \eqref{cond:2_main} of Theorem \ref{theorem:main_2}. Observe that LICQ hold for set 	$C_{\bar{K},L}(\bar{p})$ at $P(\bar{v},\bar{p})$   with any $L$ satisfying \eqref{conditions_setL}. Hence, MFCQ holds for set $C_{\bar{K},L}(\bar{p})$ at $P(\bar{v},\bar{p})$  with any $L$ satisfying \eqref{conditions_setL}. Thus, by Theorem 2.87 of \cite{Bonnans_Shapiro}, for any $L$ satisfying \eqref{conditions_setL}, there exist $\alpha>0$ and a neighbourhood $U(\bar{p})$ of $\bar{p}$ such that for all $p\in U(\bar{p})$ 
	\begin{equation*}
		\dist (\bar{x},C_{\bar{K},L}(p))\leq \alpha \left(\sum_{i\in I_1\cup \bar{K}\cup L} | \langle \bar{x}  \mid g_i(p) \rangle - f_i(p)| + \sum_{i\in I_{\bar{p}(\bar{x})}\setminus (\bar{K}\cup L)}  [ \langle \bar{x}  \mid g_i(p) \rangle - f_i(p)]_+  \right).
	\end{equation*}
	This implies  that 
	$\bar{x}\in \liminf\limits_{p\rightarrow \bar{p},\ p\in \setD} \multifC_{\bar{K},L}(p)$ for any $L$ satisfying \eqref{conditions_setL}, i.e. assumption \eqref{cond:2_main} of Theorem \ref{theorem:main_2} is satisfied, which proves the assertion. 
\end{proof}
In view of proof of Theorem \ref{theorem:main_3} the following corollary holds. 
\begin{corollary}
	Let $\hilbertH$ be a Hilbert space and let $\setD\subset \hilbertG$ be a nonempty set of a normed space $\hilbertG$ and $\bar{p}\in \setD$. Let $\multifC:\ \setD \rightrightarrows \hilbertH$ be as in \eqref{multifunction_C}, where  $f_i:\ \setD\rightarrow \mathbb{R}$, $g_i:\ \setD\rightarrow\hilbertH$, $i\in I_1\cup I_2\neq \emptyset,$ $I_1=\emptyset \vee \{1,\dots,q\}$, $I_2=\emptyset \vee \{q+1,\dots,m\}$  are   locally Lipschitz  on $\setD$. 
	Let $\bar{v}\notin C(\bar{p})$ 	and
	\begin{equation*}
		\bar{v}-P(\bar{v},\bar{p})=\sum_{i \in {I}_1\cup \bar{K}} \bar{\lambda}_i {g}_i (\bar{p}),\  \bar{\lambda}_i>0,\ i\in \bar{K}\subset I_{\bar{p}}(\bar{x})\cap {I}_2,
	\end{equation*}
	where $g_i(\bar{p})$, $i\in I_1\cup \bar{K}$ are linearly independent.
	Assume that  
	\begin{enumerate}[(1)]
		\item  RCRCQ holds for multifunction $\multifC$ at $(\bar{p},\bar{x})$,
		\item  MFCQ holds for set $C_{\bar{K},L}(\bar{p})$ at $P(\bar{v},\bar{p})$  with any $L$ satisfying \eqref{conditions_setL}.
	\end{enumerate}
	There exist neighbourhoods $W(\bar{v})$, $U(\bar{p})$ such that the Lipschitzian estimate 
	\begin{equation*}
		\| (v_1-v_2)-2[P(v_1,p_1)-P(v_2,p_2)]\|\leq \|v_1-v_2\|+\ell^0\|p_1-p_2\|  
	\end{equation*}
	holds for all $(v_1,p_1),(v_2,p_2)\in W(\bar{v})\times U(\bar{p})$ with some positive constant 
	$\ell^0$.
\end{corollary}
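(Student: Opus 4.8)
The plan is to run the corollary through Theorem \ref{theorem:main_2} applied to the \emph{trivial} equivalent representation $\multifR=\multifC$ itself, i.e.\ with $\tilde{I}_1=I_1$, $\tilde{I}_2=I_2$, $\tilde{g}_i=g_i$, $\tilde{f}_i=f_i$, so that $I_{\bar{p}}^\multifR(\bar{x})=I_{\bar{p}}(\bar{x})$ and $R_{\bar{K},L}=C_{\bar{K},L}$. With $\bar{x}:=P(\bar{v},\bar{p})$, it then suffices to verify the two hypotheses of Theorem \ref{theorem:main_2} for this representation, after which the claimed Lipschitzian estimate is exactly its conclusion. The first hypothesis---that RCRCQ holds for $\multifC$ at $(\bar{p},\bar{x})$---is precisely assumption (1), so nothing needs to be done there.

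The substance is the second hypothesis: that $\multifC$, with the given set $\bar{K}$, is a stable representation at $(\bar{p},\bar{v},\bar{x})$ in the sense of Definition \ref{def:stability}. The decomposition \eqref{representation:corollary} is supplied verbatim by the assumed identity $\bar{v}-P(\bar{v},\bar{p})=\sum_{i\in I_1\cup\bar{K}}\bar{\lambda}_i g_i(\bar{p})$ together with the assumed linear independence of $g_i(\bar{p})$, $i\in I_1\cup\bar{K}$. Hence it remains only to establish the Kuratowski-limit condition \eqref{assumption:Kuratowski}, namely that $\bar{x}\in\liminf\limits_{p\rightarrow\bar{p},\,p\in\setD}C_{\bar{K},L}(p)$ for every index set $L$ satisfying \eqref{conditions_setL}.

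To obtain this I would reproduce the final step of the proof of Theorem \ref{theorem:main_3}. Fix such an $L$. Since every $i\in I_1\cup\bar{K}\cup L$ is active at $\bar{x}$ (because $\bar{K},L\subset I_{\bar{p}}(\bar{x})$), we have $\bar{x}\in C_{\bar{K},L}(\bar{p})$. By assumption (2), MFCQ holds for $C_{\bar{K},L}(\bar{p})$ at $P(\bar{v},\bar{p})$, so Theorem~2.87 of \cite{Bonnans_Shapiro} yields a constant $\alpha>0$ and a neighbourhood $U(\bar{p})$ with
\[
\dist(\bar{x},C_{\bar{K},L}(p))\le\alpha\Big(\sum_{i\in I_1\cup\bar{K}\cup L}|\langle\bar{x}\mid g_i(p)\rangle-f_i(p)|+\sum_{i\in I_{\bar{p}}(\bar{x})\setminus(\bar{K}\cup L)}[\langle\bar{x}\mid g_i(p)\rangle-f_i(p)]_+\Big)
\]
for all $p\in U(\bar{p})$. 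As $\langle\bar{x}\mid g_i(\bar{p})\rangle=f_i(\bar{p})$ for all active indices, every summand vanishes at $p=\bar{p}$, and since the $f_i,g_i$ are locally Lipschitz and the inner product is continuous, the right-hand side tends to $0$ as $p\rightarrow\bar{p}$. Thus $\dist(\bar{x},C_{\bar{K},L}(p))\rightarrow 0$, which is exactly $\bar{x}\in\liminf\limits_{p\rightarrow\bar{p},\,p\in\setD}C_{\bar{K},L}(p)$; because only finitely many admissible $L$ occur, a single neighbourhood serves for all of them, establishing \eqref{assumption:Kuratowski}. Both hypotheses of Theorem \ref{theorem:main_2} now hold, and its conclusion gives the assertion.

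The step I expect to be most delicate is the passage from MFCQ to the lower Kuratowski limit for the \emph{mixed} equality/inequality set $C_{\bar{K},L}(p)$: one cannot invoke the simpler Proposition \ref{prop:MFCQ-Kuratowski}, which is stated only for purely inequality systems, so the quantitative error bound of \cite{Bonnans_Shapiro} is genuinely needed, and some care is required in confirming that this bound is available under merely locally Lipschitz (rather than smooth) data and in the Hilbert-space setting.
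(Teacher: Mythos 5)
Your proposal is correct and follows essentially the same route as the paper: the authors prove this corollary exactly ``in view of the proof of Theorem \ref{theorem:main_3}'', i.e.\ by feeding the assumed decomposition and RCRCQ into Theorem \ref{theorem:main_2} with the trivial representation $\multifR=\multifC$, and verifying the stability condition \eqref{assumption:Kuratowski} for each admissible $L$ via MFCQ and the distance estimate of Theorem~2.87 of \cite{Bonnans_Shapiro}. Your closing concern is not an obstacle, since the constraints are affine in $x$ for each fixed $p$ and the bound is only needed at the fixed point $\bar{x}$, where continuity of the data in $p$ already drives the right-hand side to zero.
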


\section{Conclusion}\label{section:conclusion}
In the present paper we proved Lipschitzian stability of projections (in the sense of \eqref{lipschitzian_stability_of_projections})		 onto parametric polyhedral sets in Hilbert space setting with parameters appearing both in left- and right-hand sides of  constraints, which are assumed to be locally Lipschitz. The equality and inequality constraints are allowed. Basic tools for our main results are RCRCQ condition and the representation stability condition (see Definition \ref{def:stability}). 

In general, there is no relationship between RCRCQ and MFCQ (cf. \cite{Kruger2014}).
Moreover, in Propositions \ref{lemma:indices_RCRCQ}, \ref{proposition:noneptiness_lagrange}, Corollary \ref{remark:RCRCQ_extended}, Theorem \ref{theorem:main}  the conclusions depend upon formula \eqref{eq:representation1} and the representation stability condition in which the index set $\bar{K}$ may not be uniquely defined.
\section{Appendix}

\begin{lemma}\label{lemma:linear_independent}(\cite[Lemma 10]{on_lipschitz_like_property})
	Let $J=\{1,\dots,k\}$. Let $g_i:\ \hilbertH\rightarrow \mathcal H$, $i\in J$ be continuous operators and let $\bar{p}$ be such that $g_i(\bar{p})$, $i\in J$ are linearly independent. Then there exists a neighbourhood $U(\bar{p})$ such that for all $p\in U(\bar{p})$, $g_i(p)$, $i\in J$ are linearly independent.
\end{lemma}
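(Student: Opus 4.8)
The plan is to reduce the (possibly infinite-dimensional) linear independence condition to the nonvanishing of a single real-valued function of $p$, namely the Gram determinant, and then to invoke continuity. Recall that finitely many vectors $v_1,\dots,v_k$ in a Hilbert space are linearly independent if and only if their Gram matrix $\bigl(\langle v_i \mid v_j\rangle\bigr)_{i,j=1}^{k}$ is nonsingular: for linearly independent vectors the Gram matrix is symmetric positive definite, whereas a nontrivial relation $\sum_i \lambda_i v_i = 0$ produces the very same relation among the columns of the Gram matrix, forcing its determinant to vanish.

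First I would define, for $p$ ranging over the common domain of the $g_i$,
\[
\Delta(p):=\det\bigl(\langle g_i(p)\mid g_j(p)\rangle\bigr)_{i,j=1}^{k}.
\]
Since each $g_i$ is continuous at $\bar{p}$ and the inner product $\langle\cdot\mid\cdot\rangle:\hilbertH\times\hilbertH\to\mathbb{R}$ is continuous, each entry $p\mapsto\langle g_i(p)\mid g_j(p)\rangle$ is continuous at $\bar{p}$; as the determinant is a polynomial in its entries, the composition $\Delta$ is continuous at $\bar{p}$.

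Next I would observe that by hypothesis $g_1(\bar{p}),\dots,g_k(\bar{p})$ are linearly independent, so the Gram characterization gives $\Delta(\bar{p})\neq 0$ (in fact $\Delta(\bar{p})>0$). By continuity of $\Delta$ at $\bar{p}$ there is a neighbourhood $U(\bar{p})$ on which $\Delta(p)\neq 0$; applying the Gram characterization in the reverse direction yields that $g_1(p),\dots,g_k(p)$ are linearly independent for every $p\in U(\bar{p})$, which is the assertion.

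The only point requiring care is that $\hilbertH$ has arbitrary dimension, which is exactly why a direct minor or rank argument is awkward and the Gram determinant is the right device: it collapses the problem to a fixed $k\times k$ real determinant regardless of $\dim\hilbertH$. Equivalently, I could argue by contradiction: were the claim false, there would exist $p_n\to\bar{p}$ and unit coefficient vectors $\lambda^n\in\mathbb{R}^k$ with $\sum_i\lambda_i^n g_i(p_n)=0$; compactness of the unit sphere in $\mathbb{R}^k$ would furnish a convergent subsequence $\lambda^n\to\lambda$ with $\|\lambda\|=1$, and continuity of the $g_i$ together with boundedness of $\|g_i(p_n)\|$ would force $\sum_i\lambda_i g_i(\bar{p})=0$, contradicting linear independence at $\bar{p}$. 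I would keep the Gram-determinant line as the main proof since it is the most economical and makes the continuity step transparent.
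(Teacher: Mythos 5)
Your proof is correct. Note, however, that the paper itself gives no proof of this lemma: it is quoted verbatim from an external reference (Lemma 10 of the cited work on the Lipschitz-like property), so there is no in-paper argument to compare yours against. On its own merits, your main line is sound: the equivalence ``$v_1,\dots,v_k$ linearly independent $\iff$ the Gram matrix $\bigl(\langle v_i \mid v_j\rangle\bigr)$ is nonsingular'' holds in any (real) Hilbert space, since a nontrivial relation among the $v_i$ passes to the rows of the Gram matrix, and conversely a null vector $\lambda$ of the Gram matrix gives $\bigl\|\sum_i \lambda_i v_i\bigr\|^2=\lambda^{\top}G\lambda=0$. Continuity of each entry $p\mapsto\langle g_i(p)\mid g_j(p)\rangle$ and of the determinant then makes $\Delta$ continuous, and $\Delta(\bar{p})>0$ propagates to a neighbourhood. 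This has the advantage of reducing the infinite-dimensional statement to the nonvanishing of one scalar function, and it is exactly the kind of device one needs here because a minor/rank argument on coordinates is unavailable in infinite dimensions. Your fallback compactness argument is also complete and correct: boundedness of the unit coefficient vectors plus continuity of the $g_i$ justifies passing to the limit in $\sum_i\lambda_i^n g_i(p_n)=0$, and either route would serve as a full proof of the lemma.
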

\begin{lemma}\label{lemma:positive_combination} (\cite[Lemma 6.33]{deutsch2001best})
	If a nonzero vector $x$ is a positive linear combination of the nonzero vectors $g_1,\dots,g_n$, then $x$ is a positive linear combination of linearly independent subset of $\{g_1,\dots,g_n\}$.
\end{lemma}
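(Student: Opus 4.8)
This is a conic Carath\'eodory-type statement, and the natural approach is induction on $n$ built around a single \emph{elimination step} that removes one linearly dependent vector while keeping all coefficients strictly positive. The plan is to reduce the size of the spanning family until independence is reached, checking at each stage that the representation of $x$ survives with positive weights. Since only finitely many vectors $g_1,\dots,g_n$ are involved, everything takes place in their finite-dimensional span, so no care about the dimension of $\hilbertH$ is needed.

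First I would dispose of the trivial cases. If $\{g_1,\dots,g_n\}$ is already linearly independent there is nothing to prove; in particular the base case $n=1$ is immediate, since a single nonzero vector is linearly independent. So I may assume $n\ge 2$, that the vectors are linearly \emph{dependent}, and that $x=\sum_{i=1}^{n}\lambda_i g_i$ with $\lambda_i>0$. The core of the argument is then the elimination. Linear dependence yields scalars $\mu_1,\dots,\mu_n$, not all zero, with $\sum_{i=1}^{n}\mu_i g_i=0$; replacing $(\mu_i)$ by $(-\mu_i)$ if necessary, I may assume $\mu_i>0$ for at least one index. For every $t\in\mathbb{R}$ one has $x=\sum_{i=1}^{n}(\lambda_i-t\mu_i)g_i$. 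Choosing $t:=\min\{\lambda_i/\mu_i\ \mid\ \mu_i>0\}$, which is strictly positive, the coefficient $\lambda_i-t\mu_i$ is nonnegative for every $i$ (it equals $\lambda_i+t|\mu_i|>0$ when $\mu_i\le 0$, and is $\ge 0$ when $\mu_i>0$ by the choice of $t$), while at least one index attaining the minimum gives coefficient $0$. Discarding all indices whose coefficient vanishes, I obtain $x$ as a combination, with strictly positive coefficients, of a \emph{proper} subset of $\{g_1,\dots,g_n\}$; this subset is nonempty because $x\neq 0$.

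Finally I would close the induction: the elimination strictly decreases the number of vectors actually used, so the inductive hypothesis applied to the reduced positive combination produces a linearly independent subset of $\{g_1,\dots,g_n\}$ supporting $x$ with positive coefficients. The step requiring the most attention is the sign bookkeeping in the elimination, namely ensuring that the chosen dependence relation has a strictly positive entry, that $t>0$, and that every coefficient surviving the removal of the zeros is strictly positive; once these are verified, the remainder of the argument is routine.
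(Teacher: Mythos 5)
Your proof is correct: the sign bookkeeping in the elimination step (existence of a strictly positive $\mu_i$, strict positivity of $t$, strict positivity of all surviving coefficients, and nonemptiness of the reduced set since $x\neq 0$) is all handled properly, and the strong induction on the number of vectors closes the argument. Note that the paper itself does not prove this lemma at all — it imports it from the cited reference (Deutsch, Lemma 6.33) — and your elimination argument is precisely the classical conic Carath\'eodory proof given there, so there is nothing to compare beyond saying you have correctly reconstructed the standard proof of the cited result.
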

\begin{remark}\label{remark:positive_combination}
	Let $J=\{1,\dots,k\}$, $J=W_1\cup W_2$, $W_1\cap W_2=\emptyset$  and let $x=\sum_{i\in J} \lambda_i g_i $, $\lambda_i\leq 0$, $i\in W_1$, $\lambda_i\geq 0$, $i\in W_2$, where $g_i$, $i\in J$ are nonzero vectors. Then there exists $\bar{J}\subset J$ and $\bar{\lambda}_i$, $i\in \bar{J}$ such that
	\begin{equation*}
		x=\sum_{i\in \bar{J}} \lambda_i g_i,\quad \lambda_i< 0,\ i\in \bar{J}\cap W_1, \quad \lambda_i>0,\ i\in \bar{J}\cap W_2
	\end{equation*}
	and $g_i$, $i\in \bar{J}$ are linearly independent.
\end{remark}
\begin{proof}
	We have $x=\sum_{i\in J_1} \lambda_i g_i $, where $J_1\subset J$ and $\lambda_i< 0$, $i\in  J_1\cap W_1$, $\lambda_i>0$, $i\in J_1\cap W_2$. Let
	\begin{equation*}
		\tilde{g}_i=\left\{\begin{array}{rl}
			g_i & \text{if}\ i\in W_2\\
			-g_i & \text{if}\ i\in W_1
		\end{array}\right.\quad i\in J_1,\quad
		\tilde{\lambda}_i=\left\{\begin{array}{rl}
			\lambda_i & \text{if}\ i\in W_2\\
			-\lambda_i & \text{if}\ i\in W_1
		\end{array}\right.\quad i\in J_1.
	\end{equation*}
	Then $x=\sum_{i\in J_1} \tilde{\lambda}_i \tilde{g_i}$ and $\tilde{\lambda}_i$, $i\in J_1$ are positive. Applying Lemma \ref{lemma:positive_combination} we have that there exists $\bar{J}\subset J_1$ and $\hat{\lambda}_i>0$, $i\in \bar{J}$ such that $x=\sum_{i\in \bar{J}} \hat{\lambda}_i \tilde{g_i}$ and $\tilde{g}_i$, $i\in \bar{J}$ are linearly independent. Now let
	\begin{equation*}
		\bar{\lambda}_i=\left\{\begin{array}{rl}
			\hat{\lambda}_i & \text{if}\ i\in W_2\\
			-\hat{\lambda}_i & \text{if}\ i\in W_1
		\end{array}\right.\quad i\in \bar{J}.
	\end{equation*} 
	Then $x=\sum_{i\in \bar{J}} \bar{\lambda}_i g_i $, $\bar{\lambda}_i<0$, $i\in \bar{J}\cap W_1$, $\bar{\lambda}_i>0$, $i\in \bar{J}\cap W_2$.
\end{proof}
\begin{lemma}\label{lemma:positive_combination2}\cite[Lemma 12]{on_lipschitz_like_property}
	Let $x=\sum_{i\in J_1} \lambda_i a_i + \sum_{i\in J_2} \lambda_i a_i$, $J_1\cap J_2=\emptyset$, $J_1,J_2$ finite sets, $\lambda_i\in \mathbb{R}$, $i\in J_1$, $\lambda_i\geq 0$, $i\in J_2$ and $a_i$, $i\in J_1\cup J_2$ are non-zero vectors. Assume that $a_i$, $i\in J_1$ are linearly independent. Then there exists $J_2^\prime\subset J_2$ and $\lambda_i^\prime$, $i\in J_1\cup J_2^\prime$, $\lambda_i^\prime\in \mathbb{R}$, $i\in J_1$, $\lambda_i^\prime>0$, $i\in J_2^\prime$ such that
	\begin{equation*}
		\sum_{i\in J_1} \lambda_i a_i + \sum_{i\in J_2} \lambda_i a_i=\sum_{i\in J_1} \lambda_i^\prime a_i + \sum_{i\in J_2^\prime} \lambda_i^\prime a_i
	\end{equation*} 
	and $a_i$, $i\in J_1\cup J_2^\prime$ are linearly independent.
\end{lemma}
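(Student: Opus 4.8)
The plan is to prove Lemma \ref{lemma:positive_combination2} by a Carath\'eodory-type \emph{minimal-support} selection, exploiting the asymmetry that the coefficients on $J_1$ are unconstrained while those on $J_2$ are sign-constrained. First I would consider the family of all representations
\[
x=\sum_{i\in J_1}\mu_i a_i+\sum_{i\in J_2}\nu_i a_i,\qquad \mu_i\in\mathbb{R},\ \nu_i\ge 0,
\]
which is nonempty since the given data furnish one member, and among them select one for which the support $S:=\{i\in J_2\mid \nu_i>0\}$ has minimal cardinality; such a minimizer exists because the cardinalities range over the nonnegative integers. I would then put $J_2':=S$, $\lambda_i':=\mu_i$ for $i\in J_1$ and $\lambda_i':=\nu_i$ for $i\in J_2'$. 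By construction $\lambda_i'>0$ on $J_2'$, the $J_1$-coefficients are real, and the displayed identity of the lemma holds, so the whole statement reduces to verifying that $\{a_i\mid i\in J_1\cup J_2'\}$ is linearly independent.

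To establish independence I would argue by contradiction. Suppose $\sum_{i\in J_1}\alpha_i a_i+\sum_{i\in J_2'}\beta_i a_i=0$ with not all coefficients zero. Since $\{a_i\mid i\in J_1\}$ is linearly independent by hypothesis, some $\beta_{i_0}\neq 0$ (otherwise the relation would collapse to a nontrivial dependence among the $a_i$, $i\in J_1$, forcing all $\alpha_i=0$ as well). Then for every $t\in\mathbb{R}$, subtracting $t$ times this zero vector gives the valid representation $x=\sum_{i\in J_1}(\mu_i-t\alpha_i)a_i+\sum_{i\in J_2'}(\nu_i-t\beta_i)a_i$. The $J_1$-coefficients stay real automatically, so only the $J_2'$-coefficients $\nu_i-t\beta_i$ need to be kept nonnegative. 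Because every $\nu_i$ with $i\in J_2'$ is strictly positive and some $\beta_i\neq 0$, the admissible set $\{t\mid \nu_i-t\beta_i\ge 0\ \forall i\in J_2'\}$ is an interval with $0$ in its interior and at least one finite endpoint; at that endpoint some $\nu_{i^*}-t\beta_{i^*}$ vanishes while all remaining coefficients stay $\ge 0$. Dropping the vanishing index (or indices) produces a representation of the required form with strictly smaller $J_2$-support, contradicting the minimality of $S$. Hence $\{a_i\mid i\in J_1\cup J_2'\}$ is linearly independent and the lemma follows.

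The point to get right --- and the reason one cannot simply invoke Lemma \ref{lemma:positive_combination} or Remark \ref{remark:positive_combination} --- is that all of $J_1$ must be \emph{retained}, since it already indexes a linearly independent ``free'' block, so the reduction is permitted to shrink only the sign-constrained block $J_2$. The minimal-support device enforces exactly this: any admissible dependence necessarily involves $J_2'$, and the one-parameter family alters only the $J_2'$-coefficients (eventually deleting some of them) while leaving $J_1$ intact. The sole genuinely computational ingredient is the endpoint analysis for $t$, namely a ratio test splitting according to the signs of the $\beta_i$ to confirm a finite endpoint at which a coefficient hits zero; this is routine and I do not expect it to pose any obstacle. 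The same conclusion could alternatively be reached by an iterative version of the same elimination rather than a global minimizer, but the minimal-support formulation is the cleanest.
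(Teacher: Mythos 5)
Your proof is correct. Note first that the paper does not actually prove this lemma: it is quoted verbatim from \cite[Lemma 12]{on_lipschitz_like_property}, so there is no in-paper argument to compare against; what you have done is supply a self-contained proof of a cited result. Your argument is sound in all details: the family of admissible representations is nonempty, a minimal-$J_2$-support member exists, any nontrivial dependence among $\{a_i\}_{i\in J_1\cup J_2'}$ must involve some $\beta_{i_0}\neq 0$ (by the assumed independence on $J_1$), and the one-parameter perturbation $t\mapsto(\mu_i-t\alpha_i,\ \nu_i-t\beta_i)$ keeps the $J_1$-block unconstrained while the admissible $t$-interval has a finite endpoint at which some $J_2'$-coefficient vanishes with the rest staying nonnegative, contradicting minimality. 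This is the standard Carath\'eodory-type elimination; whether one phrases it as a global minimal-support selection (as you do) or as an iterative/inductive elimination is purely cosmetic. Your closing observation is also well taken and worth stressing: the in-paper tools, Lemma \ref{lemma:positive_combination} and Remark \ref{remark:positive_combination}, cannot be invoked directly, because they only produce a linearly independent subset of the \emph{whole} index set and may discard elements of $J_1$, whereas the present lemma must retain the entire free block $J_1$ and is only allowed to shrink and re-weight the sign-constrained block $J_2$ --- exactly the asymmetry your perturbation argument exploits.
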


\begin{lemma}\label{lemma:gramschmidt} Let $\hilbertH$ be a Hilbert space.
	Let $\{u_i^n\}_{n\in\mathbb{N}}$, $i\in 1,\dots,K$ be a sequence of $K$-tuples of vectors from $\hilbertH$ such that for any $n\in\mathbb{N}$, $u_i^n$, $i\in 1,\dots,K$ are linearly independent. 
	Assume that 
	\begin{enumerate}[(1)]
		\item $u_i^n\rightarrow u_i$, for $i=1\dots,K$, where $u_i$, $i=1\dots,K$, are linearly independent,
		\item $\sum_{i=1}^{K} \lambda_i^n u_i^n\rightarrow \sum_{i=1}^{K} \bar{\lambda}_i {u}_i$, where $\lambda_i^n,\bar{\lambda}_i\in \mathbb{R}$, $i=1,\dots K$, $n\in\mathbb{N}$.
	\end{enumerate}
	Then$\lambda_i^n\rightarrow \bar{\lambda}_i$, $i=1,\dots,K$.

\end{lemma}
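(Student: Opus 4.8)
The plan is to reduce the statement to the invertibility and continuity of a finite Gram matrix. Write $y^n := \sum_{i=1}^K \lambda_i^n u_i^n$ and $y := \sum_{i=1}^K \bar{\lambda}_i u_i$, so that hypothesis (2) reads $y^n \to y$. First I would pair the combination with each of the vectors $u_j^n$: setting
\[
G^n := \big[\langle u_i^n \mid u_j^n\rangle\big]_{i,j=1}^K, \qquad c^n := \big(\langle y^n \mid u_j^n\rangle\big)_{j=1}^K,
\]
the defining relation $y^n = \sum_i \lambda_i^n u_i^n$ yields the linear system $G^n \lambda^n = c^n$ for the coefficient vector $\lambda^n = (\lambda_i^n)_{i=1}^K$. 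Since the $u_i^n$ are linearly independent for each fixed $n$, the Gram matrix $G^n$ is nonsingular, and hence $\lambda^n = (G^n)^{-1} c^n$. Everything now lives in the finite-dimensional space $\mathbb{R}^K$, even though $\hilbertH$ may be infinite dimensional.

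Next I would establish the three convergences needed to pass to the limit in $\lambda^n = (G^n)^{-1} c^n$. By continuity of the inner product together with the boundedness of the convergent sequences $\{u_i^n\}$ and $\{y^n\}$ (writing, for instance, $\langle y^n \mid u_j^n\rangle - \langle y \mid u_j\rangle = \langle y^n - y \mid u_j^n\rangle + \langle y \mid u_j^n - u_j\rangle$ and estimating each summand), one obtains $G^n \to G := [\langle u_i \mid u_j\rangle]$ and $c^n \to c := (\langle y \mid u_j\rangle)_j$. Because the limit vectors $u_1,\dots,u_K$ are linearly independent, their Gram matrix $G$ is positive definite, so in particular $\det G \neq 0$. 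By Cramer's rule the entries of $(G^n)^{-1}$ are polynomials in the entries of $G^n$ divided by $\det G^n$, and since $\det G^n \to \det G \neq 0$ we conclude $(G^n)^{-1} \to G^{-1}$.

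Combining these, $\lambda^n = (G^n)^{-1} c^n \to G^{-1} c$. It then remains to identify the limit: expanding $y = \sum_i \bar{\lambda}_i u_i$ and pairing with $u_j$ shows $G\bar{\lambda} = c$, whence $G^{-1} c = \bar{\lambda}$, and therefore $\lambda_i^n \to \bar{\lambda}_i$ for every $i$. The only genuinely delicate point is the continuity of matrix inversion, which hinges on $\det G \neq 0$; this is exactly where the linear independence of the limiting tuple $u_1,\dots,u_K$ is used, and it is precisely the reason that hypothesis (1) cannot be dropped.

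As an alternative that avoids the inversion step, I could argue by contradiction after the substitution $\mu_i^n := \lambda_i^n - \bar{\lambda}_i$: since $u_i^n \to u_i$ one has $\sum_i \mu_i^n u_i^n \to 0$, and if some $\mu_i^n \not\to 0$ I would normalize by $M_n := \max_i |\mu_i^n|$, extract a convergent subsequence of the bounded vectors $(\mu_i^n/M_n)_i$ whose limit is nonzero, and pass to the limit to obtain a nontrivial vanishing combination of the $u_i$, contradicting their linear independence. This matches the subsequence style used elsewhere in the paper, but the Gram-matrix route above is the more direct one.
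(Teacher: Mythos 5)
Your proof is correct, but it takes a genuinely different route from the paper. You reduce everything to the linear system $G^n\lambda^n=c^n$ built from the Gram matrix and the inner products $\langle y^n\mid u_j^n\rangle$, then pass to the limit using continuity of the inner product, positive definiteness of the limit Gram matrix $G$, and continuity of matrix inversion (via Cramer's rule and $\det G^n\to\det G\neq 0$); the identification $G\bar{\lambda}=c$ closes the argument. The paper instead runs Gram--Schmidt orthogonalization on each tuple $u_1^n,\dots,u_K^n$, shows the orthogonalized vectors $e_i^n$ converge to the orthogonalization $e_i$ of the limit tuple, proves the coefficients $\tilde{\lambda}_i^n$ in the orthogonal basis are bounded (using $\|\sum_i\tilde{\lambda}_i^n e_i^n\|^2=\sum_i(\tilde{\lambda}_i^n)^2\|e_i^n\|^2$) and hence converge, and finally recovers convergence of the original coefficients $\lambda_i^n$ by explicit triangular back-substitution through the Gram--Schmidt formulas. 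Both arguments exploit the same underlying facts (finite-dimensionality of the relevant data, continuity of the inner product, and linear independence of the limit tuple ensuring a nondegenerate limit object), but yours is shorter and delegates the delicate step to a standard fact about inverses of matrices depending continuously on their entries, whereas the paper's is more computational yet avoids determinants and matrix inversion altogether. Your sketched alternative (normalize $\mu_i^n=\lambda_i^n-\bar{\lambda}_i$ by $M_n=\max_i|\mu_i^n|$, extract a convergent subsequence, and contradict linear independence of the $u_i$) is also sound and is arguably the most elementary of the three; any of them would serve the paper's purposes, since the lemma is only invoked to get convergence of multipliers in Proposition \ref{lemma:indices_RCRCQ}.
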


\begin{proof} 
	For any $n\in \mathbb{N}$ let $\{e_i^n\}_{n\in\mathbb{N}}$ $i=1,\dots,K$ be a sequence of orthogonal vectors obtained by the Gram-Schmidt orthogonalization of $u_i^n$ $i=1,\dots,K$, i.e.,
	\begin{align*}
		&e_1^n=u_1^n,\\
		&e_k^n=u_k^n-\sum_{i=1}^{k-1} \frac{\langle u_k^n \ |\ e_i^n \rangle }{\|e_i^n\|^2}e_i^n,\quad k=2,\dots,K
	\end{align*}
	and $e_i$, $i=1,\dots,K$, be orthogonal vectors obtained  by the Gram-Schmidt orthogonalization of $u_i$ $i=1,\dots,K$.
	Since $u_i^n\rightarrow u_i$, $i=1,\dots,K$ we have $e_i^n\rightarrow e_i$, $i=1,\dots,K$.
	
	Let $\tilde{\lambda}_i^n,\tilde{\lambda}_i\in \mathbb{R}$, $i=1,\dots K$, $n\in\mathbb{N}$  be such that $\sum_{i=1}^{K} \lambda_i^n u_i^n=\sum_{i=1}^{K} \tilde{\lambda}_i^n e_i^n$ and $\sum_{i=1}^{K} \bar{\lambda}_i u_i=\sum_{i=1}^{K} \tilde{\lambda}_i e_i$. 
	Since 
	$
	\sum_{i=1}^K ( \lambda_i^n u_i^n-\bar{\lambda}_i u_i)\rightarrow 0
	$
	we have
	\begin{equation}\label{eq:lambdabounded1}
		\sum_{i=1}^K ( \tilde{\lambda}_i^n e_i^n-\tilde{\lambda}_i e_i)\rightarrow 0.
	\end{equation}
	We have
	\begin{equation}\label{eq:lambdabounded2}
		\sum_{i=1}^K ( \tilde{\lambda}_i^n e_i^n-\tilde{\lambda}_i e_i)=
		\sum_{i=1}^K  \tilde{\lambda}_i^n (e_i^n-e_i) + \sum_{i=1}^K (\tilde{\lambda}_i^n  -\tilde{\lambda}_i) e_i
	\end{equation}
	By taking  scalar products with $e_{k}$, $k\in\{1,\dots,K\}$ we obtain
	\begin{equation*}
		\sum_{i=1}^K
		\tilde{\lambda}_i^n \langle e_i^n-e_i \ |\ e_{k} \rangle + (\tilde{\lambda}_k^n  -\tilde{\lambda}_k)\|e_{k}\|^2\rightarrow 0, \quad k=1,\dots,K.
	\end{equation*}
	Now we show that $\tilde{\lambda}_i^n$ are bounded, i.e. $|\tilde{\lambda}_i^n|\leq M$ for $i=1,\dots,K$, $n\in \mathbb{N}$ for some $M\geq 0$. We have
	\begin{equation*}
		\|\sum_{i=1}^K\tilde{\lambda}_i^n e_i^n\|^2= 	\sum_{i=1}^K (\tilde{\lambda}_i^n )^2 \|e_i^n\|^2\geq (\tilde{\lambda}_k^n)^2 \|e_k^n\|^2\quad \text{for all } k\in\{1,\dots,K\}
	\end{equation*}
	If there exists $k \in\{1,\dots,K\}$ such that $|\tilde{\lambda}_n^k|\rightarrow +\infty$, then $\|\sum_{i=1}^K\tilde{\lambda}_i^n e_i^n\|^2\rightarrow +\infty$, which contradicts $\|\sum_{i=1}^K\tilde{\lambda}_i^n e_i^n\|^2\rightarrow\|\sum_{i=1}^K\tilde{\lambda}_i e_i\|^2= \|\sum_{i=1}^K\lambda_i u_i\|^2<+\infty$.

	{Since $\tilde{\lambda}_i^n$ are bounded} and $e_i^n-e_i\rightarrow 0$, $i=1,\dots,K$ from \eqref{eq:lambdabounded1}-\eqref{eq:lambdabounded2} we conclude that $\tilde{\lambda}_k^n\rightarrow \tilde{\lambda}_k$ for any $k\in \{1,2\dots,K\}$.
	
	For each $n\in \mathbb{N}$ we have
	\begin{equation*}
		\sum_{i=1}^{K} \lambda_i^n u_i^n=	\sum_{i=1}^{K} \tilde{\lambda}_i^n e_i^n=\tilde{\lambda}_1^n u_1^n+\sum_{i=2}^{K} \tilde{\lambda}_i^n \left( 
		u_i^n-\sum_{j=1}^{i-1} \frac{\langle u_i^n \ |\ e_j^n \rangle }{\|e_j^n\|^2}e_j^n\right)
	\end{equation*}
	Since $u_i^n$, $i=1,\dots,K$ are linearly independent we have
	\begin{align*}
		&\lambda_K^n u_K^n=\tilde{\lambda}_K^n u_K^n,\\
		&\lambda_{K-1}^n u_{K-1}^n=\tilde{\lambda}_{K-1}^n u_{K-1}^n-\tilde{\lambda}_{K}^n\frac{\langle u_{K}^n \ |\ e_{K-1}^n \rangle }{\|e_{K-1}^n\|^2}u_{K-1}^n,\\
		&\lambda_{K-2}^n u_{K-2}^n=\tilde{\lambda}_{K-2}^n u_{K-2}^n-\tilde{\lambda}_{K-1}^n \frac{\langle u_{K-1}^n \ |\ e_{K-2}^n \rangle }{\|e_{K-2}^n\|^2}u_{K-2}^n\\
		&\hspace{1cm}
		+\tilde{\lambda}_{K}^n\bigg( \frac{\langle u_{K}^n \ | \ e_{K-1}^n \rangle  }{\|e_{K-1}^n\|^2} \frac{\langle u_{K-1}^n \ |\ e_{K-2}^n \rangle  }{{\|e_{K-2}^n\|^2}  } u_{K-2}^n \\
		&\hspace{1cm} - \frac{\langle u_K^n \ |\ e_{K-2}^n \rangle  }{\|e_{K-2}^n\|^2}u_{K-2}^n  \bigg)\\
		&\hspace{5cm}\vdots
	\end{align*}
	Hence,
	\begin{align*}
		&\lambda_K^n=\tilde{\lambda}_K^n,\\
		&\lambda_{K-1}^n=\tilde{\lambda}_{K-1}^n-\tilde{\lambda}_{K}^n\frac{\langle u_{K}^n \ |\ e_{K-1}^n  \rangle}{\|e_{K-1}^n\|^2},\\
		&\lambda_{K-2}^n=\tilde{\lambda}_{K-2}^n- \tilde{\lambda}_{K-1}^n \frac{\langle u_{K-1}^n \ |\ e_{K-2}^n \rangle }{\|e_{K-2}^n\|^2} \\
		&\hspace{1cm} + \tilde{\lambda}_{K}^n\left( \frac{\langle u_{K}^n \ | \ e_{K-1}^n \rangle  }{\|e_{K-1}^n\|^2} \frac{\langle u_{K-1}^n \ |\ e_{K-2}^n \rangle  }{{\|e_{K-2}^n\|^2}  } - \frac{\langle u_K^n \ |\ e_{K-2}^n \rangle  }{\|e_{K-2}^n\|^2}\right)\\
		&\hspace{5cm}\vdots
	\end{align*}
	Since $e_i^n\rightarrow e_i$ , $u_i^n\rightarrow u_i$ and $\tilde{\lambda}_i^n\rightarrow\tilde{\lambda}_i$ for any $i\in \{1,\dots,K\}$  we obtain that $\lambda_i^n $ converges for any $i\in \{1,\dots,K\}$ and $\lambda_i^n\rightarrow \lambda_i$, $i=1,\dots,K$.
\end{proof}
\bibliographystyle{plain}
\bibliography{mybibfile}
\end{document}